\tikzset{middlearrow/.style={
        decoration={markings,
            mark= at position 0.6 with {\arrow{#1}} ,
        },
        postaction={decorate}
    }
}
\newsavebox{\measure@tikzpicture}
  \def\tikz@width{#1}%
\newcommand{\cache}[1]{}
\numberwithin{equation}{section}
\theoremstyle{plain}
\newtheorem{theo}[equation]{Theorem}
\newtheorem{prop}[equation]{Proposition}
\newtheorem{lemm}[equation]{Lemma}
\newtheorem{coro}[equation]{Corollary}
\theoremstyle{definition}
\newtheorem{defi}[equation]{Definition}
\newtheorem{exem}[equation]{Example}
\newtheorem{rema}[equation]{Remark}
\newtheorem*{exem*}{Example}
\newtheorem*{exems*}{Examples}
\newtheorem*{exam*}{Example}
\newtheorem*{exams*}{Examples}
\newtheorem*{rema*}{Remark}
\newtheorem*{remas*}{Remarks}
\theoremstyle{definition}
\newtheorem*{defi*}{Definition}
\newtheorem*{defiprop*}{Definition-Proposition}
\theoremstyle{plain}
\newtheorem*{prop*}{Proposition}
\newtheorem*{lemm*}{Lemma}
\newtheorem*{coro*}{Corollary}
\newtheorem*{theo*}{Theorem}
 \def\cdr@enoncedef{%
 \newenvironment{enonce*}[2][plain]%
 {\let\cdrenonce\relax \theoremstyle{##1}%
 \newtheorem*{cdrenonce}{##2}%
 \begin{cdrenonce}}%
 {\end{cdrenonce}}   }%
\def\cf{{\it cf.\/}\ }
\def\ie{{\it i.e.\/}\ }
\def\truc{\unskip\kern 3pt\penalty 500
\hbox{\vrule\vbox to 5pt{\hrule width 4pt\vfill\hrule}\vrule}\kern
3pt}
\def\vect{\overrightarrow}
\def\N{{\mathbb N}}    
\def\Z{{\mathbb Z}}
\def\R{{\mathbb R}}
\def\A{{\mathbb A}}
\def\M{{\mathbb M}}
\newcommand{\g}[1]{\mathfrak{#1}} 
\def\qa{\alpha}     
\def\qf{\varphi}
 \def\ql{\lambda}
\def\qn{\nu}
\def\qp{\pi}
\def\qr{\rho}
\def\QF{\Phi}
\def\sha{{\mathcal A}}   
\def\shm{{\mathcal M}}
\def\sht{{\mathcal T}}
\def\SHI{{\mathscr I}}
\def\SHT{{\mathscr T}}
\newcommand{\stephaneline}[1]{\todo[inline,color=green!40]{#1}}
\newcommand{\tristan}[1]{\todo{#1}}
\begin{document}

\title{MV Polytopes and Masures}

\author[]{
	Tristan Bozec\thanks{IMAG, Universit\'e Montpellier, Montpellier, France \\
											\href{mailto:tristan.bozec@umontpellier.fr}{tristan.bozec@umontpellier.fr}}, 
	St\'ephane Gaussent\thanks{ICJ, Universit\'e Jean Monnet, Saint-\'Etienne, France \\
											\href{mailto:stephane.gaussent@univ-st-etienne.fr}{stephane.gaussent@univ-st-etienne.fr}}}

\date{}

\maketitle


\begin{abstract} {We realize affine Mirkovi\'c--Vilonen polytopes using Littelmann's path model in the framework of masures. We are also able to read the decorations on the paths in the case of $\widehat{\mathfrak{sl}}_2$.
}
\end{abstract}

\setcounter{tocdepth}{2}    
\tableofcontents

\section*{Introduction}

Given a finite type Lie algebra $\mathfrak g$, Mirkovi\'c--Vilonen and Anderson~\cite{MR1958098} have defined the so-called {\it MV polytopes} as the image by the moment map of cycles in the affine Grassmanniann. They provide a realization of the combinatorial Kashiwara crystals associated with $\mathfrak g$. These polytopes have been recovered in several ways: via preprojective algebras (Baumann--Kamnitzer~\cite{MR2892443}), Poincar\'e--Birkhoff--Witt (PBW) bases of the half quantum group $U_q^+(\mathfrak g)$ (Lusztig, Kamnitzer~\cite{MR2630039}), diagrammatic Khovanov--Lauda--Rouquier (KLR) algebras (Tingley--Webster~\cite{MR3542489}), or through affine buildings and galleries (Ehrig~\cite{E10}). It is interesting to wonder how  these constructions extend to the affine setting. Baumann--Kamnitzer--Tingley~\cite{BKT14} used the point of view of preprojective algebras to naturally define \emph{affine MV polytopes}, recovered later by PBW (resp.\ KLR) methods by Muthiah--Tingley~\cite{MR3874704} (resp.\ Tingley--Webster, \textit{op.\!\! cit.}). The present paper explains how these affine MV polytopes can be constructed using Littelmann's paths adapted to the context of Gaussent--Rousseau masures~\cite{GR08} which generalize buildings. Precisely, we use retractions to obtain the non-decorated polytopes in any affine type, and give a method to recognize the partitions decorating the $\widehat{\mathfrak{sl}}_2$ MV polytopes.

In the first section we recall the notions we need about masures. Then in the second we give a few definitions regarding LS-paths an prove a few combinatorial lemmas about their crystal structure. In the third section we obtain the first main result of this paper, Theorem~\ref{retractdense}, relating these crystal-type data to retractions of paths in masures. We prove this statement by induction using parabolic retractions. In section 4, we introduce affine MV polytopes, in the $\widehat{\mathfrak{sl}}_2$ case, essentially following~\cite{BDKT12}. In \S 4.2, we link this section to the previous ones, stating that we recover the bottom part of the undecorated polytope associated to a path using retractions. We then prove combinatorial and technical lemmas used in the last section to prove our second main result obtained in section 5, Theorem~\ref{decopath}: these decorations, which are partitions, can be recognized on a specific class of paths, by simple examination. The combination of our results allows one to recover these decorations for \emph{any} path in the crystal.

It is of course natural to wonder how to retrieve the decorations defined in~\cite{BKT14} for any affine type, as has been done for instance in the above mentioned~\cite{MR3542489}. We believe that one could achieve this program by using the $\widehat{\mathfrak{sl}}_2$ case as an elementary step since the associated polytopes are the 2-faces of arbitrary affine MV polytopes.
To this end, the notion of zigzag defined in~\ref{defzigzag} should easily be generalized.

\subsection*{Acknowledgements}
The first  author has received funding from the European Research Council (ERC) under the
European Union's Horizon 2020 research and innovation programme (Grant Agreement No.\ 768679). We are thankful to Pierre Baumann for fruitful discussions.

\section{Recollections}
\label{se:Recollection}


\subsection{The vectorial data}
\label{suse:Vectorial}  

Let $\M=(\qa_j(\qa_i^\vee))_{i,j\in I}$ be a Kac-Moody matrix, {\it i.e.} a generalized Cartan matrix, meaning a matrix with non positive integers coefficients, $2$'s on the diagonal and with the symmetry of $0$'s.

We consider a root generating system $(\M,X,Y,(\qa_i)_{i\in I}, (\qa^\vee_i)_{i\in I})$ where $\M$ is Kac-Moody matrix, $X$ and $Y$ are two dual free $\mathbb Z$-modules of finite rank, $I$ a finite set, $(\qa^\vee_i)_{i\in I}$ a family in $Y$ and $(\qa_i)_{i\in I}$ a  family in the dual $X$.
We suppose these families free, \ie the sets  $\{\qa_i\mid i\in I\}$ and $\{\qa^\vee_i\mid i\in I\}$ are linearly independent. Further, we assume that $\M_{i,j} = \qa_j(\qa_i^\vee)$.

Let $V = Y\otimes \mathbb R$, then every element of $X$ defines a linear form on $V$ and the formula $s_i(v)=v-\qa_i(v)\qa_i^\vee$ defines a linear involution in $V$. The subgroup generated by the $s_i$ is $W^v$, the Weyl group of the corresponding Kac-Moody Lie algebra $\g g_\M$ and the associated real root system is
\[
\QF=\{w(\qa_i)\mid w\in W^v,i\in I\}\subset Q=\bigoplus_{i\in I}\,\Z.\qa_i.
\] We consider also the dual action of $W^v$ on $V^*$.

We set $\QF^\pm{}=\QF\cap Q^\pm{}$ where $Q^\pm{}=\pm{}(\bigoplus_{i\in I}\,(\Z_{\geq 0}).\qa_i)$. 
Also $Q^\vee:=\bigoplus_{i\in I}\,\Z.\qa_i^\vee$ and $Q^\vee_\pm{}=\pm{}(\bigoplus_{i\in I}\,(\Z_{\geq 0}).\qa_i^\vee)$. We have  $\QF=\QF^+\cup\QF^-$ and, for $\qa=w(\qa_i)\in\QF$, $s_\qa=ws_iw^{-1}$ and $s_\qa(v)=v-\qa(v)\qa^\vee$, where the coroot $\qa^\vee=w(\qa_i^\vee)$ depends only on $\qa$.

The set  $\QF_{}$ is an (abstract) reduced real root system in the sense of \cite{MP89}, \cite{MP95} or \cite{BP96}. We will use imaginary roots: $\QF_{im}=\QF^+_{im}\sqcup\QF^-_{im}$ with 
 $-\QF^-_{im}=\QF^+_{im}\subset Q^+$,  $W^v-$stable.
The set  $\QF_{all}=\QF_{}\sqcup\QF_{im}$ of all roots has to be an (abstract) root system in the sense of \cite{BP96}.
An example for $\QF_{all}$ is the full set of roots of $\g g_\M$.

The fundamental positive chamber is $C^v_f=\{v\in V\mid\qa_i(v)>0,\forall i\in I\}$. Its closure $\overline{C^v_f}$ is the disjoint union of the vectorial faces $F^v(J)=\{v\in V\mid\qa_i(v)=0,\forall i\in J,\qa_i(v)>0,\forall i\in I\setminus J\}$ for $J\subset I$. We set $V_0 = F^v(I){=V^{W^v}}$.

The positive (resp. negative) vectorial faces are the sets $w.F^v(J)$ (resp. $-w.F^v(J)$) for $w\in W^v$ and $J\subset I$. The support of such a face is the vector space it generates. The set $J$ or the face $w.F^v(J)$ or an element of this face is called spherical if the group $W^v(J)$ generated by $\{s_i\mid i\in J\}$ is finite. An element of a vectorial chamber $\pm w.C^v_f$ is called regular.

The Tits cone  $\sht$ is the (disjoint) union of the positive vectorial faces. It is a $W^v-$stable convex cone in $V$. Actually $W^v$ permutes the vectorial walls $M^v(\qa)=\ker(\qa)$ (for $\qa\in\QF$); it acts simply transitively on the positive (resp. negative) vectorial chambers.


\subsection{The model apartment}
\label{suse:Apart} 
As in \cite[1.4]{R11} the model apartment $\A$ is $V$ considered as an affine space and endowed with a family $\shm$ of walls. These walls are the affine hyperplanes directed by $\ker(\qa)$:
\[
M(\qa,k)=\{v\in V\mid\qa(v)+k=0\}\qquad\text{for }\qa\in\QF_{}\text{ and } k\in\Z.
\]

For $\qa=w(\qa_i)\in\QF$, $k\in \Z$ and $M=M(\qa,k)$, the reflection $s_{\qa,k}=s_M$ with respect to $M$ is the affine involution of $\A$ with fixed points the wall $M$ and associated linear involution $s_\qa$. In equation, this gives for any $x\in\mathbb A$,
\[
s_{\qa,k} (x) = x - (\qa(x) + k)\qa^\vee.
\]

The affine Weyl group $W^a$ is the group generated by the reflections $s_M$ for $M\in \shm$; we assume that $W^a$ stabilizes $\shm$.
We know that $W^a=W^v\ltimes Q^\vee$; here $ Q^\vee$ has to be understood as groups of translations.

An automorphism of $\A$ is an affine bijection $\qf:\A\to\A$ stabilizing the set of pairs $(M,\qa^\vee)$ of a wall $M$ and the coroot $\qa^\vee$ associated with $\qa\in\QF_{}$ such that $M=M(\qa,k)$, $k\in \Z$. 
   We write $\vect\qf:V\to V$ the linear application associated to $\qf$.
   The group $Aut(\A)$ of these automorphisms contains $W^a$ and normalizes it.
We consider also the group $Aut^W_\R(\A)=\{\qf\in Aut(\A)\mid\vect{\qf}\in W^v\}$ of vectorially-Weyl automorphisms.
One has $Aut^W_\R(\A)=W^v\ltimes P^\vee$, where $ P^\vee=\{ v\in V\mid \qa(v)\in \Z, \forall\qa\in\QF \}$.

For $\qa\in{\QF_{all}}$ and $k\in\R$, $D(\qa,k)=\{v\in V\mid\qa(v)+k\geq 0\}$ is an half-space. It is called a half-apartment if $k\in \Z$ and $\qa\in\QF$. We write  $D(\alpha,\infty) = \mathbb A$.

The Tits cone $\mathcal T$ is convex and $W^v-$stable cones, therefore, we can define a $W^v-$invariant preorder relation on $\mathbb A$: 
\[
x\leq y\;\Leftrightarrow\; y-x\in\mathcal T.
\]

\subsection{Faces and sectors} 
\label{suse:Faces}

The faces in $\mathbb A$ are associated to the above systems of walls
and half-apartments. As the set of walls might be dense in $\mathbb A$, seen as a finite dimensional real vector space, the faces are no longer subsets, but filters of subsets of $\mathbb A$. For the definition of that notion and its properties, we refer to \cite{BT72} or \cite{GR08}.

If $F$ is a subset of $\mathbb A$ containing an element $x$ in its closure,
the germ of $F$ in $x$ is the filter $\mathrm{germ}_x(F)$ consisting of all subsets of $\mathbb A$ which contain intersections of $F$ and neighbourhoods of $x$. We say that $x$ is the origin of this germ.
In particular, if $x\neq y\in \mathbb A$, we denote the germ in $x$ of the segment $[x,y]$ by $[x,y)$.
For $y\neq x$, the segment germ $[x,y)$ is called of sign $\pm$ if $y-x\in\pm\sht$.
The segment $[x,y]$ or the segment germ $[x,y)$ is called preordered if $x\leq y$ or $y\leq x$.

Given $F$ a filter of subsets of $\mathbb A$, its enclosure $cl_{\mathbb A}(F)$ (resp. {\it closure} $\overline F$) is the filter made of the subsets of $\mathbb A$ containing an element of $F$ of the shape $\cap_{\alpha\in\QF_{all}}D(\alpha,k_\alpha)$, where $k_\alpha\in \mathbb Z\cup\{\infty\}$ (resp. containing the closure $\overline S$ of some $S\in F$).

\medskip

A local face $F$ in the apartment $\mathbb A$ is associated
 to a point $x\in \mathbb A$, its vertex or origin, and a  vectorial face $F^v{=:\vect F}$ in $V$, its direction. It is defined as $F=germ_x(x+F^v)$.
 Its closure is $germ_x(x+\overline{F^v})$. Its sign is the sign of $F^v$.

There is an order on the local faces: the assertions ``$F$ is a face of $F'$ '',
``$F'$ covers $F$ '' and ``$F\leq F'$ '' are by definition equivalent to
$F\subset\overline{F'}$.
 The dimension of a local face $F$ is the smallest dimension of an affine space generated by some $S\in F$.
  The (unique) such affine space $E$ of minimal dimension is the support of $F$; if $F=germ_x(x+F^v)$, $supp(F)=x+supp(F^v)$.
 A local face is spherical if the direction of its support meets the open Tits cone (\ie if $F^v$ is spherical), then its pointwise stabilizer $W_F$ in $W^a$ is finite.

\medskip
 A local chamber is a maximal local face, \ie a local face $germ_x(x \pm w.C^v_f)$ for $x\in\A$ and $w\in W^v$.
 The fundamental local chamber of sign $\pm$ is $C_0^\pm=germ_0(\pm C^v_f)$.
A (local) panel is a spherical local face maximal among local faces which are not chambers, or, equivalently, a spherical face of dimension $n-1$. Its support is a wall. Sometimes the adjective ``local'' will be dropped out the notation.

\medskip
 A sector in $\mathbb A$ is a $V-$translate $\mathfrak s=x+C^v$ of a vectorial chamber
$C^v=\pm w.C^v_f$, $w \in W^v$. The point $x$ is its {\it base point} and $C^v{=:\vect{\g s}}$ its direction.  Two sectors have the same direction if, and only if, they are conjugate
by $V-$translation, and if, and only if, their intersection contains another sector.

The sector-germ of a sector $\mathfrak s=x+C^v$ in $\mathbb A$ is the filter $\mathfrak S$ of
subsets of~$\mathbb A$ consisting of the sets containing a $V-$translate of $\mathfrak s$, it is well
determined by the direction $C^v{=\vect{\g s}=:\vect{\g S}}$. So, the set of
translation classes of sectors in $\mathbb A$, the set of vectorial chambers in $V$ and
 the set of sector-germs in $\mathbb A$ are in canonical bijection.
  We denote the sector-germ associated to the  fundamental vectorial chamber $\pm C^v_f$ by $\g S_{\pm\infty}$.

 A sector-face in $\mathbb A$ is a $V-$translate $\mathfrak f=x+F^v$ of a vectorial face
$F^v=\pm w.F^v(J)$. The sector-face-germ of $\mathfrak f$ is the filter $\mathfrak F$ of
subsets containing a translate $\mathfrak f'$ of $\mathfrak f$ by an element of $F^v$ ({\it i.e.} $\mathfrak
f'\subset \mathfrak f$). If $F^v$ is spherical, then $\mathfrak f$ and $\mathfrak F$ are also called
spherical. The sign of $\mathfrak f$ and $\mathfrak F$ is the sign of $F^v$.

 \subsection{The masure}
 \label{suse:Masure}

In this section, we recall some properties of the masure as defined in \cite{R16} and in \cite{He21}.

\medskip

An apartment of type $\mathbb A$ is a set $A$ endowed with a set $Isom^W\!(\mathbb A,A)$ of bijections, called Weyl-isomorphisms, such that, if $f_0\in Isom^W\!(\mathbb A,A)$, then $f\in Isom^W\!(\mathbb A,A)$ if, and only if, there exists $w\in W^a$ satisfying $f = f_0\circ w$.
A Weyl-isomorphism between two apartments $\varphi :A\to A'$ is a bijection such that, for any $f\in Isom^W\!(\mathbb A,A)$, $f'\in Isom^W\!(\mathbb A,A')$, we have $f'^{-1}\circ\qf\circ f\in W^a$. The set of these isomorphisms is written $Isom^W(A,A')$. Thanks to these isomorphisms, faces, local faces, sectors, Tits cone... are defined in any apartment of type $\mathbb A$.
 
 \medskip
A masure of type $\mathbb A$ is a set $\SHI$ endowed with a covering $\mathcal A$ of subsets called apartments, each endowed with some structure of an apartment of type $\A$.
 We do not recall here the precise definition, which was simplified by H\'ebert \cite{He21}. We indicate some of its main properties:

\smallskip
\par {\bf a)} If $F$ is a point, a preordered segment,  a local face or a spherical sector face in an apartment $A$ and if $A'$ is another apartment containing $F$, then $A\cap A'$ contains the
enclosure $cl_A(F)$ of $F$ and there exists a Weyl-isomorphism from $A$ onto $A'$ fixing $cl_A(F)$.

A filter or subset in $\SHI$ is called a preordered segment, a preordered segment germ, a local face, a spherical sector face or a spherical sector face germ if it is included in some apartment $A$ and is called like that in $A$.

\smallskip
\par {\bf b)} If $\mathfrak F$ is  the germ of a spherical sector face and if $F$ is a face or a germ of a spherical sector face, then there exists an apartment that contains $\mathfrak F$ and $F$.

\smallskip
\par {\bf c)}  If two apartments $A,A'$ contain $\mathfrak F$ and $F$ as in {\bf b)}, then their intersection contains $cl_A(\mathfrak F\cup F)$ and there exists a Weyl-isomorphism from $A$ onto $A'$ fixing $cl_A(\mathfrak F\cup F)$.

\smallskip
\par {\bf d)} We consider the relation $\le$ on $\SHI$ defined as follows:
\[
x\le y \iff \exists A\in\sha \text{ such that }x,y\in A\text{ and } x\le_A y 
\]
then $\le$ is a well defined preorder, in particular transitive, that extends the preorder on $\A$ given by the Tits cone.


 \subsection{The group}
 \label{suse:Group}

Set $\mathscr K = \mathbb C(\!(t)\!)$ and denote by $\mathscr O = \mathbb C[\![t]\!]$ its ring of integers and by $val$ the associated discrete valuation.
Let $G = G(\mathscr K)$ be a split Kac-Moody group over $\mathscr K$, with Lie algebra $\g g_\M$, introduced in \ref{suse:Vectorial}. We denote by $T = T(\mathscr K)$ the maximal torus such that the $\mathbb Z$-lattice of coweights $Hom (\mathcal K^*, T)$ is $Y$, and the dual $\mathbb Z$-lattice of weights $Hom (T,\mathcal K^*)$ is $X$. Let $B = B^+ = B(\mathscr K)$ be the Borel subgroup associated to the choice of simple roots made in \ref{suse:Vectorial} and let $B^-$ the opposite Borel subgroup in $G$.
The set of real roots of $(G,T)$ is $\Phi$ and the $\mathbb Z$-lattice of coroots is $Q^\vee$. Finally the Weyl group of $(G,T)$ identifies with $W^v$.

To each real root $\alpha$ corresponds a subgroup $U_\alpha = U_\alpha (\mathscr K)$ isomorphic to $(\mathscr K, +)$, via $x_\alpha : \mathscr K\to U_\alpha$. Then the Borel subgroups decomposes as $B^\pm = TU^\pm$, for $U^\pm$ generated by the subgroups $U_\alpha$ in $G$, for $\alpha\in\Phi^\pm$. And to any $k\in\mathbb Z$, one defines $U_{\alpha,k} = x_\alpha(\{u\in\mathscr K, val (u)\geqslant k\})$, a subgroup of $U_\alpha$.

\medskip
We consider now the masure $\SHI = \SHI (G,\mathscr K)$ as defined in \cite{R16}. It satisfies the properties listed in \ref{suse:Masure}. And even better, the group $G$ acts upon it in a such a way that all apartments are given by this action, for any $A\in\mathcal A$, there exists $g\in G$, $A = g\cdot \mathbb A$. Further, one has the following properties for some fixators (pointwise stabilizers):
\begin{itemize}
\item[$-$] Fix$_G(\mathbb A) = T(\mathscr O)$;
\item[$-$] Fix$_G(D(\alpha,k)) = T(\mathscr O)U_{\alpha,k}$ and all the apartments containing $D(\alpha,k)$ are given by the action of an element of the form $x_\alpha (u)$, $val (u)\geqslant k$;
\item[$-$] Fix$_G(\g S_{\pm\infty}) = T(\mathscr O)U^\pm$;
\item[$-$] Fix$_G(\{0\}) = G(\mathscr O)$.
\end{itemize}
\noindent For any subset or filter of subsets $\Omega$ of $\SHI$, we denote the fixator of $\Omega$ by $G_\Omega$, in the case of $\Omega = \{x\}$, we just write $G_x$.

\medskip
If $N$ is the stabilizer of $\A$ in $G$, there exists an homomorphism $\qn:N\to Aut(\A)$ such that the image group $W_Y= W^v\rtimes Y$ permutes the walls, local faces, sectors, sector-faces... and contains the affine Weyl group $W^a=W^v\ltimes Q^\vee$ \cite[4.13.1]{R13}. 
The group $T$ acts by translations as follows: for $t\in T$, $\nu(t)$ is defined by $\chi(\nu(t)) = - val \chi(t)$, for all $\chi\in X$. For example, an element $\mu(t)\in T$, for $\mu\in Q^\vee$ will act by the translation by $-\mu$.

 \subsection{The local behaviour of the action}\label{suse:Local}

Let $x$ be a point in the standard apartment $\mathbb A$. Let $\Phi_x$ be the set of all roots $\alpha$ such that $\alpha (x)\in \Z$. It is a closed subsystem of roots. Its associated Weyl group $W^v_x$ is a Coxeter group.

We have twinned buildings $\SHI^+_x$ and $\SHI^-_x$ whose elements are segment germs $[x,y)$ for $y\in\SHI$, $y\not=x$, $y\geq{}x$ and $y\leq{}x$, respectively.
 We consider their unrestricted structure, so the associated Weyl group is $W^v$ and the chambers (resp. closed chambers) are the local chambers $C=germ_x(x+C^v)$ (resp. local closed chambers $\overline{C}=germ_x(x+\overline{C^v})$), where $C^v$ is a vectorial chamber, \cf \cite[4.5]{GR08} or \cite[{\S{}} 5]{R11}. 

To $\A$ is associated a twin system of apartments $\A_x = (\A_x^-,\A_x^+)$. So that the walls and half-apartments of $\mathbb A$ give walls and half-apartments in the twinned apartment $\A_x = (\A_x^-,\A_x^+)$.


Let $\SHI_x$ be the union of $\SHI^+_x$ and $\SHI^-_x$ in $\SHI$.
The group $\overline G_x = G_x/G_{\SHI_x}$ acts on $\SHI_x^+$ and $\SHI_x^-$.
For any root $\alpha\in \Phi_x$ with $\alpha(x) = k\in\Z$, the group $\overline U_\alpha = U_{\alpha, k}/U_{\alpha, k +1}$ is a subgroup of $\overline G_x$ that can be identified set-theoretically with the elements $x_\alpha (at^k)$, with $a\in\mathbb C$. Furthermore, in the twinned buildings $\SHI^+_x$ and $\SHI^-_x$, elements of the form $x_\alpha (at^k), \ a\in\mathbb C$ acts transitively on the apartments containing the half-apartment given by $D(\alpha,k)$. See \cite{GR14}, Section 4.1, for more details.

\subsection{The paths}
\label{suse:Paths}

The elements of $Y$, through the identification $Y=N.0$, are called vertices of type $0$ in $\A$.

We consider piecewise linear continuous paths
$\pi:[0,1]\rightarrow \mathbb A$ such that each (existing) tangent vector $\pi'(t)$
belongs to an orbit $W^v.\lambda$ for some $\lambda\in {\overline{C^v_f}}$, and such that the endings of the path $\pi(0)$ and $\pi(1)$ are vertices of type $0$. Such a path is called a {\it $\lambda-$path}; it is
increasing with respect to the preorder relation $\leq$ on $\mathbb A$.

For any $t\neq 0$ (resp. $t \neq1$), we let $\pi'_-(t)$ (resp. $\pi'_+(t)$) denote the derivative of $\pi$ at $t$ from the left (resp. from the right). These derivatives are identified with elements of the twinned apartments $(\A_x^-,\A_x^+)$. For $x = \pi(t)$, we will also identify them with $-\pi(t -\varepsilon)$ (resp. $\pi(t +\varepsilon)$), for a small $\varepsilon >0$.


Let $\pi$ be a $\lambda-$path in $\mathbb A$. A wall $M$ is left positively by $\bar\pi$ with respect to $w\cdot \g S_{-\infty}$ if there exists $t\in [0,1]$, such that $\bar\pi(t) \in M$ and $M$ separates $\pi'_-(t)$ and $w\cdot\g S_{-\infty}$. This condition means that there exists a $V-$translate of $-w\cdot C_f^v$ separated from $\pi'_-(t)$ by $M$.

For any $\beta\in\Phi^-$, consider 
\[
\hbox{pos}_\beta^w (\pi) = \#\{M = M_{\beta,k}, M \hbox{ is left positively by } \bar\pi \hbox{ with respect to }w\cdot\g S_{-\infty}\}.
\] 
Further, set 
\[
\hbox{ddim}^w (\pi) = \sum_{\beta\in\Phi^-} \hbox{pos}_\beta^w (\pi).
\]
 Thanks to this statistic on paths, we define a Hecke path of shape $\ql$ with respect to $w\g S_{-\infty}$ to be a path $\pi$ such that $\hbox{ddim}^w (\pi) \leqslant \rho (\lambda -\mu)$, for $\mu = \pi (1) - \pi(0)$.
Finally, we say that $\pi$ is a LS path of shape $\ql$ with respect to $w\g S_{-\infty}$ if there is equality: $ \hbox{ddim}^w (\pi) = \rho (\lambda -\mu)$, for $\mu = \pi (1) - \pi(0)$. 

As shown in \cite{GR08}, Section 5.3, this definition of LS paths is equivalent to the Littelmann's one for $w=$ id, and we recover the classical notion of the LS paths.

\section{Sections}

In this section, we translate the definitions of~\cite[\S 5]{E10} into the world of paths.

\begin{defi}

Let $\pi$ be an LS path of shape $\lambda$ and let us fix a real positive root $\alpha$, not necessarily simple. An interval $[t_i,t_j]\subset [0,1]$ is 
\begin{itemize}
\item an $\alpha$-zero section at $m\in\mathbb Z$ if
\[
\alpha(\pi(t_i)) = m = \alpha(\pi(t_j)),\quad \forall t\in ]t_i,t_j[, \ \alpha(\pi(t)) = m;
\]
\item an $\alpha$-stable section at $m\in\mathbb Z$ if 
\[
\alpha(\pi(t_i)) = m = \alpha(\pi(t_j)),\quad \forall t\in ]t_i,t_j[, \ \alpha(\pi(t)) > m
\] 
and $[t_i,t_j]$ is maximal with respect to the existence of such an $m$;
\item an $\alpha$-directed section at $m\in\mathbb Z$ if 
\[
\alpha(\pi(t_i)) = m, \quad \alpha(\pi(t_j)) = m+1\quad\hbox{and} \quad \forall t\in ]t_i,t_j[, \ m<\alpha(\pi(t)) < m + 1;
\]
\item an $-\alpha$-directed section at $m\in\mathbb Z$ if 
\[
\alpha(\pi(t_i)) = m, \quad \alpha(\pi(t_j)) = m-1\quad\hbox{and} \quad \forall t\in ]t_i,t_j[, \ m-1<\alpha(\pi(t)) < m.
\]
\end{itemize}
\end{defi}

\medskip



\begin{prop}\label{partsec}
To any LS path $\pi$ and any real positive root $\alpha$, there exists a unique partition $0<t_1 <\cdots < t_\ell < 1$ of $[0,1]$ such that each interval $[t_{i},t_{i+1}]$ is either an $\alpha$-zero, $\alpha$-stable, $\alpha$-directed or $-\alpha$-directed section.
\end{prop}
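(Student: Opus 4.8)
The plan is to pass to the piecewise linear continuous function $h\colon[0,1]\to\R$, $h(t)=\alpha(\pi(t))$. As $\pi(0),\pi(1)$ are vertices of type $0$, hence lie in $Y$, and $\alpha\in Q\subset X$, we get $h(0),h(1)\in\Z$; moreover $h$ has finitely many affine pieces, each of which is constant or injective. The four kinds of section become conditions on $h$ over a subinterval $[t_i,t_j]$: $h$ constant, equal to an integer (zero); $h(t_i)=h(t_j)=m$, $h>m$ on $]t_i,t_j[$, and $[t_i,t_j]$ maximal for the existence of such an $m$ (stable); $h(t_i)=m$, $h(t_j)=m+1$, $m<h<m+1$ on $]t_i,t_j[$, or the same with $m-1$ (directed of the appropriate sign). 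The structural input I will rely on — and where the hypothesis that $\pi$ is an \emph{LS} path really enters — is the following property $(\star)$ of $h$: whenever $u<v$ with $h(u)=h(v)=n\in\Z$ and $n\notin h(]u,v[)$, one has $n<h(t)<n+1$ for all $t\in\,]u,v[$. In other words, every ``return excursion'' of $h$ off an integer level $n$ is upward and confined to the open band $]n,n+1[$. Establishing $(\star)$ is what I expect to be the real work; it should follow from the folding conditions defining Hecke — hence LS — paths (\cite[\S 5]{GR08}), by examining, at a time where $\pi$ meets a wall $M(\alpha,k)$ (i.e.\ where $h(t)\in\Z$), the constraints on the admissible transition $\pi'_-(t)\mapsto\pi'_+(t)$.

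Granting $(\star)$, the partition is built as follows. Let $\mathcal Z$ be the set of maximal subintervals on which $h$ is constant and integral, and $\mathcal S$ the set of maximal subintervals $[u,v]$ with $h(u)=h(v)=m\in\Z$ and $h>m$ on $]u,v[$; by $(\star)$ each member of $\mathcal S$ has $h$ strictly inside one band on its interior. A short argument using $(\star)$ shows that the members of $\mathcal Z\cup\mathcal S$ are pairwise disjoint (two overlapping stable intervals are nested, and then yield an integer value inside an open band; the constant and mixed cases are similar), and every member has integral endpoint values; these are precisely the zero and stable sections. Their complement in $[0,1]$ is a finite union of open gaps $]c,d[$ with $h(c),h(d)\in\Z$. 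On such a gap $h$ exhibits no constant integral stretch, no stable-type excursion, and — by $(\star)$ — no downward excursion; hence $h$ meets each integer level there at most once and transversally, so it crosses the successive integer bands one at a time, each crossing being an $\alpha$- or $-\alpha$-directed section. Taking the (finitely many) endpoints of the members of $\mathcal Z\cup\mathcal S$ together with these within-gap crossing times yields a partition $0<t_1<\dots<t_\ell<1$ of the required form.

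For uniqueness, let $0<t_1<\dots<t_{\ell'}<1$ be any partition into sections of the four types; every $t_i$ has $h(t_i)\in\Z$. A stable section occurring in it is, by definition, maximal for the existence of its integer, hence a member of $\mathcal S$; likewise, treating zero sections as maximal (equivalently, merging adjacent ones), the zero sections are the members of $\mathcal Z$. By $(\star)$ no member of $\mathcal S$ can be subdivided into sections (it attains no integer other than $m$ on its interior, so no directed section fits inside), and $\mathcal Z$-members cannot either; so the zero and stable sections of this partition are exactly the elements of $\mathcal Z\cup\mathcal S$, and the remaining intervals cover precisely the gaps and must realise their forced directed tiling. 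Hence this partition coincides with the one above. The one genuinely nontrivial ingredient is thus $(\star)$; the remainder is elementary, if somewhat case-heavy, analysis of the piecewise linear function $h$.
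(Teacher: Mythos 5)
Property $(\star)$, on which your whole argument hinges, is false, and this is a genuine gap rather than a fixable imprecision. The problematic part is the claim that every return excursion of $h=\alpha\circ\pi$ off an integer level is \emph{upward}. Already for $\mathfrak{sl}_2$ and $\lambda$ with $\alpha(\lambda)=n\geq 2$, the LS path $f_\alpha\pi_\lambda$ has $h$ going linearly from $0$ down to $-1$ and then linearly up to $n-2\geq 0$. Taking $u=0$ and $v$ the first positive time with $h(v)=0$, we have $h(u)=h(v)=0$ and $0\notin h(\,]u,v[\,)$, yet $h<0$ throughout $]u,v[$ (indeed $\min h=-1$ there). So $(\star)$ fails at level $n=0$, and you would not succeed in deriving it from the Hecke/LS folding conditions: downward excursions of $h$ off an integer level are perfectly legitimate for LS paths. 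The structural fact the paper's proof actually (and silently) relies on is strictly weaker: the local \emph{minima} of $h$ on $]0,1[$ occur at integer levels. A downward dip of $h$ below level $n$ is then allowed; it is simply not tiled by a stable section but by a $-\alpha$-directed section down to $n-1$ followed (perhaps after further dips) by an $\alpha$-directed section back up.

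Once $(\star)$ goes, the rest of your construction is not justified: the pairwise disjointness of the members of $\mathcal Z\cup\mathcal S$, the assertion that on the gaps $h$ crosses each band monotonically and transversally, and the uniqueness argument (which needs that a stable interval admits no integer value, and in particular no zero or directed subsection, in its interior) all invoke $(\star)$ in an essential way. The corrected lemma is too weak to run your ``global'' argument in one shot: once downward dips are possible, the level against which stable sections are tested changes as one moves along the path, and a static description via return excursions does not see that. This is precisely why the paper builds the partition greedily from $t=0$: at each stage it reads the sign of $\alpha(\pi'_+(t_i))$ and takes $t_{i+1}$ to be either the next return to the current level (a stable section) or the first hit of the adjacent integer level (a directed section), the local-minima-at-integers property guaranteeing that the resulting interval really is a section. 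That greedy construction adapts automatically to the changing base level, which your $\mathcal Z\cup\mathcal S$ scheme does not.
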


\begin{proof}
Indeed, start from $0 = \pi(0)$, and pick the first $t_1\in[0,1]$, such that the positive tangent vector $\alpha(\pi'_+(t_1)) \ne 0$. If such a $t$ does not exist, the whole path lies in the wall $\ker(\alpha)$ and is an $\alpha$-zero section. If $\alpha(\pi'_+(t_1)) > 0$ and if there exists a $t_2>t_1$ such that $\alpha(\pi(t_2)) = 0$, then for the smallest one, we have a $\alpha$-stable section. If the path does not come back to the level $\alpha = 0$, there must be a $t_2$ such that $\alpha(\pi(t_2)) = 1$, so for the smallest one, we have a $\alpha$-directed section.

If $\alpha(\pi'_+(t_1)) < 0$, let $t_2>t_1$ be the smallest possible such that $\alpha(\pi(t_2)) = -1$, then $[t_1,t_2]$ is a $-\alpha$-directed section. Then one start all over again with $t_2$. 
\end{proof}

\begin{rema}
Note that a $(-\alpha)$-directed section can not occur after an $\alpha$-directed one.
\end{rema}

\medskip

Sections are compatible with the Littelmann's operators. Let $Q = \min\{\alpha(\pi(t)), t\in[0,1]\}$, as $\pi$ is an LS path, this minimum is an integer. Now fix $q = \min\{t\in [0,1], \alpha(\pi(t)) = Q \}$ and $p = \max\{t\in [0,1], \alpha(\pi(t)) = Q \}$.

If $Q = 0$ one set $e_\alpha \pi = 0$, whereas if $Q\leqslant -1$, let $y\in[0,1]$ such that  $\alpha(\pi(y)) = Q+1$ and for $y<t<q$, $Q<\alpha(\pi(t))<Q+1$. Then one cut the path $\pi$ into three pieces: $\pi = \pi_1 * \pi_2 *\pi_3$, with $\pi_1(t) = \pi(ty)$, $\pi_2 (t) = \pi(y + t(q-y)) - \pi(y)$ and $\pi_3 (t) = \pi(q-t(1-q)) - \pi(q)$. And $e_\alpha \pi = \pi_1 * s_\alpha(\pi_2) *\pi_3$

For $f_\alpha$, the definition is analogous. If $\alpha(\pi(1)) - Q = 0$, one set $f_\alpha \pi = 0$, but if $\alpha(\pi(1)) - Q\geqslant 1$, let $x\in[p,1]$ such that  $\alpha(\pi(x)) = Q+1$ and for $p<t<x$, $Q<\alpha(\pi(t))<Q+1$. Then one cut the path $\pi$ into three pieces: $\pi = \pi_1 * \pi_2 *\pi_3$, with $\pi_1(t) = \pi(tp)$, $\pi_2 (t) = \pi(p + t(x-p)) - \pi(p)$ and $\pi_3 (t) = \pi(x-t(1-x)) - \pi(x)$. And $f_\alpha \pi = \pi_1 * s_\alpha(\pi_2) *\pi_3$.

By definition, $[y,q]$ is a $-\alpha$-directed section for $\pi$ and an $\alpha$-directed one for $e_\alpha \pi$. Likewise, $[p,x]$ is a $\alpha$-directed section for $\pi$ and an $-\alpha$-directed one for $f_\alpha \pi$. So the partition of $\pi$ is preserved by the operators.

\medskip

Next let define the flip $(\pi)_{-\alpha}$ of $\pi$ with respect to $\alpha$. Cut $\pi$ according to the partition: $\pi = \pi_0*\pi_1 * \cdots * \pi_\ell$, where, for $0\leqslant i\leqslant \ell$, $\pi_i (t) = \pi(t_i - t(t_{i+1} - t_i)) - \pi(t_i)$. Then $(\pi)_{-\alpha}$ is obtained by concatenating the $\pi_i$ if $[t_i,t_{i+1}]$ is not stable and the $s_\alpha(\pi_i)$ if $[t_i,t_{i+1}]$ is stable. The condition of being LS is not preserved by this operation. 

\begin{prop}\label{propsec}
For any LS path $\pi$ and any real positive root $\alpha$, $(e_\alpha^{\max} \pi)_{-\alpha} = s_\alpha f_\alpha^{\max} \pi$.
\end{prop}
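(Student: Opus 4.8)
The plan is to analyze how the partition of Proposition~\ref{partsec} behaves under $e_\alpha^{\max}$ and then to track what the flip $(\cdot)_{-\alpha}$ does to each section, comparing with the effect of $s_\alpha f_\alpha^{\max}$. First I would recall the key structural fact already established: applying $e_\alpha$ converts the first $-\alpha$-directed section (the one witnessing $Q\le -1$, i.e.\ the piece $[y,q]$) into an $\alpha$-directed section, while leaving the combinatorial type of all other pieces in the partition intact; dually $f_\alpha$ converts the last $\alpha$-directed section $[p,x]$ into a $-\alpha$-directed one. So after $e_\alpha^{\max}\pi$ there are no $-\alpha$-directed sections at all — only $\alpha$-zero, $\alpha$-stable and $\alpha$-directed ones, all lying weakly above the level $\alpha=Q'$ where now $Q'=\alpha((e_\alpha^{\max}\pi)(0))=\alpha(\pi(0))$ is the minimum (for $e_\alpha^{\max}\pi$ the path never dips below its starting level, by maximality of the power of $e_\alpha$). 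Symmetrically, $f_\alpha^{\max}\pi$ has no $\alpha$-directed sections.

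Next I would compute both sides of the claimed identity piece by piece along the common partition (the partition is the \emph{same} set $\{t_1,\dots,t_\ell\}$ for $\pi$ and for $e_\alpha^{\max}\pi$, since only section \emph{labels} change, not the breakpoints). On the right-hand side: $f_\alpha^{\max}\pi$ turns each $\alpha$-directed section of $\pi$ into a $-\alpha$-directed one and leaves stable/zero sections as they are; then $s_\alpha$ reflects the whole path across $\ker\alpha$, which globally swaps the roles of $\alpha$-directed and $-\alpha$-directed sections and fixes $\alpha$-zero sections while sending an $\alpha$-stable section at level $m$ to an $\alpha$-stable section at level $-m$ with each constituent piece reflected by $s_\alpha$. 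On the left-hand side: $e_\alpha^{\max}\pi$ has already flipped all $-\alpha$-directed sections up to $\alpha$-directed ones; then the flip $(\cdot)_{-\alpha}$ replaces each \emph{stable} piece $\pi_i$ by $s_\alpha(\pi_i)$ and leaves every non-stable piece unchanged. The goal is to see that these two recipes yield the same concatenation. The natural way to do this is to introduce the string datum $(\epsilon_1,\dots,\epsilon_\ell)$ recording the type of each section of $\pi$ together with its level $m_i$, express both sides as explicit transformations of this datum, and check equality entry by entry: for a directed or $-\alpha$-directed piece both sides apply a single $s_\alpha$ to it (LHS: via $e_\alpha^{\max}$ for the $-\alpha$-pieces, via the identity for the $\alpha$-pieces — wait, one must be careful here), for an $\alpha$-zero piece both sides leave it alone, and for an $\alpha$-stable piece both sides apply $s_\alpha$ to it.

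The delicate point — and I expect this to be the main obstacle — is the bookkeeping of the \emph{base points} of the pieces and the compatibility of the concatenations, i.e.\ checking that after all these local reflections the translated pieces $\pi_i$ still glue up continuously into the \emph{same} path on both sides, not merely into paths with the same ordered list of directions. Here the reparametrization conventions in the definitions of $e_\alpha$, $f_\alpha$ and of the flip (each piece $\pi_i(t)=\pi(t_i - t(t_{i+1}-t_i))-\pi(t_i)$, etc.) must be reconciled, and one has to verify that iterating $e_\alpha$ (resp.\ $f_\alpha$) to the maximal power really does nothing more on the intermediate sections than relabel the single extreme directed section at each step — this uses that between two applications the new minimum set moves to the next directed section in turn, so that $e_\alpha^{\max}$ amounts to reflecting every $-\alpha$-directed piece exactly once. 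Once this is in place, a direct comparison of the two explicit piecewise formulas, together with the observation that $s_\alpha$ is an involution and commutes appropriately with the translations defining the pieces, finishes the proof; I would also double-check the endpoint normalization (that $e_\alpha^{\max}\pi$ and $s_\alpha f_\alpha^{\max}\pi$ have the same starting vertex of type $0$) as a sanity check, since both manipulations are compatible with the weight lattice $Y$.
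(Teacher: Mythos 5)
Your proposal is correct and follows the same piece-by-piece bookkeeping as the paper's (terse) proof: compare the transformation of each section type of $\pi$ under $(e_\alpha^{\max}\,\cdot)_{-\alpha}$ versus $s_\alpha f_\alpha^{\max}$, using that $e_\alpha$ and $f_\alpha$ preserve the partition. The spot where you wrote ``wait, one must be careful here'' resolves itself cleanly: an $\alpha$-directed piece is untouched on the left (neither $e_\alpha^{\max}$ nor the flip acts on it), while on the right it is reflected once by $f_\alpha^{\max}$ and once more by $s_\alpha$, hence again unchanged since $s_\alpha$ is an involution; a $-\alpha$-directed piece is reflected exactly once on each side; stable pieces are reflected once on each side; zero pieces are fixed by $s_\alpha$ since they lie in $\ker\alpha$ after the base-point shift.
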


\begin{proof}
The operation $f_\alpha^{\max} \pi$ turns all the $\alpha$-directed sections into $-\alpha$ ones such that this path has only zero, stable and $-\alpha$-directed sections. Then by taking the reflection $s_\alpha$, we get back the $\alpha$-directed sections and the stables are also reflected. This is exactly what the operation $(e_\alpha^{\max} \pi)_{-\alpha}$ does.

\end{proof}

Now, let $w = s_{i_1}\cdots s_{i_n}$ a reduced expression of an element $w$ in the Weyl group. Let define two sequences of paths. First, let $\Theta_0 = \pi$ a LS path of shape $\lambda$ with $\pi(0)=0$. And, for $1\leqslant k\leqslant n$, set 
\[\Theta_k = f_{\alpha_{i_k}}^{\max}   (\Theta_{k-1}) = f_{\alpha_{i_k}}^{\max} f_{\alpha_{i_{k-1}}}^{\max} \cdots f_{\alpha_{i_1}}^{\max} (\pi).
\]
 Second, let $\Upsilon_0 (\pi) = \pi$ and set, for $1\leqslant k\leqslant n$, set 
\[
\Upsilon_{k} (\pi) = w(k) f_{\alpha_{i_k}}^{\max} (w(k-1)^{-1} \Upsilon_{k-1}(\pi)),
\]
 where $w(k) = s_{i_1}\cdots s_{i_k}$. A result of Kashiwara, in Section 7.2 of~\cite{Ka94}, on crystals implies that these paths do not depend on the reduced expression of $w$. So we set
\[
\Upsilon_w (\pi) = w\Theta_n = w f_w^{\max} (\pi),
\]
where $f_w^{\max} =  f_{\alpha_{i_n}}^{\max} f_{\alpha_{i_{n-1}}}^{\max} \cdots f_{\alpha_{i_1}}^{\max}$. One can note that $\Theta_k$ is LS for any $k$ and that $\Upsilon_w(\pi)$ is LS with respect to $w \g S_{-\infty}$. So if $s_\alpha$, for a simple root $\alpha$, is such that $\ell (ws_\alpha) > \ell (w)$ then $\Upsilon_{ws_\alpha} (\pi) = \Upsilon _{s_{w(\alpha)}} (\Upsilon_w (\pi))$.

\medskip
Consider now $\Upsilon'_w (\pi) = \Upsilon_w (\pi) + (\pi(1) - \Upsilon_w (\pi)(1))$, for any $w\in W^v$, the path $\Upsilon_{ws_\alpha} (\pi)$ translated by the vector $\pi(1) - \Upsilon_w (\pi)(1)$.

\begin{prop}\label{propConsecDiff}
Let $w\in W^v$ and let $w = s_{i_1}\cdots s_{i_n}$ be a reduced expression. Set $\beta_n^\vee = w(n-1) (\alpha_{i_n}^\vee)$. Then 
\[
\Upsilon'_{ws_{i_n}} (\pi) (0) - \Upsilon'_w (\pi)(0) = \varepsilon_{\alpha_{i_n}}(\Theta_{n-1})\beta_n^\vee,
\]
 where $\varepsilon_{\alpha_{i_n}}$ applied to an LS path is the number of times one can apply the root operator $e_{\alpha_{i_n}}$ to that path.
\end{prop}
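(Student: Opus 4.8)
The plan is to reduce the statement to a computation of path endpoints and then to a one-line manipulation involving the reflection $s_{i_n}$.

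First I would record that every $\Theta_k$ starts at the origin: this holds for $\Theta_0=\pi$ by assumption, and applying a root operator $f_\alpha$ does not move the starting point of a path (the first piece $\pi_1$ in the definition of $f_\alpha$ starts at $\pi(0)$), so $\Theta_k(0)=0$ for all $k$ by induction, whence $\Upsilon_w(\pi)(0)=w\,\Theta_n(0)=0$ for every $w\in W^v$. Therefore $\Upsilon'_w(\pi)(0)=\Upsilon_w(\pi)(0)+\pi(1)-\Upsilon_w(\pi)(1)=\pi(1)-\Upsilon_w(\pi)(1)$, and likewise for $ws_{i_n}$, so that
\[
\Upsilon'_{ws_{i_n}}(\pi)(0)-\Upsilon'_w(\pi)(0)=\Upsilon_w(\pi)(1)-\Upsilon_{ws_{i_n}}(\pi)(1).
\]
Now $\Upsilon_w(\pi)=w\,\Theta_n$ by construction; and since $s_{i_1}\cdots s_{i_{n-1}}$ is a reduced expression for $ws_{i_n}$, the independence of $\Upsilon$ from the chosen reduced expression (recalled above) lets one evaluate $\Upsilon_{ws_{i_n}}(\pi)$ along this truncated word, giving $\Upsilon_{ws_{i_n}}(\pi)=w(n-1)\,\Theta_{n-1}$. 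Hence the left-hand side of the proposition equals $w(n)\,\Theta_n(1)-w(n-1)\,\Theta_{n-1}(1)$.

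Next I would compare $\Theta_n(1)=f_{\alpha_{i_n}}^{\max}(\Theta_{n-1})(1)$ with $\Theta_{n-1}(1)$, using the elementary fact that each application of $f_\alpha$ shifts the endpoint of a path by $-\alpha^\vee$: in the notation of the definition of $f_\alpha$, the middle piece $\pi_2$ runs from level $\alpha=Q$ to level $\alpha=Q+1$, so $\alpha(\pi_2(1))=1$ and $s_\alpha(\pi_2(1))=\pi_2(1)-\alpha^\vee$, while the two outer pieces are left untouched; summing the three endpoints gives $(f_\alpha\pi)(1)=\pi(1)-\alpha^\vee$. Iterating,
\[
\Theta_n(1)=\Theta_{n-1}(1)-\varphi\,\alpha_{i_n}^\vee,
\]
where $\varphi$ is the number of times $f_{\alpha_{i_n}}$ can be applied to $\Theta_{n-1}$ and where, $\Theta_{n-1}$ being an LS path, $\varphi-\varepsilon_{\alpha_{i_n}}(\Theta_{n-1})=\alpha_{i_n}\bigl(\Theta_{n-1}(1)\bigr)$ by the usual $\alpha$-string identity.

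It then remains to substitute and simplify. Writing $v=w(n-1)$, $\alpha=\alpha_{i_n}$, $s=s_\alpha$ and $\eta=\Theta_{n-1}$, so that $w(n)=vs$ and $\beta_n^\vee=v(\alpha^\vee)$, one gets
\[
w(n)\,\Theta_n(1)-w(n-1)\,\Theta_{n-1}(1)=vs\bigl(\eta(1)-\varphi\,\alpha^\vee\bigr)-v\,\eta(1)=v\bigl(s\,\eta(1)-\eta(1)\bigr)-\varphi\,v\bigl(s(\alpha^\vee)\bigr).
\]
Using $s(\alpha^\vee)=-\alpha^\vee$ and $s\,\eta(1)-\eta(1)=s_\alpha(\eta(1))-\eta(1)=-\alpha(\eta(1))\,\alpha^\vee$, the right-hand side collapses to $\bigl(\varphi-\alpha(\eta(1))\bigr)\,v(\alpha^\vee)=\varepsilon_{\alpha_{i_n}}(\Theta_{n-1})\,\beta_n^\vee$, which is the asserted formula. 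The computation itself is short; the only delicate points are the appeal to the independence of the reduced expression in order to truncate the word, and fixing the two normalizations used above — that $f_\alpha$ moves the endpoint by exactly $-\alpha^\vee$, and the string identity $\varphi_\alpha-\varepsilon_\alpha=\alpha(\pi(1))$ — consistently with the conventions of \S\ref{suse:Paths}; that is where a sign error would creep in, if anywhere.
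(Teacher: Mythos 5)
Your proof is correct. The opening reduction is identical to the paper's: both of you first observe $\Upsilon_w(\pi)(0)=0$ for all $w$ and hence rewrite the left-hand side as $\Upsilon_w(\pi)(1)-\Upsilon_{ws_{i_n}}(\pi)(1)$, then use independence of the reduced expression to write $\Upsilon_{ws_{i_n}}(\pi)=w(n-1)\,\Theta_{n-1}$. Where you diverge is in evaluating this difference of endpoints. The paper conjugates by $w(n-1)^{-1}$, so the difference becomes $(s_{i_n}\Theta_n)(1)-\Theta_{n-1}(1)$, and then invokes Proposition~\ref{propsec} (the flip identity $s_\alpha f^{\max}_\alpha\pi=(e^{\max}_\alpha\pi)_{-\alpha}$) together with the observation that the flip preserves endpoints; this collapses the difference immediately to $(e^{\max}_{\alpha_{i_n}}\Theta_{n-1})(1)-\Theta_{n-1}(1)=\varepsilon_{\alpha_{i_n}}(\Theta_{n-1})\alpha_{i_n}^\vee$. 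You instead avoid the flip machinery entirely: you compute $\Theta_n(1)=\Theta_{n-1}(1)-\varphi\,\alpha_{i_n}^\vee$ from the elementary endpoint shift of $f_\alpha$, expand $w(n)\Theta_n(1)-w(n-1)\Theta_{n-1}(1)$ by hand using $s_\alpha(\alpha^\vee)=-\alpha^\vee$ and $s_\alpha(\eta(1))-\eta(1)=-\alpha(\eta(1))\alpha^\vee$, and close with the string identity $\varphi_\alpha-\varepsilon_\alpha=\alpha(\eta(1))$. Your route is slightly longer but more self-contained, since it does not rely on Proposition~\ref{propsec} or on the section/flip combinatorics; the paper's route is shorter once Proposition~\ref{propsec} is in hand and makes the role of the flip operation visible. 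Both are sound and normalization-consistent with \S\ref{suse:Paths}.
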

\begin{proof}
Let us compute : 
\[
\Upsilon'_{ws_{i_n}} (\pi) (0) - \Upsilon'_w (\pi)(0) = \Upsilon_{ws_{i_n}} (\pi)(0) + (\pi(1) - \Upsilon_{ws_{i_n}} (\pi)(1)) - \Upsilon_w (\pi) (0) - (\pi(1) - \Upsilon_w (\pi)(1)). 
\]
 But $\Upsilon_{ws_{i_n}} (\pi)(0) = \Upsilon_w (\pi) (0) = 0$. So we are back to 
\[
\Upsilon'_{ws_{i_n}} (\pi) (0) - \Upsilon'_w (\pi)(0) = \Upsilon_w (\pi)(1) - \Upsilon_{ws_{i_n}} (\pi)(1).
\]
 If we multiply this by $w(n-1)^{-1}$, we get : 
\begin{align*}
w(n-1)^{-1} \big (\Upsilon'_{ws_{i_n}} (\pi) (0) - \Upsilon'_w (\pi)(0) \big ) & =  (w(n-1)^{-1}\Upsilon_w (\pi))(1) - (w(n-1)^{-1} \Upsilon_{ws_{i_n}} (\pi)) (1)\\
 & =  (s_{i_n}\Theta_n)(1) - \Theta_{n-1}(1)\\
 & =  (s_{i_n} f^{\max}_{\alpha_{i_n}}\Theta_{n-1})(1) - \Theta_{n-1}(1)\\
 & =  (e^{\max}_{\alpha_{i_n}}\Theta_{n-1})_{-\alpha_{i_n}}(1) - \Theta_{n-1}(1)\\
 & =  (e^{\max}_{\alpha_{i_n}}\Theta_{n-1})(1) - \Theta_{n-1}(1)\\
 & =  \varepsilon_{\alpha_{i_n}}(\Theta_{n-1})\alpha^\vee_{i_n}.
\end{align*}
\end{proof}

Now, we consider the crystal $\mathcal B(\infty)$ associated to the Langlands dual $G^\vee$ and identify it with the union of all the crystals $\mathcal B_\lambda$, where $\mathcal B_\lambda$ is the set of all the LS paths of shape $\lambda$ starting at $0$. Recall  that in~\cite{BKT14} are defined affine Mirkovi\'c-Vilonen polytopes $P_b$ attached to any crystal element $b\in\mathcal B(\infty)$. Thanks to~\cite{BGK12}, Proposition~\ref{propConsecDiff} implies the following analog of~\cite[Proposition 6.7]{E10}.

\begin{prop}\label{propBottomPol}
The polygonal line given by the set of coweights $\{\Upsilon'_w (\pi)(0)\mid w\in W^v\}$ is the bottom part of the affine MV polytope $P_b$, if $b$ corresponds to $\pi$.
\end{prop}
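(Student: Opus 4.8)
The plan is to deduce Proposition~\ref{propBottomPol} from Proposition~\ref{propConsecDiff} by comparing the combinatorics of the vertices $\Upsilon'_w(\pi)(0)$ with the known description of affine MV polytopes from~\cite{BKT14} via~\cite{BGK12}. The starting observation is that an MV polytope is determined by its \emph{pseudo-Weyl} (or GGMS) data: for each $w\in W^v$ a vertex $\mu_w$, subject to the edge-length conditions $\mu_w - \mu_{ws_\alpha} = c\,\beta^\vee$ along each edge, where $\beta=w(\alpha)$ is the corresponding root and $c\in\mathbb Z_{\geq0}$ is the length coefficient; by~\cite{BGK12} these coefficients are exactly the Lusztig datum of the crystal element $b$ with respect to the reduced word adapted to $w$. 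So to identify $\{\Upsilon'_w(\pi)(0)\mid w\in W^v\}$ with the bottom part of $P_b$, it suffices to match the edge vectors.

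First I would recall/set up the dictionary: a path $\pi\in\mathcal B_\lambda\subset\mathcal B(\infty)$ corresponds to a crystal element $b$, and under the identification of $\mathcal B(\infty)$ with LS paths the quantity $\varepsilon_{\alpha}(\theta)$ for $\theta=f^{\max}_{\alpha_{i_{k-1}}}\cdots f^{\max}_{\alpha_{i_1}}\pi$ is precisely the $k$-th coordinate of the Lusztig datum of $b$ in the reduced word $(i_1,\dots,i_n)$ — this is the string-parametrization / Kashiwara-operator description of Lusztig data, and it is the content invoked through~\cite{BGK12}. Then Proposition~\ref{propConsecDiff} says exactly
\[
\Upsilon'_{ws_{i_n}}(\pi)(0) - \Upsilon'_w(\pi)(0) = \varepsilon_{\alpha_{i_n}}(\Theta_{n-1})\,\beta_n^\vee,
\]
with $\beta_n^\vee = w(n-1)(\alpha_{i_n}^\vee)$ the coroot of the root $\beta_n = w(\alpha_{i_n})$ labelling the edge between the chambers $w$ and $ws_{i_n}$. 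Since $\varepsilon_{\alpha_{i_n}}(\Theta_{n-1})\geq 0$ is the last Lusztig coordinate, this is precisely the edge relation defining the bottom part of $P_b$: each edge $w\!-\!ws_{i_n}$ is a nonnegative multiple of $\beta_n^\vee$, with multiplicity the corresponding Lusztig datum. Because the normalization $\Upsilon'_w(\pi)(0)=0$ at $w=\mathrm{id}$ (indeed $\Upsilon_{\mathrm{id}}(\pi)=\pi$ and $\pi(0)=0$, so $\Upsilon'_{\mathrm{id}}(\pi)(0)=0$) pins down a common base vertex, the collection of vertices is uniquely reconstructed from the edge data exactly as in~\cite[Prop.~6.7]{E10}, and it coincides with the bottom part (the ``lowest'' GGMS faces, those indexed by $w\in W^v$ rather than by elements of the full affine Weyl group) of $P_b$.

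The argument is then assembled as follows: (i) check that the map $w\mapsto \Upsilon'_w(\pi)(0)$ is well-defined and independent of the reduced expression, which was already established above via Kashiwara's result of~\cite[\S7.2]{Ka94}; (ii) verify that consecutive vertices along any reduced word satisfy the edge relation — this is Proposition~\ref{propConsecDiff} applied along each length-increasing step, using $\Upsilon_{ws_\alpha}(\pi)=\Upsilon_{s_{w(\alpha)}}(\Upsilon_w(\pi))$ to iterate; (iii) invoke~\cite{BGK12} to recognize the coefficients $\varepsilon_{\alpha_{i_k}}(\Theta_{k-1})$ as the Lusztig datum of $b$, hence the vertex set as the GGMS datum of $P_b$; (iv) conclude by the uniqueness of a polytope with prescribed base vertex and edge data. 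The main obstacle, and the step that deserves the most care, is (iii): one must be precise about the Langlands-duality twist (the polytope of~\cite{BKT14} lives in the coweight lattice of $G$, i.e.\ weight lattice of $G^\vee$, which is why the crystal $\mathcal B(\infty)$ is taken for $G^\vee$) and about the convention matching ``$\varepsilon_{\alpha}$ applied to the successively lowered path'' with the particular chamber-ordering convention for Lusztig data used in~\cite{BKT14}; getting the bottom-versus-top orientation and the sign of $\beta_n^\vee$ consistent is where an error would most likely creep in. Once the conventions are aligned, the proof is a direct translation with no further analytic input.
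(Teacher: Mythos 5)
Your proposal is correct and follows essentially the same route as the paper, which offers no more than a one-line justification: it states that the result ``follows from Proposition~\ref{propConsecDiff} thanks to~\cite{BGK12},'' as the analog of~\cite[Prop.~6.7]{E10}. Your steps (i)--(iv) --- well-definedness via Kashiwara~\cite{Ka94}, the edge relation from Proposition~\ref{propConsecDiff}, identification of $\varepsilon_{\alpha_{i_k}}(\Theta_{k-1})$ with Lusztig data via~\cite{BGK12}, and reconstruction of the GGMS vertex set from the base point $\Upsilon'_{\mathrm{id}}(\pi)(0)=0$ --- are a faithful and correctly flagged unpacking of exactly that chain of references, including the Langlands-duality caveat.
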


Let us give here some results on the paths $\Upsilon'_w (\pi)$ for later use.

\begin{lemm}\label{calcm}
Let $m = \min_{t\in[0,1]} \Big ( w(\alpha) \big ( \Upsilon'_w(\pi)(t)\big )\Big )$, then we have $m=w(\alpha) (\mu - \Upsilon_w(\pi) (1)) - \varepsilon_\alpha (\Theta_w)$.
\end{lemm}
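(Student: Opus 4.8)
The plan is to unwind the definitions of $\Upsilon'_w(\pi)$ and $\Theta_w$ and reduce the assertion to a computation we have essentially already done in Proposition \ref{propConsecDiff}. First I would recall that $\Upsilon'_w(\pi)$ is the translate of $\Upsilon_w(\pi) = w f_w^{\max}(\pi) = w\Theta_n$ by $\pi(1) - \Upsilon_w(\pi)(1)$, so that $\Upsilon'_w(\pi)(t) = w\Theta_n(t) + \mu - \Upsilon_w(\pi)(1)$, with $\mu = \pi(1)-\pi(0)$. Applying the linear form $w(\alpha)$ and using that $w(\alpha)(w v) = \alpha(v)$ for $v\in V$, we get
\[
w(\alpha)\big(\Upsilon'_w(\pi)(t)\big) = \alpha\big(\Theta_n(t)\big) + w(\alpha)\big(\mu - \Upsilon_w(\pi)(1)\big).
\]
The second summand is a constant, so $m = \min_{t}\alpha(\Theta_n(t)) + w(\alpha)(\mu - \Upsilon_w(\pi)(1))$. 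It therefore remains to identify $\min_t \alpha(\Theta_n(t))$ with $-\varepsilon_\alpha(\Theta_w)$, where $\Theta_w = \Theta_n$ in the notation of the lemma.

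The key point is that $\Theta_n = f_w^{\max}(\pi)$ has been maximally lowered by all the operators $f_{\alpha_{i_1}},\dots,f_{\alpha_{i_n}}$; in particular, since $\ell(w s_\alpha)>\ell(w)$ (this is the standing hypothesis under which $\Upsilon_{ws_\alpha}$ was defined, via $\Upsilon_{ws_\alpha}(\pi) = \Upsilon_{s_{w(\alpha)}}(\Upsilon_w(\pi))$), the path $\Theta_n$ is $f_\alpha$-lowered as far as it can be in the direction $\alpha$ as well — equivalently $\varphi_\alpha(\Theta_n) = 0$. For an LS path $\sigma$ starting at $0$, writing $Q = \min_t \alpha(\sigma(t))$ as in the discussion preceding Proposition \ref{propsec}, one has the standard crystal identities $\varepsilon_\alpha(\sigma) = -Q$ and $\varphi_\alpha(\sigma) = \alpha(\sigma(1)) - Q$. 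Hence $\min_t \alpha(\Theta_n(t)) = Q = -\varepsilon_\alpha(\Theta_n) = -\varepsilon_\alpha(\Theta_w)$. Substituting into the displayed formula and rewriting $w(\alpha)(\Upsilon_w(\pi)(1))$ so that the constant becomes $w(\alpha)(\mu - \Upsilon_w(\pi)(1))$ gives exactly $m = w(\alpha)(\mu - \Upsilon_w(\pi)(1)) - \varepsilon_\alpha(\Theta_w)$, as claimed.

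The main obstacle I anticipate is not the linear-algebra bookkeeping — that is routine once one is careful that $w(\alpha)\circ w = \alpha$ on $V$ — but rather pinning down precisely why $\min_t \alpha(\Theta_n(t)) = -\varepsilon_\alpha(\Theta_n)$, i.e. verifying the identity $\varepsilon_\alpha(\sigma) = -\min_t\alpha(\sigma(t))$ for LS paths with $\sigma(0)=0$. This is exactly the content of Littelmann's description of the root operators recalled before Proposition \ref{propsec}: each application of $e_\alpha$ raises the minimal value of $\alpha\circ\sigma$ by exactly $1$ (it reflects the segment on which $\sigma$ last descends from level $Q+1$ to level $Q$), and $e_\alpha\sigma = 0$ precisely when $Q=0$; so the number of times $e_\alpha$ can be applied is $-Q$. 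One should also note that this argument does not even require the hypothesis $\ell(ws_\alpha)>\ell(w)$: the identity $m = w(\alpha)(\mu-\Upsilon_w(\pi)(1)) - \varepsilon_\alpha(\Theta_w)$ holds for the stated minimum regardless, the hypothesis being relevant only for the interpretation of $\Theta_w$ inside the inductive construction of the $\Upsilon_w$.
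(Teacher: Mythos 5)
Your computation is the same as the paper's: translate $\Upsilon'_w(\pi)$ back to $w\Theta_w$, pull the constant $w(\alpha)(\mu-\Upsilon_w(\pi)(1))$ out of the $\min$, use $w(\alpha)\circ w=\alpha$, and conclude with $\min_t\alpha(\Theta_w(t))=-\varepsilon_\alpha(\Theta_w)$, which holds for any LS path starting at $0$ by the description of $e_\alpha$. So this is essentially the paper's proof with the last equality spelled out.

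One remark: the intermediate claim that $\Theta_n=f^{\max}_w(\pi)$ satisfies $\varphi_\alpha(\Theta_n)=0$ is false in general — $f^{\max}_w$ only guarantees $\varphi_{\alpha_{i_n}}(\Theta_n)=0$, and if $\varphi_\alpha$ were also forced to vanish the recursion defining $\Theta_{n+1}=f_\alpha^{\max}(\Theta_n)$ would be trivial. You correctly observe at the end that the hypothesis $\ell(ws_\alpha)>\ell(w)$ plays no role in the lemma, and indeed nothing in your actual chain of equalities uses that claim, so the proof stands; but the digression should be deleted to avoid asserting something untrue.
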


\begin{proof}
Indeed,
\begin{align*}
\min\{w(\alpha) (\Upsilon'(\pi)(t)), t\in[0,1]\} &=  \min\{w(\alpha) (\Upsilon_w(t) + (\mu - \Upsilon_w (1)), t\in[0,1]\}\\
 &=  w(\alpha) (\mu - \Upsilon_w(\pi)  (1)) + \min\{w(\alpha) (\Upsilon_w(\pi) (t), t\in[0,1]\}\\
 &=  w(\alpha) (\mu - \Upsilon_w(\pi)  (1)) + \min\{w(\alpha) (w\Theta_w(t), t\in[0,1]\}\\
 &=  w(\alpha) (\mu - \Upsilon_w (\pi) (1)) + \min\{\alpha (\Theta_w(t)), t\in[0,1]\}\\
 &=  w(\alpha) (\mu - \Upsilon_w(\pi)  (1)) - \varepsilon_\alpha (\Theta_w)\\
\end{align*}
as expected.
\end{proof}

\begin{lemm}\label{calcreflection}
Let $q = \min\{ t\in[0,1]\mid w(\alpha)\big ( \Upsilon'_w(\pi))(t)\big ) = m \}$. Then $\Upsilon'_{ws_\alpha}(\pi)(t)=s_{-w(\qa),m}(\Upsilon'_w(\pi)(t))$, where $s_{-w(\qa),m}$ is the reflection along the wall $M(-w(\alpha), m)$ in $\mathbb A$.
\end{lemm}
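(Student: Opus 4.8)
The plan is to compare the two paths $\Upsilon'_w(\pi)$ and $\Upsilon'_{ws_\alpha}(\pi)$ directly, using the recursive description of the $\Upsilon$-construction together with the factorization $\Upsilon_{ws_\alpha}(\pi) = \Upsilon_{s_{w(\alpha)}}(\Upsilon_w(\pi))$ valid when $\ell(ws_\alpha) > \ell(w)$ (and the symmetric statement otherwise), which was recorded just before Proposition~\ref{propConsecDiff}. First I would reduce to the case $w = \mathrm{id}$ after applying $w^{-1}$: more precisely, set $\sigma = \Upsilon_w(\pi)$, so $w^{-1}\sigma = \Theta_w$ is an LS path of shape $\lambda$, and the statement becomes a statement relating $\Upsilon'_{s_\alpha}$-type data of $\Theta_w$ reflected along the wall $M(-\alpha, m')$ for the appropriate integer $m'$. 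The key input is that $\Upsilon_{s_\alpha}$ of an LS path $\eta$ is $s_\alpha f_\alpha^{\max}\eta$, and by Proposition~\ref{propsec} this equals $(e_\alpha^{\max}\eta)_{-\alpha}$; so the passage from $\eta$ to $s_\alpha f_\alpha^{\max}\eta$ is implemented, section by section, by reflecting exactly the stable sections in $s_\alpha$ and leaving the others (suitably re-indexed) unchanged — but globally, after the translation that turns $\Upsilon$ into $\Upsilon'$, this becomes a single affine reflection.

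Next I would pin down the wall. By Lemma~\ref{calcm}, $m = w(\alpha)(\mu - \Upsilon_w(\pi)(1)) - \varepsilon_\alpha(\Theta_w)$, and $q$ is the first time this minimum of $w(\alpha)\big(\Upsilon'_w(\pi)(t)\big)$ is attained. The point $\Upsilon'_w(\pi)(q)$ therefore lies on the wall $M(-w(\alpha), m) = \{v \mid -w(\alpha)(v) + m = 0\}$, since $w(\alpha)\big(\Upsilon'_w(\pi)(q)\big) = m$. Then I would check that $s_{-w(\alpha),m}$ sends $\Upsilon'_w(\pi)(0)$ to $\Upsilon'_{ws_\alpha}(\pi)(0)$: this is exactly the content of Proposition~\ref{propConsecDiff} once one unwinds the translation defining $\Upsilon'$ — the difference of the two base points is $\varepsilon_{\alpha_{i_n}}(\Theta_{n-1})\beta_n^\vee$, i.e.\ an integer multiple of the coroot $w(\alpha)^\vee$, and the integer is precisely $\varepsilon_\alpha(\Theta_w)$, matching the displacement $-2\big(w(\alpha)(\Upsilon'_w(\pi)(0)) - m\big)/\langle\text{normalization}\rangle$ dictated by the reflection formula $s_{-w(\alpha),m}(x) = x + \big(w(\alpha)(x) - m\big)w(\alpha)^\vee$. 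So the two affine maps $t \mapsto \Upsilon'_{ws_\alpha}(\pi)(t)$ and $t \mapsto s_{-w(\alpha),m}\big(\Upsilon'_w(\pi)(t)\big)$ agree at $t = 0$.

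It remains to see they agree for all $t$, and here I would argue piecewise along the common section partition of $\Theta_w$ relative to $\alpha$ (Proposition~\ref{partsec}), which is preserved by the root operators and by the flip. On a zero or directed section, $f_\alpha^{\max}$ followed by $s_\alpha$ acts as the linear reflection $s_\alpha$ on that piece of the derivative, which translates — after conjugating by $w$ and adding the global shift — into the affine reflection $s_{-w(\alpha),m}$ acting on that piece of $\Upsilon'_w(\pi)$; on a stable section the flip $(\cdot)_{-\alpha}$ also reflects the piece in $s_\alpha$, giving the same conclusion. Since the two affine-piecewise maps have equal derivative on every section and agree at $t=0$, they coincide on $[0,1]$. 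The main obstacle I anticipate is bookkeeping the interaction between the $w$-conjugation, the translation by $\pi(1) - \Upsilon_w(\pi)(1)$ defining $\Upsilon'$, and the reduction of $\Upsilon_{ws_\alpha}$ to $\Upsilon_{s_{w(\alpha)}}\circ\Upsilon_w$ — in particular making sure the integer $m$ from Lemma~\ref{calcm} is exactly the one for which $M(-w(\alpha),m)$ is the reflecting wall, rather than an off-by-one shifted wall; Propositions~\ref{propsec} and~\ref{propConsecDiff} are the tools that resolve this, but the signs need care.
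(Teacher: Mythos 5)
Your overall strategy is the same as the paper's (factor $\Upsilon_{ws_\alpha}=\Upsilon_{s_{w(\alpha)}}\circ\Upsilon_w$, identify the wall via Lemma~\ref{calcm}, match base points via Proposition~\ref{propConsecDiff}, compare the paths piecewise using Proposition~\ref{propsec}), but your piecewise step contains a genuine error. On a $w(\alpha)$-\emph{directed} section of $\Upsilon_w(\pi)$ the operator $f_{w(\alpha)}^{\max}$ reflects that piece and the subsequent global reflection $s_{w(\alpha)}$ reflects it back, so the composite acts as the \emph{identity} on the derivative there, not as $s_{w(\alpha)}$; equivalently, in the flip description $(e_{w(\alpha)}^{\max}\,\cdot\,)_{-w(\alpha)}$ neither $e^{\max}$ nor the flip touches a directed section. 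Hence the derivatives of $\Upsilon'_{ws_\alpha}(\pi)$ and of $s_{-w(\alpha),m}\circ\Upsilon'_w(\pi)$ disagree on every such section, and the identity you assert on all of $[0,1]$ is false in general: already for $w=\mathrm{id}$ and $\pi$ the straight dominant path $t\mapsto t\lambda$ one has $\Upsilon'_{s_\alpha}(\pi)=\pi$, whereas $s_{-\alpha,0}\circ\pi=s_\alpha\circ\pi\neq\pi$.

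What saves the statement --- and is why $q$ is introduced, and the only range in which the lemma is used in the proof of Theorem~\ref{retractdense} --- is that the equality is only needed, and only true, for $t\in[0,q]$. By the Remark following Proposition~\ref{partsec}, a $(-w(\alpha))$-directed section cannot follow a $w(\alpha)$-directed one, so every $w(\alpha)$-directed section of $\Upsilon_w(\pi)$ occurs after the last visit to the minimal $w(\alpha)$-level, in particular after $q$; better still, $f^{\max}_{w(\alpha)}$ does not modify the path at all on $[0,q]$, so there $\Upsilon_{ws_\alpha}(\pi)(t)=s_{w(\alpha)}\bigl(\Upsilon_w(\pi)(t)\bigr)$ pointwise, and combining this with Lemma~\ref{calcm} and Proposition~\ref{propConsecDiff} converts the linear reflection together with the two translations defining $\Upsilon'$ into the affine reflection $s_{-w(\qa),m}$ --- this is exactly the computation carried out in the paper. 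So restrict your section-by-section argument to $[0,q]$, where only zero, stable and $(-w(\alpha))$-directed sections occur and your derivative claims are correct, and the proof closes; note also that the reflection formula should read $s_{-w(\qa),m}(x)=x-\bigl(w(\alpha)(x)-m\bigr)w(\alpha)^\vee$ (not with a plus sign), which is precisely what makes the base points match through $\varepsilon_\alpha(\Theta_w)$.
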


\begin{proof}
One has
\begin{align*}
\Upsilon'_{ws_\alpha}(\pi)(t)&=s_{-w(\qa),m}(\Upsilon'_w(\pi)(t))\\
&=\Upsilon'_w (\pi)(t) - w(\alpha)\Upsilon'_w (\pi)(t)w(\alpha)^\vee + m w(\alpha)^\vee\\
&\overset{\ref{calcm}}{=}\Upsilon'_w (\pi)(t) - w(\alpha)\Upsilon'_w (\pi)(t)w(\alpha)^\vee + (w(\alpha) (\mu - \Upsilon_w (\pi) (1)) - \varepsilon_\alpha (\Theta_w)) w(\alpha)^\vee\\
&\overset{\ref{propConsecDiff}}{=}\Upsilon'_w (\pi)(t) - w(\alpha)\Upsilon'_w (\pi)(t)w(\alpha)^\vee + w(\alpha) (\mu - \Upsilon_w (\pi) (1))w(\alpha)^\vee\\&\qquad\qquad\qquad + \Upsilon_w(\pi)(1)-\Upsilon_{ws_\qa}(\pi)(1).
\end{align*}
Note that \begin{align*}
\Upsilon_{ws_\alpha}(\pi)(t)&=\Upsilon_{ws_\alpha}(\pi)(t)\\
&=s_{w(\qa)}f_{w(\qa)}^{\max}(\Upsilon_w(\pi)(t))\\
&\overset{\ref{propsec}}{=}e_{w(\qa)}^{\max}(\Upsilon_w(\pi)(t))_{-w(\qa)}\\
&=s_{-w(\qa)}(\Upsilon_w(\pi)(t)),
\end{align*}
which amounts to \begin{align*}
\Upsilon'_{ws_\alpha}(\pi)(t)&=\mu-\Upsilon_{ws_\qa}(\pi)(1)+s_{-w(\qa)}(\Upsilon'_w(\pi)(t)-\mu+\Upsilon_w(\pi)(1))\\
&=\mu-\Upsilon_{ws_\qa}(\pi)(1)+\Upsilon'_w(\pi)(t)-\mu\\
&\qquad\qquad+\Upsilon_w(\pi)(1)-w(\qa)(\Upsilon'_w(\pi)(t)-\mu+\Upsilon_w(\pi)(1))w(\qa)^\vee
\end{align*}
which is exactly what we want.

\end{proof}

\section{Retractions}
\label{se:Retractions}

\subsection{Definitions}
\label{suse:Definitions}

Let $\g S$ be any sector germ. For any $x\in \SHI^+$ there is an apartment containing $x$ and $\g S$ \cite[5.1]{R11} and this apartment is conjugated to $\A$ by an element of $G$ fixing $\g S$.
    So, by the usual arguments, using \cite[4.4]{GR08}, we can define the retraction $\qr_{\g S}$ of $\SHI$ onto the apartment $\A$ with center the sector germ $\g S$. 

For any such retraction $\qr_{\g S}$, the image of any segment $[x,y]$ with $x\leq y$ is a $\ql-$path, for some $\lambda\in \overline{C^v_f}$. 
Actually, the image by $\qr_{\g S}$ of any segment $[x,y]$ with $x\leq y$ and $x, y$ in $Y$ is a Hecke path of shape $\ql$ with respect to $\g S$. 

\begin{lemm}
\label{le:RetractSectorGerm}
Let $\qr_{\g S}$ be the retraction associated to the sector germ $\g S = w.\g S_{-\infty}$, for $w\in W^v$. 
Let $\gamma_w : \mathbb C^\times \to G$ be a regular one-parameter subgroup such that $w^{-1}.\gamma_w$ is antidominant. Let $x$ be a point in $\SHI$, then $\qr_{\g S}(x) = \lim_{s\to 0}\gamma_w (s). x$. 
\end{lemm}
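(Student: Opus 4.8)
The plan is to reduce the statement to the case $w=\mathrm{id}$, which is the situation treated in the commented-out lemma for $\g S_{-\infty}$ and an antidominant regular one-parameter subgroup. First I would use the fact recalled at the start of \S\ref{suse:Definitions}: given $x\in\SHI$, there is an apartment $A$ containing both $x$ and (a representative of) the sector germ $\g S = w.\g S_{-\infty}$, and $A = g.\A$ for some $g\in G$ fixing $\g S$. By the description of fixators in \S\ref{suse:Group}, $\mathrm{Fix}_G(\g S_{-\infty}) = T(\mathscr O)U^-$, and conjugating by a representative of $w$ (an element of $N$ whose image in $W^v$ is $w$) shows $\mathrm{Fix}_G(w.\g S_{-\infty}) = w\,T(\mathscr O)U^-\,w^{-1} = T(\mathscr O)\,(wU^-w^{-1})$. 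Write $x = g.z$ with $z\in\A$; then by definition of the retraction $\qr_{\g S}$ centered at $\g S$ (using \cite[4.4]{GR08}), one has $\qr_{\g S}(x) = z$, since $g$ fixes $\g S$.

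Next I would analyze the limit. Since $w^{-1}.\gamma_w$ is antidominant and regular, for $a\in\C$ and a positive real root $\alpha$ we have, writing $\beta = w(\alpha)$ so that $\beta$ runs over $w(\Phi^+)$, a conjugation relation $\gamma_w(s)\,x_{\beta}(a)\,\gamma_w(s)^{-1} = x_{\beta}(\chi_\beta(\gamma_w(s))\,a)$ where $\chi_\beta(\gamma_w(s)) = s^{\langle \beta, w^{-1}.\gamma_w\rangle}$ has a pole or zero according to the sign; antidominance of $w^{-1}.\gamma_w$ forces $\chi_\beta(\gamma_w(s))\to 0$ as $s\to 0$ exactly for $\beta\in w(\Phi^-)$, i.e.\ for the root subgroups generating $wU^-w^{-1}$. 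Combined with the fact that $\gamma_w(s)\in T(\mathscr K)$ acts on $z\in\A$ by an affine translation that also stabilizes $\g S$ (or more precisely, normalizes the relevant structure), one deduces that for $g\in \mathrm{Fix}_G(\g S) = T(\mathscr O)(wU^-w^{-1})$, the element $\gamma_w(s)\,g\,\gamma_w(s)^{-1}$ converges to an element of $T(\mathscr O)$ fixing $z$, hence $\gamma_w(s).x = \gamma_w(s)g.z = \big(\gamma_w(s)g\gamma_w(s)^{-1}\big)\,\gamma_w(s).z \to z$.

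The main obstacle, and the point deserving the most care, is making the limit argument precise at the level of the masure rather than just formally in the group: one must justify that $\lim_{s\to 0}\gamma_w(s).x$ exists in $\SHI$ and genuinely equals $z$, which requires knowing that $\gamma_w(s).z$ stabilizes eventually (or converges) in $\A$ and that the ``error'' factor $\gamma_w(s)g\gamma_w(s)^{-1}$ acts trivially on $z$ in the limit. This is where I would invoke the local structure of the action in \S\ref{suse:Local} (the groups $U_{\alpha,k}$ and the filtrations), together with property \textbf{a)} of \S\ref{suse:Masure}, to control the $U_{\beta}$-factors of $g$ term by term: writing $g = t_0\prod x_{\beta_i}(a_i)$ with $t_0\in T(\mathscr O)$ and $\beta_i\in w(\Phi^-)$, each conjugate $x_{\beta_i}(\chi_{\beta_i}(\gamma_w(s))a_i)$ lies in $U_{\beta_i, k_i(s)}$ with $k_i(s)\to+\infty$, hence eventually fixes any prescribed bounded subset of $\A$ containing $z$; the torus part contributes a translation that I would check stabilizes $z$ in the limit by regularity. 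I would then remark, exactly as in the commented passage, that the proof only used antidominance of $w^{-1}.\gamma_w$, so the statement holds for every sector germ of the form $w.\g S_{-\infty}$, and by $W^v$-equivariance for every sector germ.
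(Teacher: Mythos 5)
Your proposal follows the same route as the paper's proof: pick an apartment $A$ containing $x$ and a representative of $\g S$, write $A = g.\A$ and $x = g.z$ with $g\in \mathrm{Fix}_G(\g S)$, observe that $\qr_{\g S}(x) = z$ by definition of the retraction, and then compute $\lim_{s\to 0}\gamma_w(s).x = \lim_{s\to 0}(\gamma_w(s)g\gamma_w(s)^{-1}).z = z$ using that $g$ lies in $T(\mathscr O)\,(wU^-w^{-1})$ and that conjugation by $\gamma_w(s)$ scales the root-group factors by powers of $s$ that vanish. This is exactly what the paper does, just with the fixator step made explicit; the opening remark about "reducing to $w=\mathrm{id}$'' is a red herring since your argument then handles general $w$ directly.

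However, in the passage where you try to make the limit precise, there is a technical slip. You assert that the conjugate $x_{\beta_i}(\chi_{\beta_i}(\gamma_w(s))a_i)$ lies in $U_{\beta_i,k_i(s)}$ with $k_i(s)\to+\infty$. This is false: $\chi_{\beta_i}(\gamma_w(s))\in\mathbb C^\times$ is a unit, so multiplying $a_i\in\mathscr K$ by it does not change $\mathrm{val}(a_i)$, and $k_i(s)$ is constant. The convergence is not happening in the $t$-adic filtration; rather, the leading complex coefficient of $\chi_{\beta_i}(\gamma_w(s))a_i$ tends to $0$ in $\mathbb C$, i.e.\ the image of the conjugate in each quotient $U_{\beta_i,k}/U_{\beta_i,k+1}\cong(\mathbb C,+)$ from \S\ref{suse:Local} tends to zero. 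That, not a valuation estimate, is the correct mechanism — and it is also what the explicit computation of $\rho_1^w(x)$ in the following subsection uses, where the coefficients $s^{\beta_j(\gamma^w_1)}a_j$ go to $0$ in $\mathbb C$ while the $t^{n_j}$ are untouched. Similarly, there is no ``translation by $\gamma_w(s)$ to check'': since $\gamma_w(s)\in T(\mathbb C)\subset T(\mathscr O)=\mathrm{Fix}_G(\A)$, it fixes $z$ on the nose. The paper's own proof simply asserts the limit and does not engage with these points at all, so your instinct to pin them down is sound; just replace the valuation argument by the complex-coefficient one.
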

\begin{proof}
There exists a representative $\mathfrak s$ of the sector germ $\g S$ in the standard apartment $\mathbb A$ such that $x$ and this sector are in a same apartment $A$. 
In terms of group, this means that there exist an element $g$ of the group $G$ and a point $z$ of $\mathbb A$ such that $A = g.\mathbb A$ and $x = g.z$. But since the action of $g$ stabilizes $\mathfrak s$, $g$ is an element in $w.U^-$. 

On one side, by definition of the retraction $\qr_{\g S}$, $z = \qr_{\g S} (x)$. On the other side, $\lim_{s\to 0} \gamma (s).x = \lim_{s\to 0} \gamma (s).g.z = z$, as $g$ is in $w.U^-$.  
\end{proof}

Let $\alpha = w(\alpha_i)$ be a positive root, with $w\in W^v$ and $\alpha_i$ a simple root. The masure $\SHI$ contains the extended tree $\SHT^w$ associated to $(\mathbb A, \alpha)$ that was defined in \cite{GR14} under the name $\SHI(M_\infty)$. 
Its standard apartment is $\A$ as affine space, the standard apartment of the masure, but with only walls the walls directed by $\ker \qa$.
There, it is also proven that the retraction $\rho_{w\g S_{-\infty}}$ factorizes through $\SHT^w$ and equals the composition \[
\rho_{w\g S_{-\infty}}:\SHI\stackrel{\rho^w_1}{\to}\SHT^w\stackrel{\rho^{-,w}_2}{\to}\mathbb A,\]
 where $\rho^w_1$ is the parabolic retraction defined in 5.6 of \cite{GR14} and $\rho^{\pm,w}_2$ is the retraction with center the end $\pm \infty_{\SHT^w}$, \ie the class of half-apartments in $\SHT^w$ containing $w\g S_{\pm\infty}$.

The parabolic retraction $\rho^w_1$ can also be defined in terms of limit of one-parameter subgroup. Actually, as defined in 5.6 of \cite{GR14}, $\rho^w_1$ is the retraction associated to the panel germ $\g F^w$, germ of the panel $w(-F^v(\{i\}))$. 

Adapting the proof of Lemma \ref{le:RetractSectorGerm}, we express the parabolic retraction in terms of the corresponding one-parameter subgroup.

\begin{lemm}
Let $\gamma^w_1 : \mathbb C^\times \to G$ be an antidominant one-parameter subgroup such that, for all $s$, $\alpha (\gamma^w_1 (s)) = 0$ and, for $j\ne i$, $\alpha_j (w^{-1}\gamma^w_1 (s)) < 0$. Let $x$ be a point in $\SHI$, then $\rho^w_{1}(x) = \lim_{s\to 0}\gamma^w_1 (s). x$. 
\end{lemm}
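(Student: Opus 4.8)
The plan is to mimic the proof of Lemma~\ref{le:RetractSectorGerm}, using the spherical sector-face germ $\g F^w$ in place of the sector germ $w\cdot\g S_{-\infty}$ and the pro-unipotent radical $U(\g F^w)$ of the parabolic $G(\g F^w)$ in place of $w\cdot U^-$. Recall from~\cite[\S5.6]{GR14} that $G(\g F^w)$ is parabolic, with Levi decomposition $G(\g F^w)=L(\g F^w)\ltimes U(\g F^w)$, where $L(\g F^w)=\langle T,U_{\pm\alpha}\rangle$ has semisimple rank one, $U(\g F^w)$ is pro-generated by the root groups $U_{w(\beta)}$ for $\beta\in\Phi^-\setminus\{-\alpha_i\}$, and the extended tree $\SHT^w$ is the union of the apartments $\ell\cdot\A$, $\ell\in L(\g F^w)$, each $U(\g F^w)$-orbit in $\SHI$ meeting $\SHT^w$ in exactly one point, which is its image under $\rho^w_1$.

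First I would run the masure-theoretic step as in Lemma~\ref{le:RetractSectorGerm}. Since $\g F^w$ is spherical, there are a representative $\mathfrak f\subset\A$ of $\g F^w$ and an apartment $A$ of $\SHI$ with $x\in A$ and $\mathfrak f\subset A$, so there are $g\in G$ and $z\in\A$ with $A=g\cdot\A$ and $x=g\cdot z$; as $A\cap\A$ contains $cl_\A(\mathfrak f)$ we may take $g$ to fix $\mathfrak f$, hence $g\in G(\g F^w)$. Writing $g=\ell u$ with $\ell\in L(\g F^w)$ and $u\in U(\g F^w)$, the point $\ell\cdot z=(\ell u^{-1}\ell^{-1})\cdot x$ lies in the $U(\g F^w)$-orbit of $x$ (since $L(\g F^w)$ normalizes $U(\g F^w)$) and in $\SHT^w$ (since $\ell\cdot\A\subset\SHT^w$); therefore $\rho^w_1(x)=\ell\cdot z$.

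Next I would pin down the action of $\gamma:=\gamma^w_1$. As a cocharacter of $T(\mathbb C)$ it lies in $T(\mathscr O)=\mathrm{Fix}_G(\A)$, hence fixes $\A$ pointwise; the condition $\alpha(\gamma)=0$ forces $\gamma(s)$ to centralize $U_{\pm\alpha}$, hence all of $L(\g F^w)$. The key point is the contraction of $U(\g F^w)$: for $\beta=-\sum_j c_j\alpha_j\in\Phi^-$ with $\beta\neq-\alpha_i$, so that $c_j>0$ for some $j\neq i$, the hypotheses $\alpha_i(w^{-1}\gamma)=\alpha(\gamma)=0$ and $\alpha_j(w^{-1}\gamma)<0$ for $j\neq i$ give
\[
\langle w(\beta),\gamma\rangle=\langle\beta,w^{-1}\gamma\rangle=-\sum_j c_j\,\alpha_j(w^{-1}\gamma)=\sum_{j\neq i}c_j\bigl(-\alpha_j(w^{-1}\gamma)\bigr)>0,
\]
so that $\gamma(s)\,x_{w(\beta)}(c)\,\gamma(s)^{-1}=x_{w(\beta)}\!\bigl(s^{\langle w(\beta),\gamma\rangle}c\bigr)\to1$ as $s\to0$, whence $\gamma(s)\,U(\g F^w)\,\gamma(s)^{-1}\to\{1\}$; this is the exact analogue of ``$w^{-1}\gamma$ antidominant'' in Lemma~\ref{le:RetractSectorGerm}. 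Combining these facts, since $\gamma(s)$ commutes with $\ell$ and fixes $z\in\A$,
\[
\gamma(s)\cdot x=\gamma(s)\,\ell\,u\cdot z=\ell\,\bigl(\gamma(s)\,u\,\gamma(s)^{-1}\bigr)\cdot z\longrightarrow\ell\cdot z=\rho^w_1(x)\qquad(s\to0),
\]
which is the claim.

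The thing to get right --- and the only genuine departure from Lemma~\ref{le:RetractSectorGerm} --- is that $\rho^w_1$ retracts onto the extended tree $\SHT^w$ rather than onto $\A$: one must carry the Levi component $\ell$ of the transition element through the computation and check that $\gamma^w_1$ is neutral along $\SHT^w$ (central in $L(\g F^w)$, of trivial valuation) while strictly contracting transversally to it, which is precisely what the sign $\langle w(\beta),\gamma\rangle>0$ encodes. Identifying $\rho^w_1$ with the $U(\g F^w)$-orbit projection, and justifying the convergence of the limit in the pro-unipotent topology, are done exactly as in~\cite{GR14}, just as for Lemma~\ref{le:RetractSectorGerm}.
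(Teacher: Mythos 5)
Your proof is correct and is exactly the adaptation of Lemma~\ref{le:RetractSectorGerm} that the paper has in mind but does not write out (it only says ``adapting the proof of Lemma~\ref{le:RetractSectorGerm}\dots'' and then states the lemma without a proof). The two genuine additions you supply --- carrying the Levi component $\ell\in L(\g F^w)$ through the computation and checking that $\gamma^w_1$ centralizes $L(\g F^w)$ while contracting $U(\g F^w)$ via $\langle w(\beta),\gamma^w_1\rangle>0$ for $\beta\in\Phi^-\setminus\{-\alpha_i\}$, together with the $U(\g F^w)$-orbit description of $\rho^w_1$ from \cite[\S5.6]{GR14} --- are precisely what the ``adaptation'' requires.
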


\subsection{Retraction in the extended tree}

Let $w\in W^v$, $\Upsilon$ be a LS path with respect to $w\g S_{-\infty}$ and $\gamma^w_1 : \mathbb C^\times \to G$ be an antidominant one-parameter subgroup as in the previous Lemma. Let $x$ be a point in the masure $\SHI$ given as 
\[
x = \prod_{j=1}^{\ell} x_{\beta_j}(a_j t^{n_j})\cdot a,
\] 
for $a = \Upsilon (\theta)\in\mathbb A$, $\theta\in [0,1]$, $a_j\in\mathbb C$, and the $\beta_j$'s are negative real roots.
Let us compute $\gamma^w_1(s)\cdot x$. We have:
\[
\gamma^w_1(s)\cdot x = \prod_{j=1}^{\ell} x_{\beta_j}(s^{\beta_j(\gamma^w_1(s))}a_j t^{n_j})\cdot a,
\] since $\gamma^w_1(s)\cdot a = a$, as $\gamma^w_1(s)\in T(\mathbb C)\subset T(\mathscr O)$. 
 But $\beta_j(\gamma^w_1(s))$ is positive except when $\beta_j = - w(\alpha)$, where in that case it is $0$. Hence 
\[
\rho^w_1 (x) = \lim_{s\to 0} \gamma^w_1\cdot x = x_{-w(\alpha)}(b_1 t^{m_1})\cdots x_{-w(\alpha)}(b_p t^{m_p})\cdot a,
\]
 where $b_i = a_{j_i}$, for $\beta_{j_i} = -w(\alpha)$ and among the integers $m_i$, denote by $m_q$ the minimum.
Hence, we can write
\[
\rho^w_1 (x) = x_{-w(\alpha)}(ct^{m_q}D) \cdot a,
\]
 where $D = 1 + d_1t + \cdots + d_mt^m$ is a polynomial in $t$,  for some $m \geqslant 0$, and $c = b_{q_1} + \cdots + b_{q_h}$, where $\{q_1,...,q_h\} = \{1\leqslant j\leqslant p,\ m_j = m_q\}$. 

\medskip
Now we assume that $a$ belongs to $D(w(\alpha), m_q)$ and $c\ne 0$. Further, using the $\mathsf {SL}_2$ relation $x_{-w(\alpha)}(A) = x_{w(\alpha)}(A^{-1})s_{w(\alpha)} (-A^{-1})^{-w(\alpha)^\vee} x_{w(\alpha)}(A^{-1})$, we get
\[
\rho^w_1 (x) = x_{w(\alpha)}((ct^{m_q} D)^{-1})s_{w(\alpha)} (-(ct^{m_q}D)^{-1})^{-w(\alpha)^\vee} x_{w(\alpha)}((ct^{m_q} D)^{-1})\cdot a. 
\]
 As $m_q\leqslant 0$ and the valuation of $(ct^{m_q} D)^{-1}$ is $-m_q$, the last term stabilizes $a$, so we get
\begin{align*}
\rho^w_1 (x)  & =  x_{w(\alpha)}((ct^{m_q} D)^{-1})s_{w(\alpha)} (-c)^{w(\alpha)^\vee}t^{m_qw(\alpha)^\vee}D^{w(\alpha)^\vee} \cdot a \\
  & =  x_{w(\alpha)}((ct^{m_q} D)^{-1})s_{w(\alpha)} t^{m_qw(\alpha)^\vee} \cdot a \\
 & =  x_{w(\alpha)}((ct^{m_q} D)^{-1})s_{-w(\alpha),m_q}  (a).
\end{align*} 
The first step is the fact that $D^{\alpha^\vee}$ and $(-c)^{\alpha^\vee}$ stabilize  $a$ and then $s_w(\alpha)t^{m_qw(\alpha)^\vee}\cdot a =  s_{-w(\alpha),m_q}  (a)$, for any $a\in\mathbb A$. In conclusion, from the two writings of $\rho^w_1 (x)$  we see that $\rho_2^-(\rho^w_1 (x)) = a$ and $\rho_2^+(\rho_1 (x)) = s_{-w(\alpha),m_q}  (a)$. 


\subsection{Retractions of a segment}

Let $\pi$ be an LS path of shape $\lambda$ (with respect to any sector-germ $w\g S_{-\infty}$), denote $\mu = \pi(1)$. Let $[x,\mu]$ be a segment in the masure such that $\rho_{\g S_{-\infty}} ([x,\mu]) = \pi$. In particular, this implies that 
\[
x = \prod_{j=1}^{\ell} x_{\beta_j}(a_j t^{n_j})\cdot [t^{\pi(0)}],
\]
 where $1\geqslant r_1\geqslant\cdots \geqslant r_\ell\geqslant 0$ are the times where the reverse path $\bar\pi$ leave a wall $M(\beta_j,n_j)$ in the positive direction, the $\beta_j$'s being distinct negative roots (several roots can have the same time). Further, as shown in \cite[6.2]{GR08}, the set of segments $[x,\mu]$ in $\SHI$ that retract onto $\pi$ is nonempty and is parameterised by $\ell = \hbox{ddim}(\pi)$ parameters. Further, the set of parameters $(a_j)_{1\leqslant j\leqslant \ell}$ is a finite product of $\mathbb C$ and $\mathbb C^*$. Let us denote this set by $P_{w}(\qp,\pi(1))$.

\medskip
In this section, we want to show the following theorem. 

\begin{theo}
\label{retractdense}
Let $\pi$ be an LS path of shape $\lambda$, with $\pi(0)=0$. For any $w\in W^v$, there exists an dense subset of the parameters $\mathscr O_w\subset P_{w}(\qp,\pi(1))$, such that for all $x\in\mathscr O_w$,
\[
\rho_{w.\g S_{-\infty}} ([x,\mu]) = \Upsilon'_w (\pi),
\]
 the translation of $\Upsilon_w(\pi) = w f_w^{\max} (\pi)$ so that the path ends at $\mu = \pi(1)$.
\end{theo}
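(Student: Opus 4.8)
The plan is to argue by induction on $\ell(w)$, using the factorization of the retraction $\rho_{w\g S_{-\infty}}$ through the extended tree $\SHT^{w'}$ from \S3.1--3.2 at each step, where $w' = w$ (reduced expression ending in $s_{i_n}$). The base case $w = \mathrm{id}$ is precisely the statement that $\rho_{\g S_{-\infty}}([x,\mu]) = \pi = \Upsilon'_{\mathrm{id}}(\pi)$, which holds on all of $P_{\mathrm{id}}(\qp,\pi(1))$ by the defining hypothesis on $[x,\mu]$ (here $\Upsilon_{\mathrm{id}}(\pi) = \pi$ and no translation is needed). For the inductive step, write $w = w' s_{i_n}$ with $\ell(w) = \ell(w')+1$, set $\alpha = \alpha_{i_n}$, and suppose we have a dense $\mathscr O_{w'} \subset P_{w'}(\qp,\pi(1))$ on which $\rho_{w'\g S_{-\infty}}([x,\mu]) = \Upsilon'_{w'}(\pi)$. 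The idea is to use $\rho_{w\g S_{-\infty}} = \rho_2^{-,w}\circ\rho_1^{w}$ and compare with $\rho_{w'\g S_{-\infty}} = \rho_2^{-,w'}\circ\rho_1^{w'}$, noting these two factor through the \emph{same} extended tree $\SHT^{w'} = \SHT^{w}$ attached to $(\mathbb A,w'(\alpha))$, only the direction of the final tree-retraction ($-\infty$ versus $+\infty$ of $\SHT^{w'}$) differs — because $\ell(w' s_{i_n}) > \ell(w')$ means $w'(\alpha) > 0$ and $w\cdot\g S_{-\infty}$ sits at the opposite end of $\SHT^{w'}$ from $w'\cdot\g S_{-\infty}$.

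The heart of the matter is the local computation already carried out in \S3.2: for a point $x$ written over $a = \Upsilon'_{w'}(\pi)(\theta)$ in terms of negative real root groups, the parabolic retraction gives $\rho_1^{w'}(x) = x_{-w'(\alpha)}(ct^{m_q}D)\cdot a$, and the two tree-retractions yield $\rho_2^{-}(\rho_1^{w'}(x)) = a = \Upsilon'_{w'}(\pi)(\theta)$ while $\rho_2^{+}(\rho_1^{w'}(x)) = s_{-w'(\alpha),m_q}(a)$, provided $a \in D(w'(\alpha),m_q)$ and $c \neq 0$. Running this over all $\theta$ (more precisely, over the finitely many linear pieces of the path, tracking which pieces of $\pi$ lie in the half-apartment cut out by the relevant wall), I would identify $m_q$ with $m := \min_t w'(\alpha)(\Upsilon'_{w'}(\pi)(t))$ computed in Lemma~\ref{calcm}, and conclude by Lemma~\ref{calcreflection} that $\rho_2^{+}\circ\rho_1^{w'}$ applied to $[x,\mu]$ reflects $\Upsilon'_{w'}(\pi)$ precisely into $\Upsilon'_{w's_\alpha}(\pi) = \Upsilon'_{w}(\pi)$. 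The density statement is obtained by intersecting $\mathscr O_{w'}$ (pulled back appropriately) with the locus $\{c \neq 0\}$ and the locus ensuring $a$ stays on the correct side of the wall; since $c$ is a nonzero-unless-cancellation sum of the parameters $b_{q_i}$, the condition $c\neq 0$ is open and dense, and the parameter set is a finite product of copies of $\mathbb C$ and $\mathbb C^*$, so these intersections remain dense. One must also check that passing from $P_{w'}(\qp,\pi(1))$ to $P_{w}(\qp,\pi(1))$ is compatible with this picture, i.e.\ that $\mathrm{ddim}^{w}(\Upsilon'_w(\pi)) = \mathrm{ddim}^{w'}(\Upsilon'_{w'}(\pi))$ so the parameter counts match and the segment $[x,\mu]$ genuinely retracts onto $\Upsilon'_w(\pi)$ under $\rho_{w\g S_{-\infty}}$.

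The main obstacle I expect is the bookkeeping around \emph{which} linear segments of the path contribute to the root-group expression over each base point $a = \Upsilon'_{w'}(\pi)(\theta)$, and ensuring that the single ``summed'' coefficient $c$ extracted in \S3.2 is the right one along the whole path rather than just at one time $\theta$ — i.e.\ that the wall-crossing structure of $\Upsilon'_{w'}(\pi)$ with respect to $\ker(w'(\alpha))$ is governed by exactly the section decomposition (zero/stable/directed, Proposition~\ref{partsec}) of the underlying path $\Theta_{w'} = w'^{-1}\Upsilon_{w'}(\pi)$ with respect to $\alpha$, so that applying $s_{-w'(\alpha),m}$ reproduces the combinatorial recipe defining $\Upsilon'_{w s_\alpha}$ via $f^{\max}_\alpha$ and the flip. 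This is precisely where Propositions~\ref{propsec}, \ref{propConsecDiff} and Lemmas~\ref{calcm}, \ref{calcreflection} do the work, so the task is to assemble them correctly rather than to prove anything genuinely new; the density refinement is then a soft finishing argument.
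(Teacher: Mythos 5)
Your plan matches the paper's proof in its essentials: induct on $\ell(w)$, factor both $\rho_{w'\g S_{-\infty}}$ and $\rho_{w\g S_{-\infty}}$ through the parabolic retraction $\rho_1^{w'}$ to the extended tree $\SHT^{w'}$, use the rank-one computation of \S 3.2 to compare the two ends, invoke Lemmas~\ref{calcm} and~\ref{calcreflection}, and extract a dense parameter locus from the nonvanishing of the coefficients $c$. The one place where your sketch is lighter than what the paper actually does is the ``bookkeeping'' you flag as the main obstacle: it is not a single application of the reflection $s_{-w'(\alpha),m}$ along the whole path that one should picture, but a wall-by-wall process. The paper isolates two separate mechanisms in dedicated lemmas --- Lemma~\ref{retstsec} (a $w(\alpha)$-stable section of $\Upsilon'_w(\pi)$ at a wall $M(-w(\alpha),k)$ retracts under $\rho_2^{+,w}$ to its mirror image across \emph{that} wall, whenever the local parameter $c$ is nonzero) and Lemma~\ref{retsym+-} (only the initial piece $0\le t\le q$, up to the first time the global minimum $m$ is attained, is reflected across $M(-w(\alpha),m)$, which is where Lemma~\ref{calcreflection} enters) --- and then assembles them by working backwards from $\mu=\eta(1)$, retracting one wall at a time and imposing, at each wall carrying several stable sections, the conditions $c_1+\dots+c_j\ne0$ for all partial sums. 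Those cumulative nonvanishing conditions (not just $c\ne0$ at a single wall) are what carve out $\mathscr O_{ws_\alpha}$, and the fact that the new conditions involve only parameters attached to $-w(\alpha)$ (hence disjoint from those already constrained in $\mathscr O_w$) is what keeps density. So your ``soft finishing argument'' is right in spirit, but the precise shape of the open conditions, and the separation into stable-section reflections versus the initial-piece reflection, is where the actual work is hiding; you identified the obstacle correctly but did not resolve it.
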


The proof proceeds by induction on the length of $w$. If $w = 1$, we are done as, by assumption, $\rho_{\g S_{-\infty}} ([x,\mu]) = \pi$.

Consider a simple root $\alpha$ and an element $w\in W^v$ such that $\ell (ws_\alpha) > \ell (w)$, recall that $\Upsilon_{ws_\alpha} (\pi) = \Upsilon _{s_{w(\alpha)}} \Upsilon_w (\pi)$. Assume that $\rho_{w\g S_{-\infty}} ([x,\mu]) = \Upsilon'_w (\pi)$, for $x$ in a dense subset $\mathscr O_w\subset P_{w}(\qp,\pi(1))$. We want to show that $\rho_{ws_\alpha \g S_{-\infty}} ([x,\mu]) = \Upsilon'_{ws_\alpha}(\pi) $, for $x$ in a dense subset of the parameters.

Let $\rho_1^w : \SHI\to \SHT^w$ be the parabolic retraction associated to the panel germ $\g F^w$ separating $ws_\alpha \g S_{-\infty}$ and $w \g S_{-\infty}$, where $\SHT^w$ is the extended tree associated to $\g F^w$. Let $\rho_2^{\pm,w} : \SHT^w\to\mathbb A$ be the retraction with center $\pm \infty_{\SHT^w}$ from the extended tree to the standard apartment. So we have, for any $z\in\SHI$,
$\rho_{w\g S_{-\infty}} (z) = \rho_2^{-,w}\circ \rho_1^w (z)$ and $\rho_{ws_\alpha \g S_{-\infty}} (z) =  \rho_2^{+,w}\circ \rho_1^w (z)$. 

We consider the path $\eta = \rho_1^w([x,\mu])$ in the extended tree $\SHT^w$, for $x\in \mathscr O_w$. The reverse path $\overline{\eta}$ starts at $\mu$ in $\mathbb A$ and then changes apartment only at walls parallel to $\ker\alpha$, via elements of the group like $x_{-w(\alpha)} (at^m)$, with $a$ a complex number and $m$ an integer. Since we assume that $\rho_{w\g S_{-\infty}} ([x,\mu]) = \Upsilon'_w (\pi)$, for $x\in\mathscr O_w$, we have $ \rho_2^{-,w} (\eta) = \Upsilon'_w (\pi)$.

\medskip
Let $[r,s]\subset [0,1]$ be a $w(\alpha)$-stable section of $\Upsilon'_w (\pi)$ at a wall $M(-w(\alpha),k)$, for some $k\in\mathbb Z$. 
Suppose that $\eta(s)\in M(-w(\alpha),k)\subset\mathbb A$.
Then $\eta(r)$ also belongs to $M(-w(\alpha),k)$ and  $\eta(s-\varepsilon)$, for $\varepsilon >0$, is in an apartment obtained by applying to $\mathbb A$ an element of the group of the form $x_{-w(\alpha)}(ct^k)$. 

\begin{lemm}\label{retstsec}
Suppose that $c\ne 0$, then $(\rho_2^{+,w}\circ\eta) [r,s]$ is the image of the $w(\alpha)$-stable section of $\Upsilon'_w (\pi)$ by the reflection $s_{-w(\alpha),k}$ along the wall $M(-w(\alpha),k)$.
\end{lemm}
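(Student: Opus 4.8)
The plan is to work entirely inside the extended tree $\SHT^w$, where the picture is one-dimensional transverse to $\ker\alpha$, and to reduce everything to the $\mathsf{SL}_2$ computation already carried out in \S\ref{suse:Definitions}. First I would recall that by hypothesis $[r,s]$ is a $w(\alpha)$-stable section at the wall $M(-w(\alpha),k)$ of the path $\Upsilon'_w(\pi) = \rho_2^{-,w}(\eta)$; this means that $\Upsilon'_w(\pi)(r)$ and $\Upsilon'_w(\pi)(s)$ lie on that wall, while $w(\alpha)(\Upsilon'_w(\pi)(t)) > k$ strictly for $t\in\,]r,s[$, and the section is maximal with this property. Since $\rho_2^{-,w}$ is a bijection near the wall onto its image and only identifies apartments of $\SHT^w$ through the $-\infty_{\SHT^w}$ end, the corresponding sub-path $\eta|_{[r,s]}$ of the tree-path starts and ends on (the preimages of) $M(-w(\alpha),k)$, and the assumption $\eta(s)\in M(-w(\alpha),k)\subset\mathbb A$ together with the stable condition forces $\eta(r)\in M(-w(\alpha),k)$ as well and forces $\eta|_{]r,s[}$ to lie strictly on the $+\infty$ side of that wall inside the tree.

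Next I would localize: write $\eta(s-\varepsilon)$ as obtained from $\mathbb A$ by an element $x_{-w(\alpha)}(ct^k)$ with $c\neq 0$ by assumption, exactly as in the paragraph preceding the lemma. The key point is that the whole arc $\eta|_{[r,s]}$ — which stays on one fixed side of the single wall $M(-w(\alpha),k)$ in the tree and returns to it only at the two endpoints — lies in the single apartment $A' = x_{-w(\alpha)}(ct^k)\cdot\mathbb A$ of $\SHT^w$. (This is the combinatorics of a tree: a geodesic-like path that meets a wall, leaves it on one side, and comes back, never crosses it; the half-apartment on that side is contained in a unique apartment determined by the branching element, here $x_{-w(\alpha)}(ct^k)$ with $c\neq 0$.) Then $\rho_2^{+,w}$ restricted to $A'$ is the Weyl-isomorphism $A'\to\mathbb A$ fixing the half-apartment $D(-w(\alpha),k)$ that contains $w\g S_{+\infty}$; comparing with $\rho_2^{-,w}$, which fixes the \emph{opposite} half-apartment $D(w(\alpha),-k)$, these two retractions differ on $A'$ precisely by the reflection $s_{-w(\alpha),k}$ across $M(-w(\alpha),k)$. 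This is the same mechanism as in the computation $\rho_2^+(\rho_1^w(x)) = s_{-w(\alpha),m_q}(a)$ done in \S\ref{suse:Definitions} via the $\mathsf{SL}_2$ relation $x_{-w(\alpha)}(A) = x_{w(\alpha)}(A^{-1})s_{w(\alpha)}(-A^{-1})^{-w(\alpha)^\vee}x_{w(\alpha)}(A^{-1})$: when $c\neq 0$ the element $x_{-w(\alpha)}(ct^k)$ rewrites so that its action on the $+\infty$ side is the affine reflection $s_{-w(\alpha),k}$.

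Therefore on $[r,s]$ one gets $(\rho_2^{+,w}\circ\eta)(t) = \rho_2^{+,w}(\eta(t)) = s_{-w(\alpha),k}(\rho_2^{-,w}(\eta(t))) = s_{-w(\alpha),k}(\Upsilon'_w(\pi)(t))$, which is exactly the claim: the image arc is the $w(\alpha)$-stable section of $\Upsilon'_w(\pi)$ reflected across $M(-w(\alpha),k)$. I expect the main obstacle to be the justification that the entire arc $\eta|_{[r,s]}$ lies in the \emph{single} apartment $A'$ rather than wandering across several apartments of the tree: this uses that a $w(\alpha)$-stable section stays strictly on one side of exactly one wall of $\SHT^w$ between its endpoints, so no further branching occurs, and that the branching at a wall of a tree is governed by a single root-group element (here one must rule out that $\eta$ touches and leaves other walls $M(-w(\alpha),k')$ with $k'\neq k$ — but on a stable section at level $k$ the value of $w(\alpha)\circ\eta$ stays in $\{k\}\cup\,]k,k+1[\cup\cdots$ without returning to an integer other than $k$, precisely because after $\rho_2^{-,w}$ it is stable at $k$, and $\rho_2^{-,w}$ does not change the $w(\alpha)$-coordinate). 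Once this confinement is established the rest is the $\mathsf{SL}_2$ bookkeeping already performed in the preceding subsection, applied with $c\neq 0$ and $m_q$ replaced by $k$.
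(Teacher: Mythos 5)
Your overall approach matches the paper's: localize inside the extended tree and invoke the $\mathsf{SL}_2$ change of variables computed in \S 3.2. But two of your intermediate claims are incorrect, and the obstacle you single out as the main one is dismissed by a wrong argument.

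The claim that the arc $\eta|_{[r,s]}$ sits in the \emph{single} apartment $A'=x_{-w(\alpha)}(ct^k)\cdot\mathbb A$ is in general false, and your parenthetical justification misreads what a stable section is. The definition only requires $w(\alpha)\bigl(\Upsilon'_w(\pi)(t)\bigr)>k$ strictly on $]r,s[$; the section can certainly cross (and touch and leave) walls $M(-w(\alpha),k')$ with $k'>k$ --- this is the generic situation for a stable section in the partition of Proposition~\ref{partsec} --- and at such walls $\eta$ may change apartments. That is precisely why the paper writes the branching element in the form $x_{-w(\alpha)}(ct^k+c_{k+1}t^{k+1}+\dots+c_mt^m)$ rather than $x_{-w(\alpha)}(ct^k)$. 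Your assertion that ``$w(\alpha)\circ\eta$ stays in $\{k\}\cup\,]k,k+1[\cup\cdots$ without returning to an integer other than $k$'' has no basis: nothing in the definition prevents revisits to integer levels above $k$. Moreover, $D(-w(\alpha),k)=\{\,v: w(\alpha)(v)\le k\,\}$ contains $w\g S_{-\infty}$, not $w\g S_{+\infty}$, and $\rho_2^{+,w}|_{A'}$ is not a Weyl-isomorphism $A'\to\mathbb A$ fixing a half-apartment: $A'$ and $\mathbb A$ do not share a half-apartment through the $+\infty$ end. What the $\mathsf{SL}_2$ relation of \S 3.2 actually provides is a pointwise formula: if $y=x_{-w(\alpha)}(u)\cdot a$ with $a=\rho_2^{-,w}(y)\in\mathbb A$, then $\rho_2^{+,w}(y)=s_{-w(\alpha),\,val(u)}(a)$.

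That last formula is what saves the conclusion, and it is what the paper's proof uses: it depends only on $val(u)$, not on the higher coefficients. Since every point of $\eta(\,]r,s[\,)$ lies in the connected component of $\{w(\alpha)>k\}$ branching off $\mathbb A$ via the coefficient $c\neq0$ at level $k$, the valuation is $k$ throughout, and $\rho_2^{+,w}(\eta(t))=s_{-w(\alpha),k}\bigl(\Upsilon'_w(\pi)(t)\bigr)$ for $t\in[r,s]$. So your final conclusion and your reliance on the $\mathsf{SL}_2$ bookkeeping are right, but the containment claim should be replaced by this valuation argument rather than by the (false) assertion that $\eta$ never branches beyond level $k$.
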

\begin{proof}
By the induction assumption, the path $\rho_2^{-,w}\circ\eta = \Upsilon'_w(\pi)$, for $x$ in $\mathscr O_w$. Because the parameter $c$ is not zero, the path $\overline{\eta}$ leaves the standard apartment at $\eta(s-\varepsilon)$ and will be back at $\eta(r)$. Now the $w(\alpha)$-stable section $[r,s]$ of $\Upsilon'_w (\pi)$ may cross several walls parallel to $M(-w(\alpha),k)$, so that the path $\eta ([r,s])$ may lie in an apartment given by an element of the form $x_{-w(\alpha)}(ct^k +c_{k+1}t^{k+1}+\cdots + c_mt^m )$, for some integer $m>k$. 

On one side, the retraction $\rho_2^{-,w}$ sends $\eta([r,s])$ to the $w(\alpha)$-stable section of $\Upsilon'_w (\pi)$. But on the other side, performing the $\mathsf {SL}_2$ change of variables associated to the extended tree $\SHT^w$ to compute the retraction $\rho_2^{+,w}$, as in the previous section, we see that it sends $\eta([r,s])$ to the reflection of the $w(\alpha)$-stable section by $s_{-w(\alpha),k}$.
\end{proof}

Recall that we fixed
\[
m = \min_{t\in[0,1]} \Big ( w(\alpha) \big ( \Upsilon'_w(\pi)(t)\big )\Big )
\]
 and let 
\[
q = \min\{ t\in[0,1], \quad w(\alpha)\big ( \Upsilon'_w(\pi))(t)\big ) = m \}.
\]
 We know that $\rho_2^{-,w} (\eta(q))$ belongs to the wall $M(-w(\alpha),m)$.

\begin{lemm}\label{retsym+-}
Suppose that $\eta(q)\in M(-w(\alpha),m)$ and that $\eta(q-\varepsilon)$, $\varepsilon >0$, is in an apartment obtained by applying to $\mathbb A$ an element of the group of the form $x_{-w(\alpha)}(dt^{m'})$, with $d\ne 0$. Then $\rho_2^{+,w}(z) = s_{-w(\alpha),m}(\rho_2^{-,w} (z))$, for any $z = \eta(t)$, with $0\leqslant t\leqslant q$.

\end{lemm}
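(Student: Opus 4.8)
The plan is to reduce the statement, by means of the $\mathsf{SL}_2$-computation carried out in the subsection \emph{Retraction in the extended tree}, to the purely tree-theoretic assertion that the point of $\mathbb A$ closest to $\eta(t)$ stays on the wall $M(-w(\alpha),m)$ for all $t\in[0,q]$, and then to prove this by a backward induction on the finitely many times at which $\eta$ changes apartment.

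For the reduction, recall that transversally to $\ker\alpha$ the extended tree $\SHT^w$ is a genuine tree in which $\mathbb A$ is a bi-infinite geodesic whose ends are $\pm\infty_{\SHT^w}$, and $\rho_2^{-,w}$, $\rho_2^{+,w}$ are the retractions onto $\mathbb A$ centred respectively at $-\infty_{\SHT^w}$ and $+\infty_{\SHT^w}$. For $z\in\SHT^w$ write $\mathrm{pr}_{\mathbb A}(z)$ for the point of $\mathbb A$ nearest $z$ and $h(z)=w(\alpha)(\mathrm{pr}_{\mathbb A}(z))$. The computation alluded to shows that if $z=x_{-w(\alpha)}(ct^{k}D)\cdot a$ with $a\in\mathbb A$, $c\ne0$ and $k=\mathrm{val}(ct^{k}D)$, then $\mathrm{pr}_{\mathbb A}(z)\in M(-w(\alpha),k)$, $\rho_2^{-,w}(z)=a$ and $\rho_2^{+,w}(z)=s_{-w(\alpha),k}(a)$; since every point of $\SHT^w$ admits such a presentation, this yields for all $z$ that $\rho_2^{-,w}(z)$ and $\rho_2^{+,w}(z)$ have the same $\ker\alpha$-component and $w(\alpha)$-heights symmetric about $h(z)$, that is
\[
\rho_2^{+,w}(z)=s_{-w(\alpha),\,h(z)}\big(\rho_2^{-,w}(z)\big).
\]
So Lemma~\ref{retsym+-} is precisely the claim that $h(\eta(t))=m$ for every $t\in[0,q]$.

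First I would settle the base case near $q$. For $t$ slightly below $q$ the path $\eta$ runs inside $A'=x_{-w(\alpha)}(dt^{m'})\cdot\mathbb A$, which meets $\mathbb A$ exactly along $D(-w(\alpha),m')$; since $\eta(q)\in M(-w(\alpha),m)\subseteq\mathbb A\cap A'$ while $\eta(q-\varepsilon)\notin\mathbb A$, the common wall $M(-w(\alpha),m')$ of $A'$ and $\mathbb A$ must contain $\eta(q)$, so $m'=m$ and $h\equiv m$ on a left neighbourhood of $q$. Then I would run the induction. Being the $\rho^w_1$-image of a segment, $\eta$ is a $\SHT^w$-Hecke path, hence piecewise linear with finitely many breakpoints and apartment changes. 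Travelling backwards from $q$, assume $h(\eta)\equiv m$ on $(t_0,q]$ with $\eta$ running, just after $t_0$, in an apartment $B$ that meets $\mathbb A$ only inside $D(-w(\alpha),m)$. At $t_0$ the path either stays in $B$ or crosses a wall into a new apartment $B''$; as $\SHT^w$ is a tree, the geodesic from any point of $B''$ to $\mathbb A$ passes through $B$ and reaches $\mathbb A$ on $M(-w(\alpha),m)$, so $B''$ again meets $\mathbb A$ only inside $D(-w(\alpha),m)$ and $h$ stays equal to $m$ off $\mathbb A$. The only way the invariant could break is that $\eta$ re-enters $\mathbb A$ at $w(\alpha)$-height $\ne m$; but every apartment in this chain meets $\mathbb A$ only along $D(-w(\alpha),m)$, so $\eta$ can re-enter $\mathbb A$ only through $M(-w(\alpha),m)$, i.e.\ at height $m$, and then $\rho_2^{-,w}(\eta(\cdot))=\Upsilon'_w(\pi)$ would take the value $m$, forcing that time to be $\ge q$ since $w(\alpha)(\Upsilon'_w(\pi)(t))>m$ for $t<q$. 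Hence the invariant reaches $t=0$, $h(\eta(t))=m$ on $[0,q]$, and the displayed identity follows.

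The hard part will be the apartment bookkeeping inside the induction: $\eta$ will in general bounce several times in $\SHT^w$ before time $q$, each bounce being recorded by $\Upsilon'_w(\pi)$ as a stable or a directed section (compatibly with Lemma~\ref{retstsec}), and one must check carefully that through every such bounce the branch point $\mathrm{pr}_{\mathbb A}(\eta(t))$ genuinely remains on $M(-w(\alpha),m)$ and that $\eta$ cannot sneak back to a part of $\mathbb A$ strictly beyond that wall before time $q$ — which is exactly where the strict inequality $w(\alpha)(\Upsilon'_w(\pi)(t))>m$ on $[0,q)$ and the tree structure of $\SHT^w$ are indispensable.
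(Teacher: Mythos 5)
Your proposal is correct and follows essentially the same route as the paper: both reduce the statement to the $\mathsf{SL}_2$ computation in the extended tree carried out before Lemma~\ref{retstsec} (namely $\rho_2^{-,w}(z)=a$ and $\rho_2^{+,w}(z)=s_{-w(\alpha),k}(a)$ for $z=x_{-w(\alpha)}(u)\cdot a$ with $\mathrm{val}(u)=k$ and $w(\alpha)(a)\ge k$), the paper merely leaving implicit the point you make explicit, namely that the branch level of $\eta(t)$ over $\mathbb A$ stays equal to $m$ on all of $[0,q]$. One caveat: your intermediate assertion that each successive apartment $B''$ meets $\mathbb A$ only inside $D(-w(\alpha),m)$ is not what the tree geometry gives (an apartment through a point hanging at level $m$ may perfectly well also contain points of $\mathbb A$ beyond that wall), but this claim is not needed — what your argument really uses, and what does follow from the fact that the subtree attached along $M(-w(\alpha),m)$ is separated from the rest of $\mathbb A$ by that wall together with the strict inequality $w(\alpha)\big(\Upsilon'_w(\pi)(t)\big)>m$ for $t<q$, is that $\eta$ cannot leave that subtree, hence cannot change its branch level, before time $q$.
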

\begin{proof}
Thanks to the assumptions, the apartment containing $\eta(q-\varepsilon)$ shares an half-apartment with $\mathbb A$ along the wall $M(-w(\alpha),m')$. So computing in the extended tree, using the same techniques as in the proof of Lemma \ref{retstsec}, we see that 
$\rho_2^{+,w} (z)  =  s_{-w(\alpha), m'} (\rho_2^{-,w} (z)) $.
\end{proof}

We are ready to conclude our induction step. We want to prove that $\rho_{ws_\alpha \g S_{-\infty}} ([x,\mu]) = \Upsilon'_{ws_\alpha}(\pi) $ for $x$ in a dense subset of the set of parameters, assuming $\rho_{w\g S_{-\infty}} ([x,\mu]) = \Upsilon'_w (\pi)$, for $x$ in $\mathscr O_w$. 

Both retractions factorise through the path $\eta = \rho_1^w([x,\mu])$ contained in the extended tree $\SHT^w$. Now, we compute the retraction $\rho_2^{+,w} (\eta)$ stepwise by retracting some $w(\alpha)$-section at a time and starting backwards from $\mu = \pi(1) = \eta (1)\in\mathbb A$.

\medskip
We know that $\eta (1 - \varepsilon)$ is also in $\mathbb A$. Let $r_1\in [0,1]$ the biggest real such that $\eta(r_1)\in\mathbb A$ and $\eta (r_1-\varepsilon)\not\in\mathbb A$. Then $\eta(r_1)$ belongs to a wall $M(-w(\alpha),k)$ and $\eta (r_1-\varepsilon)$ is in an apartment given by acting by $x_{-w(\alpha)}(ct^k)$ on $\mathbb A$. If there exists $s\in [0,r_1]$ such that $\eta(s)\in M(-w(\alpha),k)$, then the path $\Upsilon'_w(\pi)$ admits a $w(\alpha)$-stable section at that wall. And we are in the position of applying Lemma \ref{retstsec}.

But there might be several $w(\alpha)$-stable sections at that wall. Let $c_1, ... , c_{j_1}$ be the parameters associated to those stable sections at times $s_1,..., s_{j_1}$ respectively. And since we want all the stable sections to be flipped along their walls, we have to impose that $c_1 + c_2 + \cdots + c_{j} \ne 0$, for all $1\leqslant j\leqslant j_1$. 
So, by applying Lemma \ref{retstsec}, we retract those stable sections to obtain a path $\eta_1$ such that $\eta_1 (t) = \Upsilon'_{ws_\alpha}(\pi)(t)$ for $t\in [s_{j_1},1]$. 

We start the procedure again with $\eta_1$, until we reach the next wall with some stable sections. 
We repeat this procedure until we reach the wall $M(-w(\alpha),m)$. Again taking into account the fact that stable sections may exist at times between $q$ and the last $r_k$ on the wall $M(-w(\alpha),m)$, we have to impose some analogous conditions on the parameters. 

\medskip

Finally, we are left with $0\le t\le q$. Thanks to~\ref{retsym+-}, this case is dealt with using~\ref{calcreflection}. And the conditions on the parameters define the dense subset $\mathscr O_{ws_\alpha}$. Note that the parameters involved in this inductive step were not involved in the computation before as they are attached to the negative root $-w(\alpha)$.

\section{$\widehat{\mathfrak{sl}}_2$ MV polytopes}

We know thanks to~\cite{MT14} that the description of rank $2$ affine MV polytopes given in~\cite{BDKT12} matches the original definition of~\cite{BKT14}. 
As we will only use the definitions and results of~\cite{BDKT12}, we will use the formalism used therein and start with a recollection of its content. The only adaptation needed is the one induced by working with $\mathcal B(\infty)$ instead of $\mathcal B(-\infty)$.

\subsection{Recollection of Baumann--Dunlap--Kamnitzer--Tingley}

Denote by $\qa_0$ and $\qa_1$ the simple roots for $\widehat{\mathfrak{sl}}_2$, and $\delta=\qa_0+\qa_1$ the primitive imaginary root.
Consider fundamental weights $\omega_0$ and $\omega_1$ satisfying $(\qa_i,\omega_j)=\delta_{i,j}$.
We call a \emph{Lusztig datum} a family $(a_k,\lambda_k,a^k)_{k\in \N_{>0}}$ of nonnegative integers, with finite support, such that $\lambda_1\ge\lambda_2\ge\dots$ defines a partition $\lambda$ of size $|\lambda|:=\sum_{k\ge1}\lambda_k$.
To a pair of Lusztig data $(a_k,\lambda_k,a^k)_{k\in \N_{>0}}$ and $(\bar a_k,\bar\lambda_k,\bar a^k)_{k\in \N_{>0}}$ and a weight $\mu_0=\bar\mu_0$ we associate vertices $\mu_k,\bar\mu_k,\mu^k,\bar\mu^k$, $k\in\N$, defined by\begin{align*}
\mu_k-\mu_{k-1}&=a_k(\qa_1+(k-1)\delta)\\
\bar\mu_k-\bar\mu_{k-1}&=\bar a_k(\qa_0+(k-1)\delta)\\
\mu_\infty&=\lim_\infty\mu_k\\
\bar\mu_\infty&=\lim_\infty\bar\mu_k\\
\mu^\infty&=\lim_\infty\mu^k=\mu_\infty+|\lambda|\delta\\
\bar\mu^\infty&=\lim_\infty\bar\mu^k=\bar\mu_\infty+|\bar\lambda|\delta\\
\mu^{k-1}-\mu^k&=a^k(\qa_0+(k-1)\delta)\\
\bar\mu^{k-1}-\bar\mu^{k}&=\bar a^k(\qa_1+(k-1)\delta).
\end{align*}
where the limits make sense thanks to the finite support condition. We obtain a polytope if $\bar\mu^0=\mu^0$, in which case we say that our initial Lusztig data have same weight. We call $(\bar a_k,\bar\lambda_k,\bar a^k)$ the left datum, and $(a_k,\lambda_k,a^k)$ the right one. We say that the polytope is decorated by the partitions $\bar\lambda$ and $\lambda$.

\begin{defi}\label{affmvdef}
Such a polytope is called an \textit{affine MV polytope} if \begin{enumerate}[label=(\roman*)]
\item\label{basseq}  for each $k \ge 2$, $( \bar\mu_{k}-\mu_{k-1}, \omega_1) \le 0$ and $(\mu_{k} -\bar\mu_{k-1}, \omega_0) \le 0$, with at least one of these being an equality;
\item\label{hauteq}   for each $k \ge 2$, $(\bar\mu^{k} -\mu^{k-1}, \omega_0) \ge 0$ and $(\mu^{k} -\bar\mu^{k-1}, \omega_1) \ge 0$, with at least one of these being an equality;
\item If $(\mu_\infty, \bar\mu_\infty)$ and $(\mu^\infty, \bar\mu^\infty)$ are parallel then $\lambda=\bar\lambda$. Otherwise, one is obtained from the other by removing a part of size $ (\mu_\infty - \bar\mu_\infty, \alpha_1)/2$;
\item  $ \lambda_1, \bar\lambda_1  \leq (\mu_\infty - \bar\mu_\infty, \alpha_1)/2$.
\end{enumerate}
We will denote by $MV$ the set of (affine) MV polytopes (up to translation).
\end{defi}
It is proved in~\cite[Theorem 3.11]{BDKT12} that for any given Lusztig datum $\mathbf a$, there is a unique MV polytope $P_{\mathbf a}$ whose right datum is $\mathbf a$. We reproduce the example given in \textit{loc.\!\! cit.}:

\raisebox{0.05\height}{\begin{tikzpicture}[yscale=0.2, xscale=0.6]

\draw [line width = 0.01cm, color=gray] (-1.5,2.5) -- (3.5, 7.5);
\draw [line width = 0.01cm, color=gray] (-2,4) -- (4, 10);
\draw [line width = 0.01cm, color=gray] (-2.5,5.5) -- (4, 12);
\draw [line width = 0.01cm, color=gray] (-3.3333,8.6666) -- (4, 16);
\draw [line width = 0.01cm, color=gray] (-3.6666,10.3333) -- (4, 18);
\draw [line width = 0.01cm, color=gray] (-4.25,13.75) -- (4, 22);
\draw [line width = 0.01cm, color=gray] (-4.5,15.55) -- (4, 24);
\draw [line width = 0.01cm, color=gray] (-4.75,17.25) -- (4, 26);
\draw [line width = 0.01cm, color=gray] (-5,19) -- (4, 28);
\draw [line width = 0.01cm, color=gray] (-5,21) -- (4, 30);
\draw [line width = 0.01cm, color=gray] (-5,23) -- (4, 32);
\draw [line width = 0.01cm, color=gray] (-5,25) -- (4, 34);
\draw [line width = 0.01cm, color=gray] (-4.6666,29.3333) -- (3.6666, 37.6666);
\draw [line width = 0.01cm, color=gray] (-4.3333,31.6666) -- (3.33333, 39.3333);

\draw [line width = 0.01cm, color=gray] (1,1) -- (-2,4);
\draw [line width = 0.01cm, color=gray]  (2,2)-- (-3,7);
\draw [line width = 0.01cm, color=gray] (2.5,3.5) -- (-3.5,9.5);
\draw [line width = 0.01cm, color=gray] (3.3333,6.6666) -- (-4.3333,14.3333);
\draw [line width = 0.01cm, color=gray] (3.6666,8.3333) -- (-4.6666,16.6666);
\draw [line width = 0.01cm, color=gray] (4,12) -- (-5,21);
\draw [line width = 0.01cm, color=gray] (4,14) -- (-5,23);
\draw [line width = 0.01cm, color=gray] (4,16) -- (-5,25);
\draw [line width = 0.01cm, color=gray] (4,18) -- (-5,27);

\draw [line width = 0.01cm, color=gray] (4,20) -- (-4.75,28.75);
\draw [line width = 0.01cm, color=gray] (4,22) -- (-4.5,30.5);
\draw [line width = 0.01cm, color=gray] (4,24) -- (-4.25,32.25);
\draw [line width = 0.01cm, color=gray] (4,28) -- (-3.5,35.5);

\draw [line width = 0.01cm, color=gray] (4,32) -- (-2,38);
\draw [line width = 0.01cm, color=gray] (4,34) -- (-1,39);
\draw [line width = 0.01cm, color=gray] (4,36) -- (0,40);
\draw [line width = 0.01cm, color=gray] (3.5,38.5) -- (1,41);

\draw [line width = 0.01cm, color=gray] (-3,7) -- (4, 14);
\draw [line width = 0.01cm, color=gray] (-4,12) -- (4, 20);

\draw [line width = 0.01cm, color=gray] (-3,37) -- (4, 30);
\draw [line width = 0.01cm, color=gray] (-4,34) -- (4, 26);
\draw [line width = 0.01cm, color=gray] (-5,27) -- (4, 18);

\draw (0,0) node {$\bullet$};

\draw (-1,1) node {$\bullet$};
\draw (-3,7) node {$\bullet$};
\draw (-4,12) node  {$\bullet$};
\draw (-5,19) node  {$\bullet$};
\draw (-5,23) node {$\bullet$};
\draw (-5,25) node {$\bullet$};
\draw  (-5,27) node {$\bullet$};
\draw (-4,34) node {$\bullet$};
\draw (-3,37) node {$\bullet$};
\draw  (2,42)  node {$\bullet$};

\draw (2,2)  node {$\bullet$};
\draw (3,5) node {$\bullet$};
\draw (4,10) node {$\bullet$};
\draw (4,28) node {$\bullet$};

\draw (4,28) node {$\bullet$};
\draw (4,32) node {$\bullet$};
\draw (4,34) node {$\bullet$};
\draw (4,36) node {$\bullet$};
\draw (3,41) node {$\bullet$};

\draw [line width = 0.04cm] (0,0)--(-1,1);
\draw  [line width = 0.04cm] (-1,1)--(-3,7);
\draw [line width = 0.04cm] (-3,7)--(-4,12);
\draw [line width = 0.04cm] (-4,12)--(-5,19);
\draw [line width = 0.04cm] (-5,19)--(-5,27);
\draw [line width = 0.04cm] (-5,27)--(-4,34);
\draw [line width = 0.04cm] (-4,34)--(-3,37);
\draw [line width = 0.04cm](-3,37)--(2,42);

\draw [line width = 0.04cm](0,0)--(2,2);
\draw [line width = 0.04cm] (2,2)--(3,5);
\draw [line width = 0.04cm] (3,5)--(4,10);
\draw [line width = 0.04cm] (4,10)--(4,36);
\draw [line width = 0.04cm](4,36)--(3,41);
\draw [line width = 0.04cm] (3,41)--(2,42);

\draw [line width = 0.05cm] (-1,1) -- (3,5);
\draw [line width = 0.05cm] (3,5) -- (-4,12);
\draw [line width = 0.05cm] (4,10) -- (-5,19);
\draw [line width = 0.05cm] (-5,27) -- (4,36);
\draw [line width = 0.05cm] (-4,34) -- (3,41);

\draw (1,1) node {\tiny $\bullet$};

\draw (4,12) node {\tiny $\bullet$};
\draw (4,14) node {\tiny $\bullet$};
\draw (4,16) node {\tiny $\bullet$};
\draw (4,18) node {\tiny $\bullet$};
\draw (4,20) node {\tiny $\bullet$};
\draw (4,22) node {\tiny $\bullet$};
\draw (4,24) node {\tiny $\bullet$};
\draw (4,26) node {\tiny $\bullet$};
\draw (4,30) node {\tiny $\bullet$};

\draw (-2,4) node {\tiny $\bullet$};
\draw (-5,21) node {\tiny $\bullet$};

\draw (-2,38) node {\tiny $\bullet$};
\draw (-1,39) node {\tiny $\bullet$};
\draw (0,40) node {\tiny $\bullet$};
\draw (1,41) node {\tiny $\bullet$};

\draw (1.2,-0.35) node {\tiny $\alpha_1$};
\draw (3,2.7) node {\tiny $\alpha_1+\delta$};
\draw (4.4,7) node {\tiny $\alpha_1+2\delta$};
\draw (4.6,23) node {\tiny $\delta$};
\draw (4.4,39) node {\tiny $\alpha_0+2\delta$};
\draw (2.8,42.5) node {\tiny $\alpha_0$};

\draw (-0.5,41) node {\tiny $\alpha_1$};
\draw (-4.15,36.3) node {\tiny $\alpha_1+\delta$};
\draw (-5.5,31) node {\tiny $\alpha_1+3\delta$};
\draw (-5.5,23) node {\tiny $\delta$};
\draw (-5.5,15) node {\tiny $\alpha_0+3\delta$};
\draw (-4.5,9) node {\tiny $\alpha_0+2\delta$};
\draw (-3,3.5) node {\tiny $\alpha_0+\delta$};
\draw (-0.7,-0.7) node {\tiny $\alpha_0$};

\draw(0,-1) node {$\mu_0 $};
\draw(2.3,0.3) node {$\mu_1$};

\draw(3.5,4.5) node {$\mu_2$};
\draw(7,10) node {$\mu_3= \mu_4 = \cdots = \mu_\infty$};
\draw(7,36) node {$\mu^3= \mu^4 = \cdots = \mu^\infty$};
\draw(4.3,41.7) node {$\mu^1= \mu^2$};

\draw(-1.4,0) node {$\bar \mu_1$};
\draw(-3.5,6) node {$\bar \mu_2$};
\draw(-4.5,12) node {$\bar \mu_3$};
\draw(-8,19) node {$\bar \mu_\infty = \cdots = \bar \mu_5= \bar \mu_4$};
\draw(-8,27) node {$\bar \mu^\infty = \cdots = \bar \mu^5= \bar \mu^4$};
\draw(-5.3,34) node {$\bar \mu^3= \bar \mu^2$};
\draw(-3.2,38.2) node {$\bar \mu^1$};
\draw(2.1,43.7) node {$\mu^0$};

\draw[line width = 0.03cm, ->] (0,-4)--(1,-3);
\draw[line width = 0.03cm, ->] (0,-4)--(-1,-3);
\draw (1.5,-2.6) node {$\alpha_1$};
\draw (-1.5,-2.6) node {$\alpha_0$};

\end{tikzpicture}}

Here, for instance, we have $a_1=2$, $a_2=a_3=1$, $a_k=0$ for $k\ge4$, $\lambda=(9,2,1^2)$, $a^k=0$ for $k\ge4$, $a^3=1$, $a^2=0$, $a^1=1$ for the right Lusztig datum. Bold diagonals correspond to a choice, for each $k\ge2$, of an \emph{active} diagonal, that is satisfying equality in~\ref{basseq} or~\ref{hauteq} in~\ref{affmvdef}. Such a choice form a \emph{complete system} of active diagonals. 

Thanks to~\cite[Remark 3.7]{BDKT12}, we have the following.

\begin{prop}\label{actdiag}
Any active diagonal cuts a MV polytope in two MV polytopes.
\end{prop}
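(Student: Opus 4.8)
The plan is to verify directly that the two pieces obtained by cutting along an active diagonal satisfy the four conditions of Definition~\ref{affmvdef}, reading them off from the conditions satisfied by the ambient polytope $P$. First I would set up notation: suppose the active diagonal is chosen at some level $k_0\ge2$, say it realizes the equality $(\bar\mu_{k_0}-\mu_{k_0-1},\omega_1)=0$ in~\ref{basseq} (the three other cases---equality in the second inequality of~\ref{basseq}, or either equality in~\ref{hauteq}---are symmetric, exchanging the roles of $\alpha_0\leftrightarrow\alpha_1$, of bars, and of sub/superscripts, so it suffices to treat one). Cutting along this diagonal produces a ``lower'' polytope $P'$ whose right datum is $(a_k)_{k<k_0}$ together with new top data, and whose left datum is truncated analogously, and an ``upper'' polytope $P''$ whose data are the complementary pieces; the vertices $\mu_k,\bar\mu_k,\mu^k,\bar\mu^k$ of $P'$ and $P''$ are literally the corresponding vertices of $P$ for the indices that survive, and the cutting diagonal becomes the top edge ($\mu^0{}'=\bar\mu^0{}'$) of $P'$ and the bottom edge ($\mu_0{}''=\bar\mu_0{}''$) of $P''$.

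Next I would check the conditions one at a time. Conditions~\ref{basseq} and~\ref{hauteq} for $P'$ and $P''$ are inherited immediately: for indices strictly between the bottom and the cut they are exactly the corresponding conditions for $P$, and at the cut itself the relevant condition holds because the diagonal was chosen \emph{active}, i.e.\ the equality case is built in. The only subtlety is the indices adjacent to the new bottom/top edges of $P'$ and $P''$, where one has to observe that the data defining these edges are precisely arranged (by the construction of the cut, following~\cite[Remark~3.7]{BDKT12}) so that the inequalities degenerate correctly; here I would invoke~\cite[Remark~3.7]{BDKT12} for the precise combinatorial recipe rather than recomputing it. For conditions~(iii) and~(iv), the key observation is that a horizontal cut does not change the ``direction at infinity'' data on the side that contains the imaginary direction: the vectors $\mu_\infty-\bar\mu_\infty$ and $\mu^\infty-\bar\mu^\infty$ of the relevant sub-polytope agree with those of $P$ (or are the trivial/parallel case), so the partition-comparison conditions~(iii) and the size bound~(iv) for $P'$, $P''$ reduce to those for $P$. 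The decorating partitions $\lambda',\bar\lambda',\lambda'',\bar\lambda''$ are sub-partitions (in fact prefixes/suffixes, by the $\lambda_1\ge\lambda_2\ge\cdots$ convention) of $\lambda,\bar\lambda$, so the bound~(iv) is automatic, and~(iii) follows since the quantity $(\mu_\infty-\bar\mu_\infty,\alpha_1)/2$ is unchanged.

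The main obstacle I anticipate is purely bookkeeping: making sure that the new ``cap'' data assigned to $P'$ at the cut, and the new ``base'' data assigned to $P''$, are exactly the ones forced by the active diagonal, so that $P'$ and $P''$ really are honest MV polytopes with the right/left data one expects, and in particular that the equality hypothesis at level $k_0$ in $P$ becomes, for $P'$, the requirement that its top edge is a genuine single segment (degenerate $\mu^0{}'=\bar\mu^0{}'$) and, for $P''$, that its bottom edge is genuine ($\mu_0{}''=\bar\mu_0{}''$). Once the translation of data through the cut is pinned down via~\cite[Remark~3.7]{BDKT12}, every one of the four conditions is a direct, case-free (after the initial symmetry reduction) consequence of the corresponding condition for $P$, and the proof is complete.
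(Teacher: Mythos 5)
The paper gives no proof of Proposition~\ref{actdiag}: it is stated as an immediate consequence of \cite[Remark~3.7]{BDKT12} and the burden of verification is entirely deferred to that reference. Your plan is therefore not ``a different route from the paper's'' so much as an attempt to unpack what the cited remark actually says, and the overall strategy --- verify the four conditions of Definition~\ref{affmvdef} for each of the two pieces, using the activity of the diagonal at the cut --- is the right one.

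Two points in your write-up should be sharpened before this becomes a real proof. First, the claim that ``the vertices $\mu_k,\bar\mu_k,\mu^k,\bar\mu^k$ of $P'$ and $P''$ are literally the corresponding vertices of $P$ for the indices that survive'' is misleading for the upper piece $P''$. The $k$-th bottom edge of an MV polytope is constrained to have direction $\alpha_1+(k-1)\delta$ (resp.\ $\alpha_0+(k-1)\delta$), so if the cut is at level $k_0$ with the active diagonal $\bar\mu_{k_0}-\mu_{k_0-1}$ directed by $\alpha_0$, then $P''$ has $a''_1=\dots=a''_{k_0-1}=0$, $\bar a''_2=\dots=\bar a''_{k_0}=0$, $\bar a''_1=(\bar\mu_{k_0}-\mu_{k_0-1},\omega_0)$, and only for $k\ge k_0$ does one have $a''_k=a_k$, $\bar a''_{k+1}=\bar a_{k+1}$; correspondingly the vertices $\mu''_1,\dots,\mu''_{k_0-1}$ all collapse to the new base $\mu_{k_0-1}$ and $\bar\mu''_1,\dots,\bar\mu''_{k_0}$ collapse to $\bar\mu_{k_0}$. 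This is more than ``truncation'' and has to be stated, since it is what makes the edge directions legal. Second, conditions~\ref{basseq}--\ref{hauteq} at the indices $2\le k\le k_0$ of $P''$ are not literally inherited from $P$: for those $k$ one has $\bar\mu''_k-\mu''_{k-1}=\bar\mu_{k_0}-\mu_{k_0-1}$, and it is precisely the activity $(\bar\mu_{k_0}-\mu_{k_0-1},\omega_1)=0$ of the chosen diagonal that furnishes the required equality, while the complementary inequality $(\mu''_k-\bar\mu''_{k-1},\omega_0)\le 0$ is a one-line computation from nonnegativity of $\bar a''_1$; similarly at level $k_0$ one needs $(\mu_{k_0}-\bar\mu_{k_0},\omega_0)\le 0$, which follows from $P$'s inequality at level $k_0$ together with $(\bar\mu_{k_0}-\bar\mu_{k_0-1},\omega_0)\ge 0$. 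For the lower piece $P'$, conditions~\ref{hauteq}, (iii), (iv) do not ``reduce to those for $P$'' --- they are satisfied trivially because $\lambda'=\bar\lambda'=\emptyset$ and $[\mu'_\infty,\bar\mu'_\infty]=[\mu^{\infty\prime},\bar\mu^{\infty\prime}]$ is the cut diagonal itself. With these corrections your plan does close the argument, and it is a useful supplement to the paper's citation.
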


 We use a different convention than in~\cite{BDKT12} and define a crystal structure on the set $MV$ of MV polytopes by setting 
 $f_0(P_{\mathbf a})=P_{f_0(\mathbf a)}$ where $f_0(\mathbf a)$ just adds $1$ to $\bar a_1$, and $f_1(P_{\mathbf a})=P_{f_1(\mathbf a)}$ where $f_0(\mathbf a)$ just adds $1$ to $a_1$ (here we simply use $f_i$ instead of $f_{\alpha_i}$ for $i\in\Z/2\Z$.
We finally recall that~\cite[Theorem 4.5]{BDKT12} proves that $MV$ realizes the crystal $\mathcal B(\infty)$.

\subsection{Link with retractions}\label{MVretract}

Note here that we have $G^\vee=G$.
Consider $b\in \mathcal B(\infty)$. We will denote by $P_b$ the corresponding MV polytope. Thanks to~\ref{propBottomPol}, the lower vertices of the  MV polytope associated to $b$ are given, up to a common translation, by the weights of\[
s_{i_1}\dots s_{i_k} f_{i_k}^{\max}\dots  f_{i_1}^{\max}(b)
\]
where $i_p\in\Z/2\Z$ and $i_{p+1}=i_p+1$. Precisely, we get $\overline \mu_k$ (resp.\ $\mu_k$) when $i_1=0$ (resp.\ $i_1=1$).
Thanks to~\ref{propBottomPol} and~\ref{retractdense}, this bottom part can hence be recovered using retractions on paths.

\begin{rema}
As a consequence, vertices $\overline \mu^k,\mu^k$ of the upper part of the polytope are given by the opposite of the weights of\[
s_{i_1}\dots s_{i_k} f_{i_k}^{\max}\dots  f_{i_1}^{\max}(b^*)
\]
where ${}^*$ denotes Kashiwara's involution. Indeed ${}^*$ simply transforms a MV polytope in its opposite, see~\cite[Definition 4.3 $\&$ Theorem 4.5]{BDKT12}
\end{rema}

\subsection{Top polytopes}

\begin{defi}
We call \emph{top polytope} an MV polytope $P$ such that $\lambda=\bar\lambda=0$ and $a_\bullet=\bar a_\bullet=0$ except $a_1$ or $\bar a_1$.
We denote by $MV^t$ the set of top polytopes. 
\end{defi}

Thanks to~\ref{actdiag}, the piece of a MV polytope $P$ above the diagonal $[\bar\mu^\infty,\mu^\infty]$ is itself a MV polytope, that we denote by $P^t$.
Denote by $b_0$ the highest weight element of $\mathcal B(\infty)$.

\begin{prop}\label{toponly}
Consider $b\in \mathcal B(\infty)$. We have $P_b\in MV^t$ if and only if \[
b=f_{i_1}^{k_1}\dots f_{i_t}^{k_t}(b_0)\]
where $i_{k+1}=i_k+1\in\Z/2\Z$ and $k_1>\dots>k_t$.
\end{prop}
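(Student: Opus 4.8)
The goal is to characterize those $b \in \mathcal B(\infty)$ for which $P_b$ is a top polytope. Recall that a top polytope has $\lambda = \bar\lambda = 0$ (no decoration) and $a_\bullet = \bar a_\bullet = 0$ except possibly $a_1$ or $\bar a_1$; in particular its right Lusztig datum is $(a_1, 0, 0)$ with $a_k = 0$ for $k \geq 2$, and likewise for the left. The plan is to translate the combinatorial condition on the Lusztig datum into a condition on the crystal operators via the crystal structure introduced in \S4.1, where $f_1$ adds $1$ to $a_1$ and $f_0$ adds $1$ to $\bar a_1$, and to use Proposition~\ref{actdiag} together with the inductive structure of MV polytopes.

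**The forward direction.** Suppose $P_b \in MV^t$. I would argue by induction on the weight $|b|$ (the total number of applications of lowering operators needed to reach $b$ from $b_0$). If $b = b_0$, the statement is vacuous (empty product). Otherwise, by \cite[Theorem 4.5]{BDKT12} and the explicit crystal structure, $b$ lies below $b_0$, so at least one of $a_1, \bar a_1$ is nonzero. Say $a_1 \neq 0$; then $b = f_1^{a_1}(b')$ where $b'$ has right datum with $a_1 = 0$. The key point is that $P_{b'}$ is again a top polytope: removing the $a_1$-edge from the bottom of $P_b$ (this is legitimate since $f_1$ only modifies $a_1$) yields the MV polytope $P_{b'}$, and it still has $\lambda = \bar\lambda = 0$ and all other entries zero. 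But if $b'$ still has $a_1 = 0$ and is a top polytope with $|b'| < |b|$, applying the induction hypothesis gives $b' = f_{i_1}^{k_1}\cdots f_{i_t}^{k_t}(b_0)$ with the $i_j$ alternating and $k_1 > \cdots > k_t$; I then need to check that since $b'$ has right datum $a_1 = 0$, its first operator must be $f_0$, i.e. $i_1 = 0$, so that prepending $f_1^{a_1}$ gives a strictly decreasing sequence $a_1 = k_0 > k_1 > \cdots$ with correctly alternating indices. The strict inequality $a_1 > k_1$ is the subtle part: it should follow from the defining inequalities \ref{basseq}--\ref{hauteq} in Definition~\ref{affmvdef}, specifically the condition at $k = 2$ forcing the "next" edge length to be strictly smaller once decorations vanish — this is where I expect to spend the most effort.

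**The reverse direction.** Conversely, given $b = f_{i_1}^{k_1}\cdots f_{i_t}^{k_t}(b_0)$ with alternating indices and $k_1 > \cdots > k_t > 0$, I would show directly that the resulting Lusztig datum has $\lambda = \bar\lambda = 0$ and $a_\bullet, \bar a_\bullet$ supported only at $1$. Here I would use Proposition~\ref{actdiag}: each successive application $f_{i_j}^{k_j}$ glues a "top-polytope-like" slab along an active diagonal, and the strict decrease $k_j > k_{j+1}$ ensures that the gluing never forces a higher-index edge $a_k$ ($k \geq 2$) or a decoration to become nonzero — the relevant active diagonal at each stage stays "tight" precisely because of the inequality. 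One clean way to organize this: show by induction that $b = f_{i_1}^{k_1}\cdots f_{i_t}^{k_t}(b_0)$ with the stated constraints has $P_b$ equal to the polytope obtained by stacking top polytopes, and that the stacking is consistent with Definition~\ref{affmvdef} exactly when the exponents strictly decrease. The main obstacle throughout is verifying the strict-decrease bookkeeping against the somewhat delicate inequalities of Definition~\ref{affmvdef}; the crystal-combinatorial translation itself is routine once the dictionary $f_1 \leftrightarrow a_1{+}{=}1$, $f_0 \leftrightarrow \bar a_1 {+}{=}1$ is in hand.
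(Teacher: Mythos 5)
The skeleton you set up (induction on the weight, peeling off $f_1^{a_1}$, reducing everything to a strict inequality between consecutive exponents) matches the shape of the paper's argument, but the two points you leave open are exactly the content of the proof, and one of your supporting claims is false. The assertion that ``removing the $a_1$-edge from the bottom of $P_b$ \dots yields the MV polytope $P_{b'}$, and it still has $\lambda=\bar\lambda=0$ and all other entries zero'' does not hold: the operator $e_1$ acts simply only on the \emph{right} Lusztig datum (subtracting from $a_1$); the \emph{left} datum of $P_{b'}=P_{e_1^{N}(b)}$ is then redetermined by the uniqueness theorem \cite[Theorem 3.11]{BDKT12} and changes drastically. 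In particular $\bar a_1(b')=\epsilon_0(b')=N'$ is in general nonzero, and the upper-left edges are shifted (in the paper, from $p_j(\qa_1+k_j\delta)$ to $p_j(\qa_1+(k_j-2)\delta)$), so $P_{b'}$ is not obtained from $P_b$ by deleting an edge; the quantity $N'=\epsilon_0(e_1^N(b))$ that must be compared with $N$ is precisely the output of this reorganization. The strict inequality $N'<N$, which you defer (``this is where I expect to spend the most effort''), is the heart of the matter, and it does not follow from the condition at $k=2$ in Definition~\ref{affmvdef} alone: the paper proves it by fixing a complete system of active diagonals, invoking \cite[Proposition 4.10]{BDKT12} to cut down to the subpolytope between the lowest diagonal $N\qa_1$ and the lowest $\qa_0$-directed active diagonal $n\qa_0$, computing the effect of $e_1^N$ on that piece, and then writing the residual vector $v=g\qa_1+h\delta$ with $g>0$ to get $N=N'+g>N'$. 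Nothing in your proposal supplies an argument of this kind.

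The reverse direction has the same status: the picture of ``stacking top-polytope-like slabs along active diagonals'' is the right intuition, but you give no mechanism ensuring that $f_1^{k_0}$ applied to an element of the prescribed form again has all higher $a_k$, $\bar a_k$ and both decorations zero --- since $f_1$ only prescribes the new right datum, you must rule out the left side acquiring, say, a bottom edge $N''\qa_0$ with $N''\ge N$. The paper does this by contradiction: if that configuration occurred, cutting along the two $\qa_0$-directed diagonals (Proposition~\ref{actdiag}) would produce two MV polytopes with the same right Lusztig datum but different left ones, contradicting \cite[Theorem 3.11]{BDKT12}. So while your outline coincides with the paper's, both the forward strict inequality and the stability of the top form under $f_1^{N}$ with $N>N'$ are genuine gaps, and the claim that the left datum is unchanged under $e_1^{\max}$ is an error that must be removed before either gap can be filled.
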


\begin{proof} 
Assume that $\lambda=\bar\lambda=\bar a_\bullet=a_{\ge2}=0$ and $a_1=N>0$ so that $\epsilon_1(b)=N$. By induction on the weight (that is on the height $=(-,\omega_0+\omega_1)$), we need to prove that $\epsilon_0(e_1^N(b))=N'<N$ since clearly $P_{e_1^N(b)}\in MV^t$. Consider a complete system of active diagonals for $P_b$ and assume that it contains one directed by $\qa_0$. Thanks to~\cite[Proposition 4.10]{BDKT12}, we can assume that our polytope is the one contained between the lowest diagonal $N\alpha_1$ and the lower active diagonal $n\qa_0$ directed by $\qa_0$:\begin{equation}\label{polN}
\begin{tikzpicture}[scale=0.85,baseline=(current  bounding  box.center)]
\draw(0,0) node[left]{\small$\bar\mu^\infty=\dots=\bar\mu^{k_s+1}$};
\draw(0.5,2.3) node[left]{\small$\bar\mu^{k_s}=\dots=\bar\mu^{k_{s-1}+1}$};
\draw(1.1,4.6)node[left]{\small$\bar\mu^{k_{r+2}}=\dots=\bar\mu^{k_{r+1}+1}$};
\draw(2,6.6)node[left]{\small$\bar\mu^{k_{r+1}}=\dots=\bar\mu^{k_{r}+1}$};
\draw(4,8.2)node[above]{\small$\bar\mu^{k_r-1}$};
\draw(10,2) node[right]{\small$\mu^{k_s}=\dots=\mu^\infty$};
\draw(9.25,4.05) node[right]{\small$\mu^{k_{s-1}}=\dots=\mu^{k_s-1}$};
\draw(8.25,6.03) node[right]{\small$\mu^{k_{r+1}}=\dots=\mu^{k_{r+2}-1}$};
\draw(7,7.6) node[right]{~\small$\mu^{k_{r}}=\dots=\mu^{k_{r+1}-1}$};
\draw(0.5,2.3)--(0,0)--(10,2)node [midway,below]{\small$N\qa_1$};
\draw(0.5,2.3)(0,0)(10,2)(9.25,4.05)--(0.5,2.3);
\draw(0.5,2.3)--(0,0)node[midway,left]{\small$p_s(\qa_1+k_s\delta)$};
\draw(10,2)--(9.25,4.05)node[midway,right]{\small$p_s(\qa_0+(k_s-1)\delta)$};
\draw[dashed](0.5,2.3)--(1.1,4.6)  (8.25,6.03)--(9.25,4.05);
\draw (1.1,4.6)(2,6.6)--(7,7.6)(8.25,6.03)--(1.1,4.6);
\draw (1.1,4.6)--(2,6.6)node[midway,left]{\small$p_{r+1}(\qa_1+k_{r+1}\delta)$};
\draw  (7,7.6)--(8.25,6.03)node[midway,right]{~\small$p_{r+1}(\qa_0+(k_{r+1}-1)\delta)$};
\draw (7,7.6)--(4,8.2) node [midway,above]{\small$n\qa_0$};
\draw[dashed] (4,8.2)--(2,6.6)node [midway,above]{\small$v$};
\end{tikzpicture}
\end{equation}
where $v=g\alpha_1+h\delta$ for some $g>0$, $h\ge0$.
Using~\cite[Proposition 4.10]{BDKT12}, applying $e_1^N$ yields
\begin{equation}\label{polN'}
\begin{tikzpicture}[scale=0.9,baseline=(current  bounding  box.center)]
\draw(1,3.8) node[left]{\small$\bar\mu^\infty=\dots=\bar\mu^{k_s-1}$};
\draw(1.5,5.6) node[left]{\small$\bar\mu^{k_s-2}=\dots=\bar\mu^{k_{s-1}-1}$};
\draw(2.5,7.18)node[left]{\small$\bar\mu^{k_{r+2}-2}=\dots=\bar\mu^{k_{r+1}-1}$};
\draw(4,8.2)node[above]{\small$\bar\mu^{k_{r+1}-2}=\dots=\bar\mu^{k_{r}-1}$};
\draw(10,2) node[right]{\small$\mu^{k_s}=\dots=\mu^\infty$};
\draw(9.25,4.05) node[right]{\small$\mu^{k_{s-1}}=\dots=\mu^{k_s-1}$};
\draw(8.25,6.03) node[right]{\small$\mu^{k_{r+1}}=\dots=\mu^{k_{r+2}-1}$};
\draw(7,7.6) node[right]{~\small$\mu^{k_{r}}=\dots=\mu^{k_{r+1}-1}$};
\draw(10,2)--(9.25,4.05)node[midway,right]{\small$p_s(\qa_0+(k_s-1)\delta)$};
\draw[dashed](8.25,6.03)--(9.25,4.05);
\draw  (7,7.6)--(8.25,6.03)node[midway,right]{~\small$p_{r+1}(\qa_0+(k_{r+1}-1)\delta)$};
\draw (7,7.6)--(4,8.2) node [midway,above]{\small$n\qa_0$};
\draw (8.25,6.03)--(2.5,7.18);
\draw(4,8.2)--(2.5,7.18)node[midway,left]{\small$p_{r+1}(\qa_1+(k_{r+1}-2)\delta)$~~};
\draw [dashed](2.5,7.18)--(1.5,5.6);
\draw(9.25,4.05)--(1.5,5.6);
\draw(10,2) --(1,3.8)node [midway,below]{\small$N'\qa_0$};
\draw(1.5,5.6) --(1,3.8)node[midway,left]{\small$p_s(\qa_1+(k_s-2)\delta)$};
\end{tikzpicture}
\end{equation}
and we want to prove that $N'=\epsilon_0(e^N_1(b))<N$. But we have\begin{align*}
N'\qa_0&=n\qa_0+\underbrace{\sum_{j=r+1}^sp_j(\qa_0+(k_j-1)\delta)}_{=:u}-\sum_{j=r+1}^sp_j(\qa_1+(k_j-2)\delta)\\
&=\underbrace{n\qa_0+u-\sum_{j=r+1}^sp_j(\qa_1+k_j\delta)}_{=v-N\qa_1}+\underbrace{(\sum_{j=r+1}^s2k_j)}_{=:d}\delta
\end{align*}
where we recall that $v=g\alpha_1+h\delta$ for some $g>0$, $h\ge0$. Thus $N'=d+h$ and $N=d+g+h=N'+g>N'$ as expected. Computations are similar if all diagonals are directed by $\qa_1$. We deal symmetrically with the case $\lambda=\bar\lambda= a_\bullet=\bar a_{\ge2}=0$ and $\bar a_1>0$.

We also prove the other way around by induction. Assume that $P_b$ looks like the previous figure~\ref{polN'}, with $\bar a_1=N'$ (once again, the case $a_1\neq0$ is dealt with similarly). We want to prove that if $N>N'$, $P_{f_1^N(b)}$ satisfies $\lambda=\bar\lambda=\bar a_\bullet=a_{\ge2}=0$ and $a_1=N>0$.
If not, it looks like
\[
\begin{tikzpicture}[scale=0.9]
\draw(10,2) node[right]{\small$\mu^{k_s}=\dots=\mu^\infty$};
\draw(9.25,4.05) node[right]{\small$\mu^{k_{s-1}}=\dots=\mu^{k_s-1}$};
\draw(8.25,6.03) node[right]{\small$\mu^{k_{r+1}}=\dots=\mu^{k_{r+2}-1}$};
\draw(7,7.6) node[right]{~\small$\mu^{k_{r}}=\dots=\mu^{k_{r+1}-1}$};
\draw(10,2)--(9.25,4.05)node[midway,right]{\small$p_s(\qa_0+(k_s-1)\delta)$};
\draw[dashed](8.25,6.03)--(9.25,4.05);
\draw  (7,7.6)--(8.25,6.03)node[midway,right]{~\small$p_{r+1}(\qa_0+(k_{r+1}-1)\delta)$};
\draw (7,7.6)--(4,8.2) node [midway,above]{\small$n\qa_0$};
\draw(10,2) --(-1,4.2)node [midway,below]{\small$N''\qa_0$}node[left]{\small$\bar\mu^\infty$};
\draw(10,2)--(0,0)node [midway,below]{\small$N\qa_1$}node[below]{\small$\mu_0$};
\draw[dashed](0,0)to[bend left](-1,4.2)to[bend left](4,8.2);
\end{tikzpicture}
\]
with $N''\ge N$. But then the MV polytope between the two diagonals oriented by $\qa_0$ has the same right Lusztig datum than $P_b$ represented by~\ref{polN'}, but a different left one as $N''>N'$, which is absurd thanks to~\cite[Theorem 3.11]{BDKT12}.
\end{proof}

\begin{lemm}\label{lzzz}
Assume that $b=f_{i_1}^{k_1}\dots f_{i_t}^{k_t}(b_0)$
where $i_{k+1}=i_k+1\in\Z/2\Z$ and $k_1>\dots>k_t$, and $i_1=1$.
Then for every $K\le k_1$, we have $\epsilon_0(e_1^K(b))<K$.
\end{lemm}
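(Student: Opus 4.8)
The plan is to use Proposition~\ref{toponly} to describe $b$ and the source of its $\alpha_1$-string, reduce the assertion to a single estimate on the length of the bottom $\alpha_0$-edge, and prove that estimate by tracking the action of $\tilde f_1$ on affine MV polytopes. Throughout, $K$ is a positive integer (for $K=0$ the inequality would read $\epsilon_0(b)<0$, which is absurd, so $K\ge 1$ is understood); I use that $\epsilon_i(P)$ is the length of the bottom $\alpha_i$-edge of an MV polytope $P$, i.e.\ $\epsilon_1(P)=a_1(P)$ and $\epsilon_0(P)=\bar a_1(P)$, which is part of the crystal structure of~\cite[Theorem 4.5]{BDKT12}, together with the general fact that $\epsilon_i(f_ix)=\epsilon_i(x)+1$.

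First I would fix the structure. Since $P_b$ is a top polytope by Proposition~\ref{toponly} and a top polytope has at most one of $a_1,\bar a_1$ nonzero, while $f_1$ raises $a_1$, we get $\epsilon_1(b)=a_1(b)=k_1$ and $\epsilon_0(b)=\bar a_1(b)=0$; running the same argument recursively along the word shows $\epsilon_0\big(f_0^{k_2}\cdots f_{i_t}^{k_t}(b_0)\big)=k_2$ and $\epsilon_1\big(f_0^{k_2}\cdots f_{i_t}^{k_t}(b_0)\big)=0$. In particular $c:=e_1^{k_1}(b)=f_0^{k_2}\cdots f_{i_t}^{k_t}(b_0)$ (read $c=b_0$ if $t=1$) is the source of the $\alpha_1$-string through $b$, so $e_1^{K}(b)=f_1^{\,k_1-K}(c)$ for every $0\le K\le k_1$, with $\epsilon_0(c)=k_2<k_1$ (resp.\ $\epsilon_0(c)=0$ if $t=1$).

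Set $m:=k_1-K\in\{0,1,\dots,k_1-1\}$; the inequality to prove is $\epsilon_0\big(f_1^{m}(c)\big)<k_1-m$. Two cases are immediate from Proposition~\ref{toponly}: for $m=0$ we have $\epsilon_0(c)=k_2<k_1$; and for $m>k_2$ the word $f_1^{m}f_0^{k_2}f_1^{k_3}\cdots(b_0)$ still has strictly decreasing exponents and leading index $1$, so $f_1^{m}(c)$ is a top polytope with $\epsilon_0=0<k_1-m$ (recall $m\le k_1-1$). This settles $t=1$ completely. There remains the range $1\le m\le k_2$ (forcing $t\ge 2$), where I claim
\[
\epsilon_0\big(f_1^{m}(c)\big)\ \le\ k_2-m .
\]
Granting this, $\epsilon_0\big(f_1^{m}(c)\big)\le k_2-m\le(k_1-1)-m<k_1-m=K$, which finishes the proof.

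The claim I would establish by induction on $m$ (base case $m=0$: $\epsilon_0(c)=k_2$), using the description of $\tilde f_1$ on affine MV polytopes in~\cite[Proposition 4.10]{BDKT12}. Since $\epsilon_1(c)=0$, the polytope $f_1^{m}(c)$ carries a bottom $\alpha_1$-edge of length $m$; applying $\tilde f_1$ once more lengthens that edge by one and reflects the face immediately to its left across the wall supporting the edge. Because $c$ arises from the strictly decreasing word $f_0^{k_2}f_1^{k_3}\cdots(b_0)$, that left-hand face is directed by $\alpha_0$ plus a nonnegative multiple of $\delta$, so the reflection strictly shortens the bottom $\alpha_0$-edge whenever it is nonzero; hence $\epsilon_0\big(f_1^{m+1}(c)\big)\le\epsilon_0\big(f_1^{m}(c)\big)-1$ for $m<k_2$, which gives the claim. (Equivalently one may translate this into Littelmann's language --- see~\S\ref{suse:Paths} --- applying $e_{\alpha_1}^{m}$ to the LS path attached to $c$ and reading $\epsilon_0$ off as $-\min_t\alpha_0(\cdot)$.) I expect this inductive step to be the main obstacle: for $m\ge 1$ the polytope $f_1^{m}(c)$ is no longer a top polytope --- both of its bottom edges may be nonzero --- so Proposition~\ref{toponly} no longer applies and one has to control the local geometry at its bottom-left vertex by hand; the hypothesis $k_1>k_2>\dots>k_t$ is precisely what guarantees that the adjacent face never turns the wrong way and lets the $\alpha_0$-edge keep shrinking.
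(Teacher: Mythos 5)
Your reduction is the same as the paper's: you identify $c=e_1^{k_1}(b)=f_0^{k_2}\cdots f_{i_t}^{k_t}(b_0)$ as the source of the $\alpha_1$-string, write $e_1^K(b)=f_1^{m}(c)$ with $m=k_1-K$, and observe that for $m=0$ or $m>k_2$ Proposition~\ref{toponly} disposes of the claim immediately. The paper groups the cases as $K<k_1-k_2$, $K=k_1-k_2$, $K>k_1-k_2$ and settles the last one by exhibiting $P_{e_1^K(b)}$ as the polytope built from~(\ref{polN'}) with a bottom edge $h\alpha_1$, $h=k_1-K$, then reading $\epsilon_0<k_2-h$ off convexity; this is one picture, no induction.

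Where you deviate is the remaining range $1\le m\le k_2$, which you want to handle by induction on $m$ with the step $\epsilon_0(f_1^{m+1}(c))\le\epsilon_0(f_1^{m}(c))-1$. Two issues. First, as you already flag, the step itself is only sketched: that $\tilde f_1$ on an affine MV polytope ``reflects the face immediately to its left'' is not what~\cite[Proposition 4.10]{BDKT12} literally says (the operator modifies $a_1$ in the right Lusztig datum and the left datum is then recomputed globally via~\cite[Theorem 3.11]{BDKT12}); turning this into the claimed one-step drop of $\bar a_1$ requires exactly the local convexity analysis the paper carries out in the figure, so you have not really avoided it. Second, and more concretely, your inductive step as stated is false when $\epsilon_0(f_1^{m}(c))=0$ for some $m<k_2$, which does occur (already for $c=f_0^{k_2}(b_0)$ one finds $\epsilon_0(f_1^{m}(c))=\max(0,k_2-2m)$, hitting $0$ well before $m=k_2$); at that point ``$\le\epsilon_0(f_1^{m}(c))-1=-1$'' is absurd. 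You would need to argue separately that once $\epsilon_0$ vanishes it stays zero on the rest of the $\alpha_1$-string up to $m=k_2$ — true, but not a consequence of the ``reflection strictly shortens'' heuristic. Patching both points essentially forces you back to the single-picture argument of the paper, so I would not count this as a genuinely different route; it is the same reduction with an incomplete replacement for the final geometric step.
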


\begin{proof}
We can assume that $P_b$ looks like~\ref{polN}, with $N=k_1$. Then $P_{e_1^N(b)}$ looks like~\ref{polN'} with $N'=k_2$. If $K<N-N'$, we have $b'=e_1^K(b)=f_1^{N-K}f_0^{N'}\dots  f_{i_t}^{k_t}(b_0)$ with $N-K>N'$. Thus satisfies the conditions of~\ref{toponly} and a fortiori $\epsilon_0(b')=0$. If $K=N-N'$ then $P_{b'}$ looks like
\begin{equation*}
\begin{tikzpicture}[scale=0.9,baseline=(current  bounding  box.center)]
\draw(1,3.8) node[left]{\small$\bar\mu^\infty$};
\draw(10,2) node[right]{\small$\mu^\infty$};
\draw(10,2)--(9.25,4.05)node[midway,right]{};
\draw[dashed](8.25,6.03)--(9.25,4.05);
\draw  (7,7.6)--(8.25,6.03)node[midway,right]{};
\draw (7,7.6)--(4,8.2) node [midway,above]{};
\draw(5.5,5)node{(\ref{polN'})};
\draw(4,8.2)--(2.5,7.18)node[midway,left]{};
\draw [dashed](2.5,7.18)--(1.5,5.6);
\draw(10,2) --(1,3.8)node [midway,below]{\small$N'\qa_0$};
\draw(1.5,5.6) --(1,3.8)node[midway,left]{};
\draw(10,2)--(1,0.2)node [midway,below]{\small$N'\qa_1$}node[below]{\small$\mu_0$};
\draw(1,3.8)--(1,0.2)node[midway,left]{\small$\bar\lambda=(N')$};
\end{tikzpicture}
\end{equation*}
thus again $\epsilon_0(b')=0$. Finally assume that $K>N-N'$ and set $h=N-K$. Then $P_{b'}$ looks like
\begin{equation*}
\begin{tikzpicture}[scale=0.9,baseline=(current  bounding  box.center)]
\draw(1,3.8) node[left]{\small$\bar\mu^\infty=\bar\mu_\infty$};
\draw(10,2) node[right]{\small$\mu_1=\dots=\mu_\infty=\mu^\infty$};
\draw(10,2)--(9.25,4.05)node[midway,right]{};
\draw[dashed](8.25,6.03)--(9.25,4.05);
\draw  (7,7.6)--(8.25,6.03)node[midway,right]{};
\draw (7,7.6)--(4,8.2) node [midway,above]{};
\draw(5.5,5)node{(\ref{polN'})};
\draw(4,8.2)--(2.5,7.18)node[midway,left]{};
\draw [dashed](2.5,7.18)--(1.5,5.6);
\draw(10,2) --(1,3.8)node [midway,below]{\small$N'\qa_0$};
\draw(1.5,5.6) --(1,3.8)node[midway,left]{};
\draw(10,2)--(5,1)node [midway,below]{\small$h\qa_1$}node[below]{\small$\mu_0$};
\draw[dashed](5,1)to[bend left](1,3.8);
\end{tikzpicture}
\end{equation*}
where we necessarily have $\epsilon_0<N'-h=N'-N+K<K$ as wished.
\end{proof}

\subsection{$\delta$-top polytopes}

\begin{defi}
We call \emph{$\delta$-top polytope} an MV polytope $P$ such that $a_\bullet=\bar a_\bullet=0$ except $a_1$ or $\bar a_1$.
We denote by $MV^{\delta-t}$ the set of $\delta$-top polytopes.
\end{defi}

\begin{prop}\label{dtoponly}
Consider $b\in \mathcal B(\infty)$ and its MV polytope $P_b$.We have $P_b\in MV^{\delta-t}$ if and only if \[
b=f_{i_1}^{k_1}\dots f_{i_t}^{k_t}(b_0)\]
where $i_{k+1}=i_k+1\in\Z/2\Z$ and $k_1\ge\dots\ge k_t$.
\end{prop}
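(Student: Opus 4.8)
The plan is to argue by induction on $t$ (equivalently on the height $(-,\omega_0+\omega_1)$ of the weight of $b$), following closely the scheme of Proposition~\ref{toponly}; the only genuinely new feature is that the diagonals appearing in the intermediate pictures~\eqref{polN}--\eqref{polN'} may now be purely imaginary, i.e.\ directed by $\delta$, which is exactly what happens at an index $j$ with $k_j=k_{j+1}$ and what forces a part of size $k_j$ to be appended to one of the decorating partitions. It is convenient to prove, by the same induction, the sharper statement that under the stated hypotheses the polytope $P_b$ has, whenever $t\ge1$, its unique nonzero bottom Lusztig coordinate (i.e.\ the only nonzero entry among $a_1,\bar a_1$, all $a_j,\bar a_j$ with $j\ge2$ being zero) on the side corresponding to $i_1$ and equal to $k_1$; the base case $t=0$ is the point $P_{b_0}$.

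For the forward implication, write $b=f_{i_1}^{k_1}(b')$ with $b'=f_{i_2}^{k_2}\cdots f_{i_t}^{k_t}(b_0)$ and, up to exchanging $\alpha_0$ and $\alpha_1$, assume $i_1=1$, so $i_2=0$. By induction $P_{b'}\in MV^{\delta-t}$ with active bottom coordinate $\bar a_1(b')$, and in fact $\bar a_1(b')=k_2\le k_1$: indeed $b'=f_0^{k_2}(b'')$ with $b''=f_1^{k_3}\cdots(b_0)$ (or $b''=b_0$), so $\bar a_1(b'')=0$ by induction and hence $\bar a_1(b')=\bar a_1(b'')+k_2=k_2$. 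One then analyses $P_b=f_1^{k_1}(P_{b'})$ by invoking~\cite[Proposition~4.10]{BDKT12}, exactly as in the proof of Proposition~\ref{toponly}: applying $f_1^{k_1}$ to a $\delta$-top polytope whose active bottom edge is $\bar a_1(b')\,\alpha_0$, with $\bar a_1(b')\le k_1$, keeps all coordinates $a_j,\bar a_j$ with $j\ge2$ equal to zero, replaces the bottom by the single edge $k_1\,\alpha_1$ on the opposite side, and only enlarges the partitions, a part of size $k_1$ being appended to $\bar\lambda$ precisely when $k_1=k_2$. Hence $P_b\in MV^{\delta-t}$ with active side $1$ and $a_1=k_1$, which completes the step.

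For the converse, let $P_b\in MV^{\delta-t}$ with $b\ne b_0$; up to the symmetry $\alpha_0\leftrightarrow\alpha_1$ assume the active bottom coordinate is $a_1=N>0$, so $\epsilon_1(b)=N$, and set $b'=e_1^N(b)$. Reading~\eqref{polN}--\eqref{polN'} backwards and using~\cite[Proposition~4.10]{BDKT12} again, $e_1^N$ deletes the bottom edge $N\,\alpha_1$ and reorganises $P_b$ into a $\delta$-top polytope $P_{b'}$ whose active bottom coordinate $\bar a_1(b')$ now lies on the opposite side and satisfies $\bar a_1(b')\le N$; this bound is the $\delta$-top counterpart of Lemma~\ref{lzzz} (the strict inequality there weakening to $\le$) and ultimately rests on condition~\ref{basseq} and on the inequality $\lambda_1,\bar\lambda_1\le(\mu_\infty-\bar\mu_\infty,\alpha_1)/2$ of Definition~\ref{affmvdef}. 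By the inductive hypothesis $b'=f_0^{k_2}f_1^{k_3}\cdots f_{i_t}^{k_t}(b_0)$ with $k_2\ge\cdots\ge k_t$ and $k_2=\bar a_1(b')\le N$, so $b=f_1^N(b')$ has the required form with $N\ge k_2\ge\cdots\ge k_t$.

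The crux, common to both directions, is the precise description of how $f_1^{k_1}$, respectively $e_1^N$, acts on a $\delta$-top polytope: one must follow, diagonal by diagonal as in the proof of Proposition~\ref{toponly}, how picture~\eqref{polN} passes to~\eqref{polN'} when the vector $v$ there is allowed to be purely imaginary, and check that this keeps the bottom reduced to a single edge while appending to $\lambda$ or $\bar\lambda$ exactly the parts dictated by the equalities among the $k_j$. The remaining ingredients — the reduction via Proposition~\ref{actdiag} and the uniqueness statement~\cite[Theorem~3.11]{BDKT12} — are routine adaptations of the arguments already given for Proposition~\ref{toponly}, so I expect the proof to be short once this bookkeeping is in place.
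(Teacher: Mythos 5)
Your strategy (induction on the weight, peeling off $f_{i_1}^{k_1}$, resp.\ $e_1^{\epsilon_1}$, while tracking the bottom Lusztig coordinates) is reasonable, and you attempt both implications, whereas the paper's written proof only treats the direction $P_b\in MV^{\delta-t}\Rightarrow b=f_{i_1}^{k_1}\cdots f_{i_t}^{k_t}(b_0)$. But there is a genuine gap: the two statements on which every one of your inductive steps rests, namely (a) if $P'$ is $\delta$-top with unique nonzero bottom coordinate $\bar a_1(P')=k_2\le k_1$, then $f_1^{k_1}(P')$ is again $\delta$-top, and (b) if $P$ is $\delta$-top with $a_1=N>0$, then $e_1^{N}(P)$ is again $\delta$-top with $\bar a_1(e_1^{N}(P))\le N$, are essentially equivalent to the two implications of the proposition itself, and you do not prove them: you assert that they follow ``exactly as in the proof of Proposition~\ref{toponly}'' via \cite[Proposition 4.10]{BDKT12}, and you yourself flag the verification as remaining bookkeeping. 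The difficulty is that the features distinguishing~\ref{dtoponly} from~\ref{toponly} are exactly the ones this bookkeeping must handle: active diagonals in the imaginary direction, the interaction with the decorations through conditions (iii)--(iv) of Definition~\ref{affmvdef}, and the weakening of the strict inequality of Lemma~\ref{lzzz} to $\le$. Your summary of the effect on decorations is also imprecise (for instance with $k_1>k_2$ the left partition of $f_1^{k_1}(P')$ can still acquire parts, just not a part of size $k_1$), which is harmless only because your sharper inductive statement ignores the partitions --- but then the claim $\bar a_{\ge 2}=0$ after applying $f_1^{k_1}$ or $e_1^N$, which is the whole point, is left unestablished.

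For comparison, the paper does not redo the computation of pictures~\eqref{polN}--\eqref{polN'} in the $\delta$-top setting at all. It descends by a case analysis on the decorations: either $\bar\lambda\neq\lambda$, in which case one cuts along the lowest active diagonal (Proposition~\ref{actdiag}) and applies the induction hypothesis to the upper piece; or $\bar\lambda=\lambda$ with $a_1=N$, split into $N=\lambda_1$ and $N>\lambda_1$, where the structure of $e_1^N(P)$ is controlled through its decorations and the crucial inequality $N>\epsilon_0(e_1^N(P))$ is obtained by applying Proposition~\ref{toponly} to the top part $P'^t$ together with the uniqueness statement \cite[Theorem 3.11]{BDKT12}. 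So the partitions, which your sketch treats as a side remark, are precisely the bookkeeping device that makes the paper's induction close. To complete your route you would have to state and prove (a) and (b) --- i.e.\ describe exactly how the left datum of a $\delta$-top polytope is rebuilt after $f_1^{k_1}$ or $e_1^{N}$, and why conditions (iii)--(iv) force $\bar a_{\ge2}=0$ and the bound $\bar a_1\le N$ --- and as it stands this central step is missing rather than routine.
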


\begin{proof} 
We use induction on the weight.
Consider $P\in MV^{\delta-t}$. First assume that $\bar\lambda\neq\lambda$, say for instance $\lambda=N\cup\bar\lambda$. Then $P=f_0^N(P')$ where $P'$ is simply the part of $P$ which is above the bottom $\qa_1$-active diagonal. We conclude by induction hypothesis since $\epsilon_1(P')=N$. Now assume that $\bar\lambda=\lambda$ in $P$, with, say, $a_1=N\neq0$. If $N=\lambda_1$, then $e_1^N(P)=P'$ with $(P')^t=P^t$, $\lambda'=\lambda$ and $\bar\lambda'=\lambda\setminus N$. We conclude by induction hypothesis. Otherwise, $N>\lambda_1$. Write $P'=e_1^N(P)$. If $\bar\lambda'\neq\lambda'=\lambda$, we have $\epsilon_0(P')=\lambda_1<N$ and we conclude by induction hypothesis. If $\bar\lambda'=\lambda'=\lambda$, we have $f_1^N(P'^t)=P^t$ by~\cite[Theorem 3.11]{BDKT12}. By~\ref{toponly}, we necessarily have $N>\epsilon_0(P'^t)=\epsilon_0(P')$ and we can again conclude by induction hypothesis.
\end{proof}

\subsection{General polytopes}

Thanks to~\ref{actdiag}, there is a natural map $MV\to MV^{\delta-t}$, $P\mapsto P^{\delta-t}$ which is the removal of the part of a  polytope which is below the active diagonal $[\bar\mu_\infty,\mu_\infty]$.

\begin{prop}\label{genpol}
For every $P\in MV$ there exist $k_1,\dots,k_r >0$ and $h\ge0$ such that $f^h_{i_0}e^{k_1}_{i_1}\dots e^{k_r}_{i_r}(P)=P^{\delta-t}$, where $i_{j+1}=i_j+1\in\Z/2\Z$.
\end{prop}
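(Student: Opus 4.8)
The plan is to reduce a general MV polytope $P$ to its $\delta$-top part $P^{\delta-t}$ by peeling off, from the bottom, the contributions of the lower halves of the left and right Lusztig data, using induction on the height $(-,\omega_0+\omega_1)$ of the weight. First I would dispose of the case where $P$ has strictly more ``bottom'' structure than top structure: if the partitions $\lambda,\bar\lambda$ decorating $P$ are not equal, then by Definition~\ref{affmvdef}\ref{basseq} and Proposition~\ref{actdiag}, the portion of $P$ below the bottom active diagonal $[\bar\mu_\infty,\mu_\infty]$ is itself an MV polytope, and one side's bottom edge from $\mu_0$ (or $\bar\mu_0$) is a single long edge of the form $h\qa_1$ (or $h\qa_0$); applying the appropriate $e_i^h$ removes this edge and strictly decreases the height, so we conclude by induction. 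More precisely, the natural candidate is: among the two lower corners, pick the one whose incident bottom edge is directed purely by a simple root (this is forced by the equality condition in \ref{basseq}), apply the corresponding lowering-in-reverse operator $e_{i}^{k}$ with $k$ the multiplicity of that edge, and observe via Proposition~\ref{actdiag} that the result is again an MV polytope whose $\delta$-top part coincides with $P^{\delta-t}$.

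The core inductive step mirrors the structure already used in the proofs of Proposition~\ref{toponly} and Proposition~\ref{dtoponly}: one alternately strips an $\qa_1$-block from the right datum and an $\qa_0$-block from the left datum (or vice versa), each strip being an application of some $e_{i_j}^{k_j}$, until the bottom diagonal $[\bar\mu_\infty,\mu_\infty]$ becomes the actual bottom edge of the polytope — at which point nothing remains below it and the polytope equals $P^{\delta-t}$. The indices automatically alternate, $i_{j+1}=i_j+1$, because consecutive bottom edges of an MV polytope in the $\widehat{\mathfrak{sl}}_2$ picture are directed by $\qa_1+k\delta$ and $\qa_0+k\delta$ alternately (this is exactly the shape recorded in the $\mu_k,\bar\mu_k$ recursion of \S4.1). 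The single trailing $f^h_{i_0}$ in the statement accounts for the possibility that, after removing all of one side's bottom data, the last operation slightly overshoots and must be corrected by one lowering operator on the opposite node — this is the same $h$-versus-$N'$ bookkeeping that appears at the end of Lemma~\ref{lzzz}. I would set up the induction so that at each stage we invoke Proposition~\ref{actdiag} to guarantee that cutting along the current bottom active diagonal yields a genuine MV polytope, and \cite[Proposition 4.10]{BDKT12} to control how $e_1,e_0$ act on the relevant sub-polytopes.

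The main obstacle, as in the earlier propositions, is the bookkeeping at the boundary between the ``bottom'' part being removed and the $\delta$-top part being preserved: one must check that the operators $e_{i_j}^{k_j}$ never disturb anything above the diagonal $[\bar\mu_\infty,\mu_\infty]$, and that the $k_j$ are genuinely positive (so the expression is a legitimate sequence of root operators, not a no-op), and finally that the terminal correction $f^h_{i_0}$ with $h\ge 0$ suffices — i.e.\ the overshoot is at most one block on one node. Here I expect to reuse verbatim the computation in Lemma~\ref{lzzz}: when we apply $e_1^N$ with $N=k_1$ to a polytope shaped like~\ref{polN}, we land on~\ref{polN'} with $N'=k_2$, and the discrepancy is governed by the vector $v=g\alpha_1+h\delta$ with $g>0$, $h\ge0$, which pins down exactly the residual $h\qa_{i_0}$ to be put back. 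Iterating this down the alternating chain of edges and tracking the partition decorations via Definition~\ref{affmvdef}(iii)--(iv) gives the claimed factorization; the uniqueness statement \cite[Theorem 3.11]{BDKT12} is what ensures the intermediate polytopes produced are the expected ones and that the procedure is well-defined regardless of the choices of active diagonals.
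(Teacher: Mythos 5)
Your overall strategy is in the right spirit — peel the bottom of $P$ with raising operators and invoke \cite[Proposition~4.10]{BDKT12} to control what happens — but there are two genuine gaps, and the second one is not just imprecision but an internal inconsistency.

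First, your opening case split on $\bar\lambda\neq\lambda$ does not lead anywhere. The condition $\bar\lambda\neq\lambda$ is a statement about the waist of $P$ (Definition~\ref{affmvdef}(iii)), not about its bottom $\mu_k,\bar\mu_k$-structure; it does not force one side's bottom to collapse to a single simple-root edge from $\mu_0=\bar\mu_0$, nor does the equality clause in \ref{affmvdef}\ref{basseq} distinguish between the two bottom edges, since \emph{both} $[\mu_0,\mu_1]$ and $[\bar\mu_0,\bar\mu_1]$ are always directed by simple roots ($a_1\qa_1$ and $\bar a_1\qa_0$ respectively). So the reduction you describe there is unjustified, and the proposition needs to be proved without it.

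Second and more seriously, the claim at the heart of your inductive step --- that applying $e_i^k$ to strip a bottom edge yields a polytope whose $\delta$-top part \emph{coincides} with $P^{\delta-t}$ --- is false, and the paper's proof hinges precisely on the fact that it fails. What the paper shows is weaker: after peeling one quadrilateral of the stacked bottom with $e_1^se_0^n$, the new $\delta$-top part shares the same left Lusztig datum as $P^{\delta-t}$ \emph{except possibly $\bar a_1$}. It is this drift in $\bar a_1$, accumulated over the whole induction, that the terminal $f_{i_0}^h$ is there to absorb, and \cite[Theorem~3.11]{BDKT12} (uniqueness from the left datum) is what closes the argument. Notice that your write-up simultaneously claims the $e$'s never disturb anything above $[\bar\mu_\infty,\mu_\infty]$ and that a final $f_{i_0}^h$ is needed to correct an ``overshoot''; these two claims contradict each other. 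The paper also inducts on the number $p$ of quadrilaterals in the stacked decomposition of the bottom (obtained from \cite[Proposition~4.10]{BDKT12}), and each inductive step is a compound move $e_1^se_0^n$ removing one quadrilateral, with $s$ determined by where the line through $\bar\mu_1$ in direction $\qa_1$ meets the diagonal $[\bar\mu_{k+1},\mu_k]$ --- the inequality $(\mu_k-\bar\mu_{k-1},\omega_0)\le 0$ from \ref{affmvdef}\ref{basseq} guarantees this intersection exists. That geometric mechanism, together with the precise ``same left datum except $\bar a_1$'' bookkeeping, is what your proposal is missing.
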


\begin{proof}
Consider the case where $\bar a_1\neq 0$ in $P^{\delta-t}$. Thanks again to~\cite[Proposition 4.10]{BDKT12} we may assume that the bottom part of our polytope is a stack of $p$ subpolyoptes of the following type
\begin{equation*}
\begin{tikzpicture}[scale=0.85,baseline=(current  bounding  box.center)]
\draw(0,0) node[left]{\small$\bar\mu_{k+1}$};
\draw(0.5,-2.3) node[left]{\small$\bar\mu_{k}=\dots=\bar\mu_1$};
\draw(10,-2) node[right]{\small$\mu_{k}$};
\draw(9.25,-4.05) node[right]{\small$\mu_0=\dots\mu_{k-1}$};
\draw(0.5,-2.3)--(0,0)--(10,-2)node [midway,below]{\small$$};
\draw(9.25,-4.05)--(0.5,-2.3)node [midway,below]{\small$n\qa_0$};
\draw(0.5,-2.3)--(0,0)node[midway,left]{\small$p(\qa_0+k\delta)$};
\draw(10,-2)--(9.25,-4.05)node[midway,right]{\small$p(\qa_1+(k-1)\delta)$};
\end{tikzpicture}
\end{equation*}
where by definition $(\mu_k-\bar\mu_{k-1},\omega_0)\le0$. Thus the line through $\bar\mu_1$ directed by $\qa_1$ meets the edge $[\bar\mu_{k+1},\mu_k]$:
\begin{equation*}
\begin{tikzpicture}[scale=0.85,baseline=(current  bounding  box.center)]
\draw(0,0) node[left]{\small$\bar\mu_{k+1}$};
\draw(0.5,-2.3) node[left]{\small$\bar\mu_{k}=\dots=\bar\mu_1$};
\draw(10,-2) node[right]{\small$\mu_{k}$};
\draw(10,-2) --(0,0)node [pos=0.7,above]{\small$$};
\draw(6,-1.2) --(0.5,-2.3)node [midway,below]{\small$s\qa_1$};
\draw(9.25,-4.05) node[right]{\small$\mu_0=\dots\mu_{k-1}$};
\draw(0.5,-2.3)--(0,0)node [midway,below]{\small$$};
\draw(9.25,-4.05)--(0.5,-2.3)node [midway,below]{\small$n\qa_0$};
\draw(0.5,-2.3)--(0,0)node[midway,left]{\small$p(\qa_0+k\delta)$};
\draw(10,-2)--(9.25,-4.05)node[midway,right]{\small$p(\qa_1+(k-1)\delta)$};
\end{tikzpicture}
\end{equation*}
for some $s>0$.
But then the bottom part of $e_1^se_0^n(P)$ is a stack of $p-1$ polytopes of the previous type and $e_1^se_0^n(P)^{\delta-t}$ and $P^{\delta-t}$ share the same left Lusztig datum except $\bar a_1$. By induction on $p$, there exist $k_1,\dots,k_r >0$ such that $e^{k_1}_{i_1}\dots e^{k_r}_{i_r}(P)$ and $P^{\delta-t}$ share the same left Lusztig datum except $\bar a_1$ and we can conclude since the left Lusztig datum determines the polytope.
\end{proof}

\begin{exem} If the bottom part of $P$ looks like 
\[
\begin{tikzpicture}[scale=0.85,baseline=(current  bounding  box.center),y=-1cm]
\draw(0,0) node[left]{\small$\bar\mu_\infty$};
\draw(0.5,2.3) node[left]{\small$$};
\draw(1.1,4.6)node[left]{\small$$};
\draw(10,2) node[right]{\small$\mu_\infty$};
\draw(9.25,4.05) node[right]{\small$$};
\draw(8.25,6.03) node[right]{\small$\mu_1$};
\draw(3.25,7.03) node[below]{\small$\mu_0$};
\draw(0.5,2.3)--(0,0)--(10,2)node [pos=0.8,above]{\small$h\qa_0$};
\draw(0.5,2.3)(0,0)(10,2)(9.25,4.05)--(0.5,2.3)node [pos=0.7,above]{\small$k_3\qa_0$};
\draw(0.5,2.3)--(0,0)node[midway,left]{\small$$};
\draw(10,2)--(9.25,4.05)node[midway,right]{\small$$};
\draw[](0.5,2.3)--(1.1,4.6)  (8.25,6.03)--(9.25,4.05);
\draw (1.1,4.6)(2,6.6)(7,7.6)(8.25,6.03)--(1.1,4.6)node[midway,below]{\small$k_1\qa_0$};
\draw (8.25,6.03)--(3.25,7.03) node [midway,below]{\small$p\qa_1$};
\draw (1.1,4.6)--(3.25,7.03) node [midway,above]{\small$$};
\draw (6.6,3.52)--(1.1,4.6) node [midway,above]{\small$k_2\qa_1$};
\draw[dashed] (6.6,3.52)--(7.35,1.47);
\draw (6,1.2)--(0.5,2.3) node [midway,above]{\small$k_4\qa_1$};
\end{tikzpicture}
\]
then $P^{\delta-t}=f_0^he_1^{k_4}e_0^{k_3}e_1^{k_2}e_0^{k_1}e_1^p(P)$. The dashed segment is part of the right side of $e_0^{k_1}e_1^p(P)$.
\end{exem}

\section{Decorations and paths}

In this section we stay in the $\widehat{\mathfrak{sl}}_2$ case. For a given LS path $\pi$ we want to be able to recognize the partitions decorating the associated affine MV polytope. Since we have seen in~\ref{MVretract} that retractions allow one to recover the bottom part of this polytope, we may thanks to~\ref{genpol} stick to the case of paths associated to $\delta$-top polytopes, which we treat in this section.

\begin{defi}\label{defzigzag}
Consider an LS path $\pi$ and $k\in\Z$. An interval $[s,v]\subseteq[0,1]$ is said to be an $\qa_i$-zigzag if there exists $[t,u]\subset[s,v]$ such that $[s,u]$ is an $\qa_i$-stable section at $k$ and $[t,v]$ is an $\qa_{i+1}$-stable section at $-k$. Moreover, we want $\qa_i([s,v])\ge k$ and $\qa_{i+1}([s,v])\ge -k$. 
\end{defi}

\begin{center}
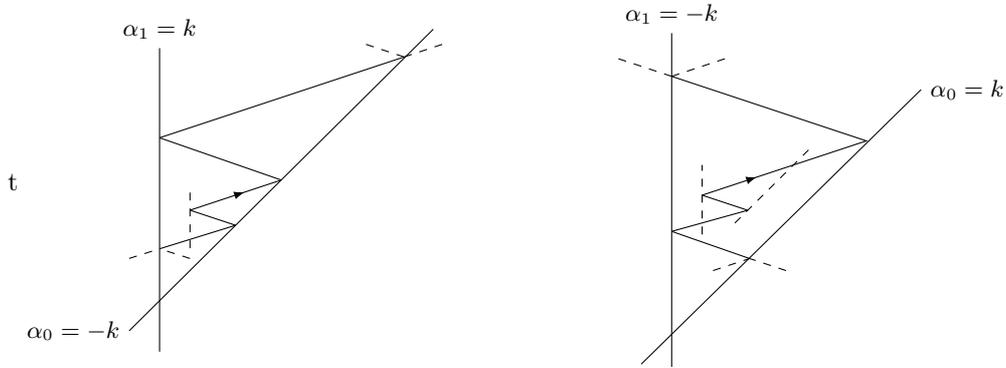

\begin{scaletikzpicturetowidth}{\textwidth}
\begin{tikzpicture}[scale=.8, 
 baseline=(current bounding box.center)]
\node(1)at(-.5,-1){};
\node(2)at(-.5,4.5){\small$\qa_1=k$};
\draw(1)--(2);
\draw[](4,4.5)--(-1,-0.5)node[left]{\small$\qa_0=-k$};
\draw[dashed](0,0.87)--(0,1.8);
\draw[dashed](-1,0.7)--(-.5,.86);
\draw[dashed](0,0.7)--(-.5,.86);
\draw(-.5,.86)--(0.75,1.25);
\draw(0.75,1.25)--(0,1.5);
\draw[middlearrow={latex}](0,1.5)--(1.5,2);
\draw(1.5,2)--(-.5,2.7);
\draw[](-.5,2.7)--(3.52,4.04);
\draw[dashed](3.52,4.04)--(4.21,4.27);
\draw[dashed](3.52,4.04)--(2.83,4.27);
\end{tikzpicture}
\hspace{2cm}
\begin{tikzpicture}[scale=.8, 
 baseline=(current bounding box.center)]
\node(1)at(-.5,-1.5){};
\node(2)at(-.5,4.5){\small$\qa_1=-k$};
\draw(1)--(2);
\draw[dashed](1.75,2.25)--(0.5,1);
\draw[dashed](0,0.85)--(0,2);
\draw(0.75,1.25)--(0,1.5);
\draw(0.75,1.25)--(-.5,0.9);
\draw(-.5,0.9)--(0.78,0.45);
\draw[dashed](1.38,0.25)--(0.78,0.45);
\draw[dashed](.18,0.25)--(0.78,0.45);
\draw[middlearrow={latex}](0,1.5)--(1.5,2);
\draw(1.5,2)--(2.7,2.4)--(-.5,3.47);
\draw[dashed](-.5,3.47)--(-1.4,3.77);
\draw[dashed](-.5,3.47)--(.4,3.77);
\draw(-1,-1.3)--(3.6,3.25)node[right]{\small$\qa_0=k$};
\end{tikzpicture}\end{scaletikzpicturetowidth}
\captionof{figure}{An $\qa_1$-zigzag at $k$ on the left, an $\qa_0$-zigzag at $k$ on the right.}
\end{center}

 

We index partitions by finitely supported sequences of nonnegative integers $(m_k)_{k\ge1}$, the \emph{multiplicities}. The partition associated to such a sequence is denoted in its {exponential form} by $(k^{m_k})$, which means that $k$ appears $m_k$ times, thus giving a partition of $\sum_{k\ge1}m_kk$.

\begin{defi}\label{decodef} Let $\pi$ be an LS path and $k$ an integer. Denote by $m_{i,k}(\pi)$ the number of $\qa_i$-zigzags of $\pi$ at $k$. 
\begin{enumerate}[label=(\roman*)]
\item Assume that $\epsilon_0(\pi)=N>0=\epsilon_1(\pi)$. The \emph{left partition} $\bar\lambda(\pi)$ of $\pi$ is defined by the multiplicities $m_{1,k}(\pi)$, $k>0$. The \emph{right partition} is $\lambda(\pi)=N\cup\bar\lambda(\pi)$ if  $\pi$ reaches $\min\qa_0(\pi)=-N$ within an $\qa_1$-stable section at $N$. Otherwise $\lambda(\pi)=\bar\lambda(\pi)$.
\item Assume that $\epsilon_1(\pi)=N>0=\epsilon_0(\pi)$. The \emph{right partition} $\lambda(\pi)$ of $\pi$ is defined by the multiplicities $m_{0,k}(\pi)$, $k>0$. The \emph{left partition} is $\bar\lambda(\pi)=N\cup\lambda(\pi)$ if  $\pi$ reaches $\min\qa_1(\pi)=-N$ within an $\qa_0$-stable section at $N$. Otherwise $\bar \lambda(\pi)=\lambda(\pi)$.
\end{enumerate}
\end{defi}

\begin{rema} Such assumptions are typically (but not only) satisfied by $\delta$-top polytopes.\end{rema}

\begin{lemm}\label{lzz}
If $\lambda(\pi)=N\cup\bar\lambda(\pi)$ (resp. $\bar\lambda(\pi)=N\cup\lambda(\pi)$), then $\pi$ is in the image of $f_0^Nf_1^N$ (resp. $f_1^Nf_0^N$).
\end{lemm}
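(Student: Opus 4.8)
The plan is to analyze what the condition $\lambda(\pi)=N\cup\bar\lambda(\pi)$ tells us combinatorially about the partition of $\pi$ into sections (Proposition~\ref{partsec}), and then to decode how the operators $e_0,e_1$ act on this structure, using the section calculus developed in \S2 together with the dictionary between paths and polytopes from \S\ref{MVretract}. Concretely, by Definition~\ref{decodef}(i), the hypothesis $\lambda(\pi)=N\cup\bar\lambda(\pi)$ means we are in the case $\epsilon_0(\pi)=N>0=\epsilon_1(\pi)$ and moreover $\pi$ reaches its minimum value $\min\qa_0(\pi)=-N$ inside an $\qa_1$-stable section at level $N$. I want to show $\pi=f_0^Nf_1^N(\pi')$ for some LS path $\pi'$, equivalently that $e_1^N(\pi)$ is defined (nonzero) and then $e_0^N$ applied to it is defined, i.e.\ that $\varepsilon_1(\pi)\ge N$ — wait, that is false since $\epsilon_1(\pi)=0$. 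So the correct reading is: $f_1^N$ followed by $f_0^N$ must be surjective onto a neighbourhood of $\pi$ in the crystal, meaning $e_0^N e_1^N$ applied in the \emph{opposite} order recovers $\pi$; precisely, I must exhibit that $\varepsilon_0(\pi)\ge N$ (true, $=N$) so $e_0^N\pi$ makes sense, and then that $\varepsilon_1(e_0^N\pi)\ge N$, which by Lemma~\ref{lzzz} (with $i_1=1$) together with Proposition~\ref{toponly}/\ref{dtoponly} is exactly controlled by the zigzag count. So the real task is to translate ``$\pi$ reaches $-N$ inside an $\qa_1$-stable section at $N$'' into ``$\varepsilon_1(e_0^N\pi)\ge N$''.

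\textbf{First I would} set up the passage to polytopes: by \S\ref{MVretract} and Proposition~\ref{propBottomPol}, the LS path $\pi$ corresponds to $b\in\mathcal B(\infty)$ with MV polytope $P_b$, and the assumption $\epsilon_0(\pi)=N>0=\epsilon_1(\pi)$ together with the zigzag data identifying $\bar\lambda(\pi)$ and the relation $\lambda(\pi)=N\cup\bar\lambda(\pi)$ should be shown to force $P_b\in MV^{\delta-t}$ with left datum $\bar a_1=N$ (or the mirror situation). This is the technical heart: I need to verify that the $\qa_1$-stable sections of $\pi$ at each level $k$, counted with multiplicity $m_{1,k}(\pi)$, really do reproduce the left partition $\bar\lambda$ of $P_b$ as defined via the $\delta$-sequence in Definition~\ref{affmvdef}, and that the ``reaches $-N$ within an $\qa_1$-stable section'' condition is precisely the polytope condition distinguishing $\lambda=N\cup\bar\lambda$ from $\lambda=\bar\lambda$ in clause~(iii). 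Granting this identification, Proposition~\ref{dtoponly} gives $b=f_{i_1}^{k_1}\cdots f_{i_t}^{k_t}(b_0)$ with $i_1=1$, $i_{j+1}=i_j+1$, $k_1\ge\cdots\ge k_t$ and $k_1=N$; and then Lemma~\ref{lzzz} applied with $K=N=k_1$ yields $\varepsilon_0(e_1^N b)<N$... but I actually want information about $e_0$, so I would instead use the mirror of Lemma~\ref{lzzz} or argue directly that $b\in\mathrm{Im}\,f_0^N f_1^N$ iff in the normal form the first two exponents are $\ge$ the relevant values, which the chain decomposition provides.

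\textbf{The cleanest route}, which I would actually follow, is: reduce to $\delta$-top polytopes via Proposition~\ref{genpol} and \S4 as the section already suggests, then use Proposition~\ref{dtoponly} directly. From $\lambda(\pi)=N\cup\bar\lambda(\pi)$ we get $P_b\in MV^{\delta-t}$ with, in the notation of the proof of Proposition~\ref{dtoponly}, the situation $\bar\lambda\neq\lambda$ with $\lambda=N\cup\bar\lambda$; that proof shows $P_b=f_0^N(P')$ where $P'$ is the part above the bottom $\qa_1$-active diagonal, and $\epsilon_1(P')=N$, hence $P'=f_1^N(P'')$ as well. Composing, $P_b=f_0^N f_1^N(P'')$, i.e.\ $\pi$ is in the image of $f_0^N f_1^N$ — which is exactly the claim. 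The resp.\ statement is the mirror argument exchanging the roles of $\qa_0,\qa_1$ and using Definition~\ref{decodef}(ii).

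\textbf{The main obstacle} I anticipate is the bookkeeping in that first identification step: rigorously matching the geometric notion of ``$\qa_i$-zigzag at $k$'' and ``$\qa_1$-stable section reaching the minimum'' (purely path-theoretic data, via Proposition~\ref{partsec} and Definition~\ref{defzigzag}) with the algebraic decoration $\bar\lambda$ of $P_b$ read off the $\delta$-sequences in Definition~\ref{affmvdef}. One has to check that under the retraction correspondence of Theorem~\ref{retractdense}, each application of $f_1^{\max}$ then $f_0^{\max}$ (the steps building $\Upsilon_w(\pi)$) converts one $\qa_1$-stable-then-$\qa_0$-stable configuration — a zigzag — into one ``$\delta$-jump'' $\mu_{k}-\mu_{k-1}$ on the polytope side, so the multiplicity $m_{1,k}(\pi)$ equals the multiplicity of $k$ in $\bar\lambda$. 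This is essentially Lemma~\ref{calcreflection} and Lemma~\ref{retstsec} applied iteratively, but assembling it into a clean bijection between zigzags and parts of the partition is where the care is needed; once it is in place, Proposition~\ref{dtoponly} and the surjectivity of the relevant $f$-chains finish the lemma immediately.
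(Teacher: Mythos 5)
Your proposed proof is circular, and the circularity is in the step you yourself flag as the ``technical heart.'' You want to pass from the \emph{path-theoretic} hypothesis $\lambda(\pi)=N\cup\bar\lambda(\pi)$ (defined via zigzags and stable sections in Definition~\ref{decodef}) to the \emph{polytope-theoretic} statement $\lambda=N\cup\bar\lambda$ about $P_b$, and then apply the dichotomy in the proof of Proposition~\ref{dtoponly}. But the identification $\lambda(\pi)=\lambda$, $\bar\lambda(\pi)=\bar\lambda$ is precisely the content of Theorem~\ref{decopath}, which is proved \emph{after} and \emph{using} Lemma~\ref{lzz}: it is invoked, via Lemma~\ref{lzzz}, in the Corollary that precedes Theorem~\ref{decopath} and feeds into its induction. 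You cannot prove the lemma by quoting a downstream consequence of it. A secondary problem is that, even granting the identification, the hypothesis of Lemma~\ref{lzz} does not place $P_b$ in $MV^{\delta-t}$ — the definition of $\lambda(\pi),\bar\lambda(\pi)$ only requires the $\epsilon$-conditions of Definition~\ref{decodef}, and the remark after that definition explicitly warns that these are ``typically but not only'' satisfied by $\delta$-top polytopes. So the reduction ``$\Rightarrow P_b\in MV^{\delta-t}$'' fails on its face.

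The paper's proof bypasses the polytope dictionary entirely and works directly with the path data, in three lines. By Definition~\ref{decodef}(i), the hypothesis $\lambda(\pi)=N\cup\bar\lambda(\pi)$ means $\epsilon_0(\pi)=N>0=\epsilon_1(\pi)$ and, crucially, that $\pi$ attains $\min\qa_0(\pi)=-N$ \emph{during} an $\qa_1$-stable section at level $N$. Since $\epsilon_0(\pi)=N$, one may write $\pi=f_0^N(\pi')$ with $\pi'=e_0^N(\pi)$; the constraint that the $\qa_0$-minimum sits inside an $\qa_1$-stable section at $N$ forces the reflected path $\pi'$ to dip down to $\qa_1=-N$, so $\epsilon_1(\pi')\ge N$ and $\pi'$ lies in the image of $f_1^N$. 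Hence $\pi\in\operatorname{Im}(f_0^Nf_1^N)$. That is the whole argument — it is the same kind of local reflection bookkeeping as in Lemma~\ref{stsechere}, applied once, and it does not touch $MV^{\delta-t}$, $\ref{dtoponly}$, or $\ref{decopath}$. Your first paragraph in fact gropes towards exactly this (``$\epsilon_0(\pi)\ge N$ so $e_0^N\pi$ makes sense, and then \dots $\varepsilon_1(e_0^N\pi)\ge N$''), which is the correct plan; you should carry that out by a direct section/reflection calculation rather than retreating to the polytope side.
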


\begin{proof} Consider $\pi$ with an $\qa_1$-stable section at $N$, during which $\pi$ reaches $\qa_0=-N=\min\qa_0(\pi)$.  Then $\pi$ is equal to $f_{0}^N(\pi')$ for $\pi'$ reaching $\qa_1=-N$. Thus $\pi'$ is in the image of $f_1^N$.
\end{proof}

\begin{exem} Consider\[
\pi= f_{1}^{\lambda_1} f_{0}^{\lambda_1}\dots f_{1}^{\lambda_r} f_{0}^{\lambda_r}(\pi_{\Lambda})\]
for a partition $\lambda=(\lambda_1\ge\dots\ge\lambda_r)$, $\Lambda$ a large enough dominant integral weight, and $\pi_{\Lambda}$ the corresponding highest weight path.
We get\begin{align*}
\bar\lambda(\pi)&=\lambda\\
\lambda(\pi)&=\lambda\setminus\lambda_1.\end{align*}

Below the examples of $\lambda=(3,2,1)$ on the left and $\lambda=(3,2^2)$ on the right. Red zones correspond to $\qa_1$-stable sections, blue ones to $\qa_0$-stable sections. A red area succeeding a blue one hence gives an $\qa_0$-zigzag. Below each path figures the associated decorated polytope.

\begin{multicols}{2}
\[
\begin{tikzpicture}[scale = 1]
\draw[fill=blue!40]  (-4,2)--(-6,3)--(0,6) -- cycle;
\draw[fill=blue!40]  (-16/5,4/5)--(-4,1)--(-8/3,4/3)-- cycle;
\draw[fill=red!40]  (-8/3,4/3)--(-4,2)--(-4,1) -- cycle;
\draw[fill=blue!40]  (-16/9,2/9)--(-2,.25)--(-8/5,2/5) -- cycle;
\draw[fill=red!40]  (-2,.5)--(-2,.25)--(-8/5,2/5) -- cycle;
\draw(0,0)--(-6,0)--(-6,6)--(0,6)--(0,0)node[pos=0.5,sloped,below,rotate=180] {\tiny$\alpha_1=0$};
\draw(-4,0)--(-4,6)node[pos=0.135,sloped,above,rotate=0,red] {\tiny$\alpha_1=-2$};
\draw(-6,4)--(0,4);
\draw(-2,0)--(-2,6)node[pos=0.19,rotate=0,sloped,above,red] {\tiny$\alpha_1=-1$};
\draw(0,2)--(-6,2);
\draw(-4,0)--(0,4)node[pos=.135,sloped,below,rotate=0,blue] {\tiny$\alpha_0=2$};
\draw(-2,0)--(0,2)node[pos=0.4,sloped,below,rotate=0,blue] {\tiny$\alpha_0=1$};
\draw(-6,0)--(0,6)node[midway,sloped,below,rotate=0,blue] {\tiny$\alpha_0=3$};
\draw[thick](0,0)--(-2,.25)--(-8/5,2/5)--(-4,1)--(-8/3,4/3)--(-6,3)--(0,6)node[pos=0.5,sloped]{$\blacktriangleright$};
\draw(.,-.25) node{\tiny $\pi(0)=0$};
\draw(.,6.25) node{\tiny $\pi(1)=3\omega_0-6\delta$};
\end{tikzpicture}
\]
\[
\begin{tikzpicture}[yscale=0.2, xscale=0.6]
\draw [line width = 0.01cm, color=gray] (1,1)--(0,0)--(3,-3)--(0,-6)--(2,-8);
\draw [line width = 0.01cm, color=gray] (1,-9)--(1,1);
\draw [line width = 0.01cm, color=gray] (2,-8)--(2,0);
\draw [line width = 0.01cm, color=gray] (2,0)--(0,-2)--(3,-5)--(0,-8)--(1,-9);
\draw [line width = 0.01cm, color=gray] (3,-1)--(0,-4)--(3,-7);
\draw (3,-1) node {$\bullet$};
\draw (3,-3) node {$\bullet$};
\draw (3,-7) node {$\bullet$};
\draw (0,2) node {$\bullet$};
\draw (0,0) node {$\bullet$};
\draw(0,-4)node{$\bullet$};
\draw(0,-10)node{$\bullet$};
\draw  [line width = 0.03cm] (3,-1)--(0,2)--(0,-10)--(3,-7)--(3,-1);
\draw (-1.2,-2.5) node {\tiny {$(3,2,1)$}.$\delta$};
\draw (4,-2.5) node {\tiny {$(2,1)$}.$\delta$};
\end{tikzpicture}
\]

\columnbreak

\[
\begin{tikzpicture}[scale = 1]
\draw[fill=blue!40]  (-4,2)--(-6,3)--(0,6) -- cycle;
\draw[fill=blue!40]  (-16/5,4/5)--(-4,1)--(-8/3,4/3)-- cycle;
\draw[fill=blue!40]  (-32/9,4/9)--(-4,.5)--(-16/5,4/5) -- cycle;
\draw[fill=red!40]  (-4,.5)--(-16/5,4/5)--(-4,1) -- cycle;
\draw[fill=red!40]  (-8/3,4/3)--(-4,2)--(-4,1) -- cycle;
\draw(0,0)--(-6,0)--(-6,6)--(0,6)--(0,0)node[pos=0.5,sloped,below,rotate=180] {\tiny$\alpha_1=0$};
\draw(-4,0)--(-4,6)node[pos=0.135,sloped,above,rotate=0,red] {\tiny$\alpha_1=-2$};
\draw(-6,4)--(0,4);
\draw(-2,0)--(-2,6)node[pos=0.5,sloped,below,rotate=0] {\tiny$\alpha_1=-1$};
\draw(0,2)--(-6,2);
\draw(-4,0)--(0,4)node[pos=0.235,sloped,below,rotate=0,blue] {\tiny$\alpha_0=2$};
\draw(-2,0)--(0,2)node[pos=0.6,sloped,below,rotate=0] {\tiny$\alpha_0=1$};
\draw(-6,0)--(0,6)node[midway,sloped,below,rotate=0,blue] {\tiny$\alpha_0=3$};
\draw[thick](0,0)--(-4,.5)--(-16/5,4/5)--(-4,1)--(-8/3,4/3)--(-6,3)--(0,6)node[pos=0.5,sloped]{$\blacktriangleright$};
\draw(.,-.25) node{\tiny $\pi(0)=0$};
\draw(.,6.25) node{\tiny $\pi(1)=3\omega_0^\vee-7\delta$};
\end{tikzpicture}
\]
\[
\begin{tikzpicture}[yscale=0.2, xscale=0.6]
\draw [line width = 0.01cm, color=gray] (2,2)--(0,0)--(3,-3)--(0,-6)--(2,-8);
\draw [line width = 0.01cm, color=gray] (1,-9)--(1,3);
\draw [line width = 0.01cm, color=gray] (2,-8)--(2,2);
\draw [line width = 0.01cm, color=gray] (3,1)--(0,-2)--(3,-5)--(0,-8)--(1,-9);
\draw [line width = 0.01cm, color=gray] (1,3)--(0,2)--(3,-1)--(0,-4)--(3,-7);
\draw (3,1) node {$\bullet$};
\draw (3,-3) node {$\bullet$};
\draw (3,-7) node {$\bullet$};
\draw (0,4) node {$\bullet$};
\draw (0,0) node {$\bullet$};
\draw(0,-4)node{$\bullet$};
\draw(0,-10)node{$\bullet$};
\draw  [line width = 0.03cm] (3,1)--(0,4)--(0,-10)--(3,-7)--(3,1);
\draw (-1.2,-2) node {\tiny {$(3,2^2)$}.$\delta$};
\draw (4,-2) node {\tiny {$(2^2)$}.$\delta$};
\end{tikzpicture}
\]
\end{multicols}
\end{exem}

\begin{lemm}\label{stsechere}
Let $\pi$ be an LS path and consider integers $1\le p\le n$ such that $f_1^n(\pi)$ exists. We have the following.\begin{enumerate}[label=(\roman*)]
\item Assume that $\epsilon_0(\pi)=p$, so that $\min\alpha_0(\pi)=-p$. If $\pi$ reaches this minimum during an $\qa_1$-stable section at $p$, then $f_1^n(\pi)$ has an $\qa_0$-zigzag at $p$.
\item Assume that $\epsilon_0(\pi)=n$. Then $f_1^n(\pi)$ has an $\qa_0$-stable section at $n$, during which $\qa_1(f_1^n(\pi))$ reaches its minimum $-n$.
\item Assume that $\epsilon_0(\pi)=p$, and consider a positive integer $q$. Then  $\pi$ has an $\qa_1$-zigzag at $q$ if and only if $f_1^n(\pi)$ has an $\qa_0$-zigzag at $q$ (in which case $q\le p$).\end{enumerate}
\end{lemm}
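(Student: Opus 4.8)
The strategy is to analyze how the operator $f_1$ acts on the partition data of $\pi$ via its description in terms of Littelmann operators, using the combinatorial picture of sections developed in Section 2 (Propositions~\ref{partsec} and~\ref{propsec}) together with the translation between paths and polytopes established in~\ref{MVretract}. Rather than argue directly on paths, I would first reduce to the polytope side: by~\ref{MVretract} and~\ref{propBottomPol}, the data $\epsilon_0(\pi)$, $\epsilon_1(\pi)$ and the stable/zigzag sections of $\pi$ are visible on $P_b$ (with $b$ corresponding to $\pi$) through the lower Lusztig datum, so statements (i)--(iii) can be rephrased as statements about $P_{f_1^n(\mathbf a)}$ versus $P_{\mathbf a}$, where $f_1$ simply increments $a_1$. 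The key bridge lemma is that an $\qa_i$-zigzag at $k$ on the path corresponds exactly to a $\delta$-segment of length $k$ appearing in the relevant bottom edge of the polytope, which is precisely how multiplicities $m_{i,k}$ feed into the decorating partition in Definition~\ref{decodef}; I would isolate this correspondence as the crucial technical input and prove it by tracking the effect of $f_{i}^{\max}$ on a single stable section (which, by the remark after~\ref{propsec}, simply flips it).

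For part (ii), the cleanest route is: if $\epsilon_0(\pi)=n$, then $f_1^n(\pi)$ is the top of an $f_1$-string, so $\epsilon_1(f_1^n(\pi)) \ge n$ while the weight has moved by $n\qa_1$; examining $\Theta_1 = f_{\qa_1}^{\max}(\pi)$ and using that $f_1^{\max}$ turns every $\qa_1$-directed section into an $\qa_0$-directed one (Proposition~\ref{propsec} applied with $\alpha=\qa_1$), one sees that the portion of $f_1^n(\pi)$ realizing the drop of $\qa_1$ to $-n$ must occur inside a section where $\qa_0$ stays constant and equal to $n$ — i.e.\ an $\qa_0$-stable section at $n$. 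This is essentially Lemma~\ref{lzz} read in the other direction, and I would cite that lemma (with roles of $0,1$ swapped) to finish. For part (i), I would similarly apply $f_1^p$ first to produce the $\qa_0$-stable section at $p$ as in (ii), then apply the remaining $f_1^{n-p}$; since after the first $p$ steps $\epsilon_0$ has dropped strictly below $p$ (by the argument of Lemma~\ref{lzzz}, or by direct inspection of the polytope as in Proposition~\ref{toponly}), the further operators $f_1$ act on a piece lying strictly above, and the already-created $\qa_0$-stable section at $p$ gets combined with an $\qa_1$-stable section at $-p$ produced by the reflection $s_{\qa_1}$ inside $f_1^{\max}$, which is exactly the definition of an $\qa_0$-zigzag at $p$ (Definition~\ref{defzigzag}).

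Part (iii) is the heart of the lemma and is proved by induction on $n$ (or equivalently on the height of the weight). The point is that a single application of $f_1$ is encoded on the polytope, via~\cite[Proposition 4.10]{BDKT12} and~\ref{propBottomPol}, as an operation that acts on the bottom $\qa_1$- and $\qa_0$-edges by a controlled "reflection-and-reindexing" that shifts the $\delta$-indices by $\pm 2$ but preserves multiplicities; translated back to paths through the section dictionary, this says precisely that an $\qa_1$-zigzag at $q$ of $\pi$ is carried to an $\qa_0$-zigzag at the same $q$ of $f_1(\pi)$ and conversely, with the bound $q\le p$ coming from the fact that a zigzag at $q$ forces $\qa_0$ (resp.\ $\qa_1$) to dip to $-q$, hence $q\le\epsilon_0(\pi)=p$. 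The inequality $q \le p$ and the "no new zigzags are created or destroyed" claim are the two places I expect resistance: one must check that applying $f_1$ does not accidentally merge or split stable sections in a way that changes zigzag multiplicities, and for this the maximality clause in the definition of stable section (Proposition~\ref{partsec}) plus the constraint $\qa_i([s,v])\ge k$, $\qa_{i+1}([s,v])\ge -k$ in Definition~\ref{defzigzag} are exactly what rigidifies the picture. So the main obstacle is the bookkeeping in part (iii): verifying that the correspondence between zigzags and $\delta$-pieces of the polytope edges is a bijection compatible with $f_1$, which I would handle by reducing, via~\ref{genpol} and Proposition~\ref{dtoponly}, to the $\delta$-top case and then to an explicit check on the stack-of-rhombi picture used in the proof of Proposition~\ref{genpol}.
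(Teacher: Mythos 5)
The approach you propose does not match the paper's, and more importantly it is circular. The paper's proof of Lemma~\ref{stsechere} is a direct, local analysis on the path: it draws the cut--reflect mechanism of the Littelmann operator $f_1^n$ on the relevant piece of $\pi$, and the entire content reduces to the two numerical inequalities $2n-p\ge p$ (for the maximality clause in (i), and for (ii) with $p=n$) and $2n-p\ge q$ (for (iii)). Your plan instead hinges on a ``bridge lemma'' identifying $\qa_i$-zigzags at $k$ on the path with $\delta$-segments of length $k$ on the polytope edges, and then argues on the polytope side via \cite[Proposition 4.10]{BDKT12}. But that identification is precisely what Theorem~\ref{decopath} establishes, and the paper uses Lemma~\ref{stsechere} as the technical engine of the proof of~\ref{decopath}. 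Assuming the zigzag/$\delta$-segment dictionary in order to prove~\ref{stsechere} puts the cart before the horse: nothing in Section 4.2 or in Proposition~\ref{propBottomPol} gives you the \emph{decorations} of the polytope from retractions, only the undecorated bottom boundary. So the reduction you propose has no independent foundation at the point where this lemma sits in the paper.

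Two further specific steps would fail as written. First, for (ii) you invoke ``Lemma~\ref{lzz} read in the other direction'': but~\ref{lzz} asserts $\lambda(\pi)=N\cup\bar\lambda(\pi)\Rightarrow \pi\in\mathrm{im}(f_0^Nf_1^N)$, and its converse is neither stated nor obvious, so it cannot be cited to ``finish''. Second, for (i) you factor $f_1^n=f_1^{n-p}\circ f_1^p$ and claim that after the first $p$ steps $\epsilon_0$ has dropped strictly below $p$, citing the argument of Lemma~\ref{lzzz} or Proposition~\ref{toponly}; those results concern top polytopes and do not apply to a general LS path $\pi$ with $\epsilon_0(\pi)=p$, and the claim itself is not true in general (applying $f_1$ does not decrease $\epsilon_0$; for $\widehat{\mathfrak{sl}}_2$ it tends to increase it since $\qa_1(\qa_0^\vee)=-2$). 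What the paper actually does in (i) is establish the maximality of the resulting $\qa_0$-stable section by observing that a hypothetical later hit of $\qa_0=-p$, occurring after $\qa_1=n$ is reached, is pushed by $f_1^n$ to level $\qa_0=2n-p\ge p$, hence cannot extend the section. You would need a comparable local estimate; the induction-and-reindexing argument you sketch for (iii) does not supply one.
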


\begin{proof}
 For (i), $\pi$ locally looks like\begin{center}
\begin{tikzpicture}[scale=.8, 
 baseline=(current bounding box.center)]
\node(1)at(-.5,-1.5){};
\node(2)at(-.5,4.5){\small$\qa_1=p$};
\draw(1)--(2);
\draw(-2,.7)--(0,-1.3)node[right]{\small$\qa_1+\delta=p$};
\draw[dashed](1.75,2.25)--(0.5,1);
\draw[dashed](0,0.75)--(0,2);
\draw(-2,0.4)--(0.75,1.25);
\draw(0.75,1.25)--(0,1.5);
\draw[middlearrow={latex}](0,1.5)--(1.5,2);
\draw(1.5,2)--(2.7,2.4)--(-.5,3.4);
\draw[](-.5,3.4)--(1.9,4.2);
\draw(1.5,3.7)--(1.5,4.7)node[above]{\small$\qa_1=n$};
\draw(-1,-1.3)--(3.6,3.25)node[right]{\small$\qa_0=-p$};
\end{tikzpicture}
\hspace{0.5cm}$\overset{f_1^n}{\longrightarrow}$\hspace{0.5cm}
\begin{tikzpicture}[scale=.8, 
 baseline=(current bounding box.center)]
\node(1)at(-.5,-1.5){};
\node(2)at(-.5,4.5){\small$\qa_1=-p$};
\draw(1)--(2);
\draw[dashed](1.75,2.25)--(0.5,1);
\draw[dashed](0,0.85)--(0,2);
\draw(0.75,1.25)--(0,1.5);
\draw(0.75,1.25)--(-.5,0.9)--(1.5,0.3);
\draw[middlearrow={latex}](0,1.5)--(1.5,2);
\draw(1.5,2)--(2.7,2.4)--(-.5,3.47);
\draw[](-.5,3.47)--(-2.5,4.17);
\draw[](-2.5,4.17)--(-1.8,4.37);
\draw(-2.5,3.7)--(-2.5,4.7)node[above]{\small$\qa_1=-n$};
\draw(-1,-1.3)--(3.6,3.25)node[right]{\small$\qa_0=p$};
\end{tikzpicture}\end{center}
and we have an $\qa_0$-zigzag at $p$: indeed, it can not be included in a larger $\qa_0$-stable one because if $\pi$ hits $\qa_0=-p$ after $\qa_1=n$, then after applying $f_1^n$ this hit is at $\qa_0=2n-p\ge p$. The same picture (without $\qa_1$-stable sections) proves (ii), merging the two walls $\qa_1=p$ and $\qa_1=n$.
 For (iii), $\pi$ locally looks like\begin{center}
\begin{tikzpicture}[scale=.8, 
 baseline=(current bounding box.center)]
\node(1)at(-.5,-1.5){};
\node(2)at(-.5,4.5){\small$\qa_1=q$};
\draw(1)--(2);
\draw(-2,1.5)--(0,-.5)node[right]{\small$\qa_1+\delta=q$};
\draw[](3.75,4.25)--(-1,-0.5)node[left]{\small$\qa_0=-q$};
\draw[dashed](0,0.75)--(0,2);
\draw(-2,0.4)--(0.75,1.25);
\draw(0.75,1.25)--(0,1.5);
\draw[middlearrow={latex}](0,1.5)--(1.5,2);
\draw(1.5,2)--(-.5,2.7);
\draw[](-.5,2.7)--(4.81,4.47)--(4.51,4.57);
\draw(1.5,2.7)--(1.5,4.7)node[above]{\small$\qa_1=n$};
\draw(1,.7)--(5.1,4.75)node[above]{\small$\qa_0=-p$};
\end{tikzpicture}
\hspace{0.2cm}$\xymatrix{{}\ar@<0.5ex>[r]^-{f_1^n}&{}\ar@<0.5ex>[l]^-{e_1^n}}$\hspace{0.2cm}
\begin{tikzpicture}[scale=.8, 
 baseline=(current bounding box.center)]
\node(1)at(-.5,-1){\small$\qa_1=-q$};
\node(2)at(-.5,4.5){};
\draw(1)--(2);
\draw[](1.75,2.25)--(-1,-0.5)node[left]{\small$\qa_0=q$};
\draw[dashed](0,0.85)--(0,2);
\draw(0.75,1.25)--(0,1.5);
\draw[middlearrow={latex}](0,1.5)--(1.5,2);
\draw(0.75,1.25)--(-.5,0.9)--(1.5,0.3);
\draw(1.5,2)--(-.5,2.7);
\draw[](-.5,2.7)--(-2.5,3.4)--(.81,4.47)--(.51,4.57);
\draw(-2.5,2.7)--(-2.5,4.7)node[above]{\small$\qa_1=-n$};
\draw(0,3.7)--(1.1,4.75)node[above]{\small$\qa_0=2n-p$};
\end{tikzpicture}
\end{center}
where the dashed wall might be at $\qa_1=q$ ($-q$ after applying $f_1^n$). Since $2n-p\ge q$, we get what we need.
\end{proof}

\begin{coro}
Consider $b$ such that $P_b\in MV^t$.
Then the corresponding path $\pi$ satisfies $\bar\lambda(\pi)=\lambda(\pi)=0=\bar\lambda=\lambda$.
\end{coro}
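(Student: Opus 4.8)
The plan is to combine Proposition~\ref{toponly} with Lemmas~\ref{stsechere}, \ref{lzz} and~\ref{lzzz}. By Proposition~\ref{toponly} we may write $b=f_{i_1}^{k_1}\cdots f_{i_t}^{k_t}(b_0)$ with $i_{j+1}=i_j+1$ in $\Z/2\Z$ and $k_1>\cdots>k_t>0$; assume $i_1=1$, the case $i_1=0$ being symmetric, and set $N:=k_1$. The equalities $\lambda=\bar\lambda=0$ are part of the hypothesis that $P_b$ is a top polytope, so only $\bar\lambda(\pi)=\lambda(\pi)=0$ must be proved. Since $P_b$ has $a_1=N>0$ and $\bar a_1=0$, the path $\pi$ realizing $b$ satisfies $\epsilon_1(\pi)=N$ and $\epsilon_0(\pi)=0$ (the bottom edges of $P_b$ are read off from the Lusztig datum, exactly as in the proof of Proposition~\ref{toponly}); thus we are in case~(ii) of Definition~\ref{decodef}, where $\lambda(\pi)$ records the $\qa_0$-zigzags of $\pi$ and $\bar\lambda(\pi)=\lambda(\pi)$ unless $\pi$ attains $\min\qa_1(\pi)=-N$ inside an $\qa_0$-stable section at $N$, in which case $\bar\lambda(\pi)=N\cup\lambda(\pi)$.

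\emph{Step 1: $\pi$ has no $\qa_0$-zigzag, hence $\lambda(\pi)=0$.} I argue by induction on $t$. The cases $t\le 1$ are immediate: for $t=0$ the path is of highest weight and carries no zigzag, and for $t=1$ the path $f_1^N(\pi_{b_0})$ has, for $\Lambda$ sufficiently dominant, a non-decreasing $\qa_0$-coordinate, so it contains no $\qa_0$-stable section at all. For $t\ge2$, write $b=f_1^N(b')$ with $b'=f_0^{k_2}\cdots f_{i_t}^{k_t}(b_0)$; by Proposition~\ref{toponly}, $P_{b'}\in MV^t$ (now with $\bar a_1=k_2>0$ and $a_1=0$), so by the induction hypothesis the path $\pi'$ of $b'$ satisfies $\bar\lambda(\pi')=\lambda(\pi')=0$; in particular $\pi'$ has no $\qa_1$-zigzag. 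As $\epsilon_0(\pi')=k_2$ with $1\le k_2<N$, I apply Lemma~\ref{stsechere}(iii) with $n=N$: for each positive integer $q$, the path $\pi=f_1^N(\pi')$ has an $\qa_0$-zigzag at $q$ if and only if $\pi'$ has an $\qa_1$-zigzag at $q$. Therefore $\pi$ has no $\qa_0$-zigzag and $\lambda(\pi)=0$.

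\emph{Step 2: $\bar\lambda(\pi)=0$.} Suppose, for contradiction, that $\pi$ attains $\min\qa_1(\pi)=-N$ inside an $\qa_0$-stable section at $N$, i.e.\ $\bar\lambda(\pi)=N\cup\lambda(\pi)$. Then Lemma~\ref{lzz} shows that $\pi$, and hence $b$, lies in the image of $f_1^N f_0^N$; consequently $e_1^N(b)$ lies in the image of $f_0^N$, so $\epsilon_0(e_1^N(b))\ge N$. On the other hand, Lemma~\ref{lzzz}, applied with $i_1=1$ and $K=N=k_1$, gives $\epsilon_0(e_1^N(b))<N$, a contradiction. Hence $\bar\lambda(\pi)=\lambda(\pi)=0=\bar\lambda=\lambda$, as claimed.

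The main obstacle is Step~2: one must recognize that the presence of the extra part $N$ in the left partition is precisely the combinatorial shadow of $b\in\operatorname{im}(f_1^N f_0^N)$, which then collides with the sharp inequality of Lemma~\ref{lzzz}. A secondary point requiring care is the bookkeeping in Step~1 — checking that the truncation $b'=f_0^{k_2}\cdots f_{i_t}^{k_t}(b_0)$ still produces a top polytope, so that Lemma~\ref{stsechere}(iii) applies with the correct $p=\epsilon_0(\pi')$, and that the identification $\epsilon_0(\pi)=0$, $\epsilon_1(\pi)=N$ places us in case~(ii) of Definition~\ref{decodef}.
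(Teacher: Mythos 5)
Your proof is correct and follows the same overall strategy as the paper's, but it is organized a bit differently and is arguably cleaner. Like the paper, you argue by induction (the paper induces on weight, you on the number $t$ of blocks in the decomposition from Proposition~\ref{toponly}; these are equivalent here), and both arguments hinge on the same two ingredients: transferring zigzag data between $\pi$ and $e_1^{\max}(\pi)$, and the collision between Lemma~\ref{lzz} and Lemma~\ref{lzzz}. The paper, however, does not cite Lemma~\ref{stsechere}(iii) explicitly; it carries out an inline geometric argument about $(-\qa_0)$-directed sections to show that a hypothetical $\qa_0$-zigzag at $k$ in $\pi$ forces $e_1^k e_0^k(e_1^{\max}\pi)$ to exist, which then contradicts Lemma~\ref{lzzz}. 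You instead invoke Lemma~\ref{stsechere}(iii) directly to get the ``zigzag at $q$ iff zigzag at $q$'' equivalence, which streamlines the absence of $\qa_0$-zigzags into a one-line consequence of the induction hypothesis. Your Step~2 (ruling out $\bar\lambda(\pi)=N\cup\lambda(\pi)$ via $\pi\in\operatorname{im}(f_1^N f_0^N)$ and hence $\epsilon_0(e_1^N b)\ge N$, against the strict bound in Lemma~\ref{lzzz}) is exactly the paper's mechanism behind ``by~\ref{lzz} and~\ref{lzzz} we have $\lambda(\pi)=\bar\lambda(\pi)$,'' just spelled out. The only point worth double-checking, which you do flag, is that the hypotheses of Lemma~\ref{stsechere}(iii) hold with $p=\epsilon_0(\pi')=k_2$ and $n=k_1$, i.e.\ $1\le k_2<k_1$; this is guaranteed by the strict inequalities in Proposition~\ref{toponly} once $t\ge 2$, and you treat $t\le 1$ separately. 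All in all: same skeleton, with a tidier appeal to Lemma~\ref{stsechere}(iii) for the zigzag step.
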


\begin{proof}
We use induction on the weight. Assume that $\lambda(\pi)$ or $\bar\lambda(\pi)$ is nonzero. Then by~\ref{lzz} and~\ref{lzzz} we have $\lambda(\pi)=\bar\lambda(\pi)$. Assume then that $\pi$ has an $\qa_0$-zigzag at $k>0$ (the case of an $\qa_1$-zigzag is dealt with symmetrically). By induction hypothesis, if $k_1=\epsilon_1(\pi)$, $e_1^{k_1}(\pi)=e_1^{\max}(\pi)$ does not have any $\qa_1$-zigzag at $k$. It implies that all $(-\qa_0)$-directed sections in $e_1^{\max}(\pi)$ occur before the $\qa_1$-stable section at $k$ induced by the $\qa_1$-stable section at $-k$ in the $\qa_0$-zigzag of $\pi$. On the right hand side here:
\begin{center}
\begin{tikzpicture}[scale=.8, 
 baseline=(current bounding box.center)]
\node(1)at(-.5,-1.5){};
\node(2)at(-.5,4.5){\small$\qa_1=-k$};
\draw(1)--(2);
\draw[dashed](1.75,2.25)--(0.5,1);
\draw[dashed](0,0.85)--(0,2);
\draw(0.75,1.25)--(0,1.5);
\draw(0.75,1.25)--(-.5,0.9)--(1.5,0.3);
\draw[middlearrow={latex}](0,1.5)--(1.5,2);
\draw(1.5,2)--(2.7,2.4)--(-.5,3.47);
\draw[](-.5,3.47)--(-2.5,4.17);
\draw[](-2.5,4.17)--(-1.8,4.37);
\draw(-2.5,3.7)--(-2.5,4.7)node[above]{\small$\qa_1=-k_1$};
\draw(-1,-1.3)--(3.6,3.25)node[right]{\small$\qa_0=k$};
\end{tikzpicture}
\hspace{0.5cm}$\overset{e_1^{k_1}}{\longrightarrow}$\hspace{0.5cm}
\begin{tikzpicture}[scale=.8, 
 baseline=(current bounding box.center)]
\node(1)at(-.5,-1.5){};
\node(2)at(-.5,4.5){\small$\qa_1=k$};
\draw(1)--(2);
\draw[dashed](1.75,2.25)--(0.5,1);
\draw[dashed](0,0.75)--(0,2);
\draw(-1,0.7)--(0.75,1.25);
\draw(0.75,1.25)--(0,1.5);
\draw[middlearrow={latex}](0,1.5)--(1.5,2);
\draw(1.5,2)--(2.7,2.4)--(-.5,3.4);
\draw[](-.5,3.4)--(1.9,4.2);
\draw(1.5,3.7)--(1.5,4.7)node[above]{\small$\qa_1=k_1$};
\draw(-1,-1.3)--(3.6,3.25)node[right]{\small$\qa_0=-k$};
\end{tikzpicture}
\end{center}
the wall $\qa_0=-k$ can not be reached after the $\qa_1=k_1$ one, hence $\min\qa_0(e_1^{\max}(\pi))\le-k$ is reached before or during the $\qa_1$-stable section at $k$.
But then $e_0^ke_1^{\max}(\pi)$ reaches $\qa_1=-k$ and $e_1^ke_0^k(e_1^{\max}(\pi))$ exists, which contradicts~\ref{lzzz} applied to $e_1^{\max}(\pi)$.
\end{proof}

\begin{theo}\label{decopath}
Consider $b$ such that $P_b\in MV^{\delta-t}$, and $\pi$ the associated path. Then $\bar\lambda=\bar\lambda(\pi)$ and $\lambda=\lambda(\pi)$. Precisely:\begin{enumerate}[label=(\roman*)]
\item if $a_1\neq0$, then $k\in\bar\lambda\cap\lambda$ if and only if $\pi$ has an $\qa_0$-zigzag at $k$;
\item if $\bar a_1\neq0$, then $k\in\bar\lambda\cap\lambda$ if and only if $\pi$ has an $\qa_1$-zigzag at $k$;
\item $n=\bar\lambda\setminus\lambda$ if and only if $-n=\min\qa_1(\pi)$ is reached during an $\qa_0$-stable section at $n$;
\item $n=\lambda\setminus\bar\lambda$ if and only if $-n=\min\qa_0(\pi)$ is reached during an $\qa_1$-stable section at $n$.
\end{enumerate}
\end{theo}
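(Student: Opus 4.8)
The plan is to induct on the weight of $b$. By Proposition~\ref{dtoponly} we may write $b=f_{i_1}^{k_1}\cdots f_{i_t}^{k_t}(b_0)$ with $i_{j+1}=i_j+1\in\Z/2\Z$ and $k_1\ge\cdots\ge k_t>0$; the diagram automorphism exchanging $\qa_0$ and $\qa_1$ swaps the two families of assertions, so it suffices to treat $i_1=1$, in which case $a_1\ne0$. If the $k_j$ are \emph{strictly} decreasing then $P_b\in MV^t$ and the statement is the Corollary preceding the theorem; in particular this settles $t\le1$. In general set $b'=f_0^{k_2}\cdots f_{i_t}^{k_t}(b_0)$, so that $b=f_1^{k_1}(b')$ and, writing $\pi'$ for the path of $b'$, $\pi=f_1^{k_1}(\pi')$. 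By Proposition~\ref{dtoponly} again $P_{b'}\in MV^{\delta-t}$, with first index $0$; I would carry along in the induction the extra information that $\epsilon_1(\pi')=0$ and $\epsilon_0(\pi')=k_2$, so that $\pi'$ falls into case~(i) of Definition~\ref{decodef} with $N=k_2$. This last point is precisely where $k_1\ge k_2$ is used: applying $f_1^{k_1}$ with $k_1\ge k_2=\epsilon_0(\pi')$ exhausts the $\qa_0$-raising and yields $\epsilon_0(\pi)=0$, $\epsilon_1(\pi)=k_1$, so that $\pi$ falls into case~(ii) with $N=k_1$. The induction hypothesis gives $\bar\lambda(\pi')=\bar\lambda'$, $\lambda(\pi')=\lambda'$ (the decorations of $P_{b'}$), in the refined form~(i)--(iv).

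For the inductive step I would feed $\pi'$ into Lemma~\ref{stsechere}, which applies since $p:=\epsilon_0(\pi')=k_2\le k_1=:n$ and $f_1^{k_1}(\pi')$ exists. Part~(iii) matches, level by level, the $\qa_1$-zigzags of $\pi'$ with the $\qa_0$-zigzags of $\pi$, so that $m_{0,q}(\pi)=m_{1,q}(\pi')$ for every $q\ne k_2$; parts~(i) and~(ii) describe the level $q=k_2$. Concretely, if $k_1>k_2$ and $\pi'$ attains $\min\qa_0(\pi')=-k_2$ inside an $\qa_1$-stable section at $k_2$ — the subcase of Definition~\ref{decodef}(i) in which $\lambda'=k_2\cup\bar\lambda'$ — then part~(i) furnishes one new $\qa_0$-zigzag of $\pi$ at level $k_2$, while if $k_1=k_2$ then part~(ii) furnishes instead an $\qa_0$-\emph{stable} section of $\pi$ at $k_1$ through which $\qa_1(\pi)$ reaches its minimum $-k_1$, which is exactly the clause of Definition~\ref{decodef}(ii) that puts $\bar\lambda(\pi)=k_1\cup\lambda(\pi)$; Lemma~\ref{lzz} is used to check that this extra part lands on the $\bar\lambda$-side, as it must when $a_1\ne0$. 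Collecting cases, $(\bar\lambda(\pi),\lambda(\pi))$ is determined by $(\bar\lambda',\lambda')$, $k_1$ and $k_2$, and assertions~(i)--(iv) for $\pi$ drop out of this dictionary together with Definition~\ref{decodef}.

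It then remains to verify that the \emph{decoration} $(\bar\lambda,\lambda)$ of $P_b$ obeys the very same recursion. For this I would argue exactly as in the proofs of Propositions~\ref{toponly} and~\ref{dtoponly}: normalize $P_{b'}$ via \cite[Proposition~4.10]{BDKT12} so that its bottom part is the single diagonal $k_2\qa_1$, compute $f_1^{k_1}(P_{b'})$ from the pictures~(\ref{polN})--(\ref{polN'}), and pin down the outcome by the uniqueness of the MV polytope with prescribed right datum, \cite[Theorem~3.11]{BDKT12}. One finds: when $k_1=k_2$ a part $k_1$ moves to the $\bar\lambda$-side; when $k_1>k_2$ and the bottom $\qa_1$-active diagonal of $P_{b'}$ sits at level $k_2$ a fresh part $k_2$ appears; otherwise $(\bar\lambda,\lambda)$ is unchanged — matching the previous paragraph term by term. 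I expect the main obstacle to be this bookkeeping of subcases, and within it the single delicate point: establishing, on the path side, that when $k_1=k_2$ the $\qa_1$-stable section of $\pi'$ at $k_2$ genuinely becomes part of an $\qa_0$-\emph{stable} section of $\pi$ (not merely of an $\qa_0$-zigzag), and that the extra part attaches to the correct partition. This is exactly where one needs the fine geometric content of Lemma~\ref{stsechere} rather than mere crystal combinatorics.
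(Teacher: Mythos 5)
Your proposal follows essentially the same route as the paper's proof: induction on the weight, the decomposition $\pi=f_1^{k_1}(\pi')$ with $\pi'=e_1^{\max}(\pi)$, and the three parts of Lemma~\ref{stsechere} to match $\qa_1$-zigzags/stable sections of $\pi'$ with $\qa_0$-ones of $\pi$. Your three cases ($k_1=k_2$; $k_1>k_2$ with $\lambda'\supsetneq\bar\lambda'$; $k_1>k_2$ with $\lambda'=\bar\lambda'$) are exactly the paper's case split ($\bar\lambda=n\cup\lambda$; $\lambda=p\cup\bar\lambda'$; $\bar\lambda'=\lambda'=\lambda$), the paper invoking, respectively, IH(ii)+\ref{stsechere}(ii,iii), IH(ii,iv)+\ref{stsechere}(i,iii), and IH(ii)+\ref{stsechere}(iii). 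The one place where you are more explicit than the paper is the final bookkeeping showing that the actual decorations $(\bar\lambda,\lambda)$ of $P_b$ obey the same recursion; the paper treats this as read from the structure already established in the proofs of Propositions~\ref{toponly} and~\ref{dtoponly} and from~\cite[Theorem~3.11]{BDKT12}, which is what you propose to cite as well, so there is no substantive divergence.
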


\begin{proof} We proceed by induction on the weight using~\ref{stsechere}. Assume $a_1=n\neq0$ so that we may write $\pi=f^n_1(\pi')$, and $P'=P_{\pi'}$. The first case is $\bar\lambda=n\cup\lambda$. Then $\bar\lambda'=\lambda'=\lambda$, $\bar a'_1=n$ and $a'_1=0$. Then the induction hypothesis (ii) and~\ref{stsechere}(ii,iii) prove (i,iii) here. Then we consider the second case where $\bar\lambda=\lambda$. There are two subcases. If $\bar\lambda'=\lambda'=\lambda$, we prove (i) thanks to the induction hypothesis (ii) and~\ref{stsechere}(iii). Otherwise $\lambda=p\cup\bar\lambda'$, where $p\le n$ thanks to~\ref{dtoponly}, $\lambda'=\lambda$ and $\bar a'_1=p$. The induction hypothesis (ii) allow us to use~\ref{stsechere}(iii) to prove that $\bar\lambda'$ and $\bar\lambda(\pi)$ differ by at most one element. The induction hypothesis (iv) together with~\ref{stsechere}(i) (applicable since $p\le n$) proves that $p\in\bar\lambda(\pi)$. Thus $\bar\lambda=\bar\lambda(\pi)$. We have proved (i) and (iii) assuming $a_1\neq0$. Similarly, we prove (ii) and (iv) assuming $\bar a_1\neq0$ using  the analog of~\ref{stsechere} where the roles of $\qa_0$ and $\qa_1$ are exchanged.
\end{proof}

\bigskip



\begin{thebibliography}{999}




\bibitem[A03]{MR1958098}
Jared E. {\sc Anderson}, { A polytope calculus for semisimple groups}, {\it Duke
  Math. J.}, {\bf116} (2003), 567--588.

\bibitem[BP96]{BP96}\label{BP96}
 Nicole {\sc  Bardy-Panse},
 {\it Syst\`emes de racines infinis}, M\'emoire Soc. Math. France (N.S.)
 {\bf65} (1996).
 
 \bibitem[BDKT12]{BDKT12}\label{BDKT12}
Pierre {\sc Baumann}, Thomas {\sc Dunlap}, Joel {\sc Kamnitzer} \& Peter {\sc Tingley},
Rank 2 affine MV polytopes, {\it Represent. Theory}, {\bf17} (2013), 442--468.

\bibitem[BGK12]{BGK12}\label{BGK12}
Pierre {\sc Baumann}, St\'ephane {\sc Gaussent} \& Joel {\sc Kamnitzer},
R\'eflexions dans un cristal, {\it C. R. Math. Acad. Sci. Paris}, {\bf 350} (2012), no. 23-24, 999--1002. 

\bibitem[BK12]{MR2892443}
Pierre {\sc Baumann}, Joel {\sc Kamnitzer},
 {Preprojective algebras and {MV} polytopes}, {\it Represent. Theory}, {\bf16} (2012), 152--188.



\bibitem[BKT14]{BKT14}\label{BKT14}
Pierre {\sc Baumann}, Joel {\sc Kamnitzer} \& Peter {\sc Tingley},
Affine Mirkovic-Vilonen polytopes, \textit{Publ. Math. Inst. Hautes \'Etudes Sci.}, {\bf 120} (2014), 113--205. 



\bibitem[BT72]{BT72}\label{BT72}
  Fran\c cois {\sc Bruhat} \& Jacques {\sc Tits},
Groupes r\'eductifs sur un corps local I, Donn\'ees radicielles
 valu\'ees, {\it Publ. Math. Inst. Hautes \'Etudes Sci.}, {\bf41} (1972), 5--251.

\cache{
 \bibitem[Ca79]  {Ca79}
  Pierre {\sc Cartier},
Representations of $\g p-$adic groups: a survey, in {\it Automorphic forms, representations and L-functions, Corvallis 1977},  {\it Proc. Symp. Pure Math.} {\bf33} (1979), 111--155.
}



\bibitem[E10]{E10}\label{E10}
Michael {\sc Ehrig},
MV-polytopes via affine buildings, {\it Duke Math. J.}, {\bf 155}(3) (2010), 433--482. 

\bibitem[GR08] {GR08}\label {GR08}
St\'ephane {\sc Gaussent} \& Guy {\sc Rousseau},
Kac-Moody groups, hovels and Littelmann paths, {\it Annales Inst. Fourier}, {\bf
58} (2008), 2605--2657.

\bibitem[GR14] {GR14}\label{GR14}
St\'ephane {\sc Gaussent} \& Guy {\sc Rousseau},
Spherical Hecke algebras for Kac-Moody groups over local fields, {\it Annals of Math.}, {\bf180} (2014), 1051--1087.
 
 
\bibitem[He21] {He21} \label {He21}
{ Auguste {\sc H\'ebert},
Convexity in a masure II, hal-03108175.}
 


\bibitem[Ka10]{MR2630039}
Joel {\sc Kamnitzer}, { Mirkovi\'{c}-{V}ilonen cycles and polytopes}, {\it Ann. of
  Math. (2)}, {\bf171} (2010), 245--294.

\bibitem[Ka94]{Ka94}\label{Ka94}
Masaki {\sc Kashiwara}, {\em Crystal bases of modified quantized enveloping algebra}, {\it Duke Math. J.} {\bf 73} (1994), 383--413.


\bibitem[MP89] {MP89} \label{MP89}
Robert {\sc Moody} \& Arturo {\sc Pianzola},
On infinite root systems, {\it Trans. Amer. Math. Soc.},
{\bf 315} (1989), 661--696.

\bibitem[MP95] {MP95} \label{MP95}
 Robert {\sc Moody} \& Arturo {\sc Pianzola},
{\it Lie algebras with triangular decompositions},
 (Wiley-Interscience, New York, 1995). 
 


\bibitem[MT14] {MT14} \label{MT14}
 Dinakar {\sc Muthiah} \& Peter {\sc Tingley},
 Affine PBW bases and MV polytopes in rank 2, \textit{Selecta Math. (N.S.)}, {\bf 20}(1) (2014), 237--260. 

 \bibitem[MT18]{MR3874704}
 Dinakar {\sc Muthiah} \& Peter {\sc Tingley}, Affine {PBW} bases
  and affine {MV} polytopes, {\it Selecta Math. (N.S.)}, {\bf24} (2018), 4781--4810.

 

\bibitem[R11] {R11} \label{R11}
 Guy {\sc Rousseau},
Masures affines, {\it Pure Appl. Math. Quarterly}, {\bf7} (n$^o$ 3 in honor of J. Tits) (2011), 859--921.

\bibitem[R16] {R16} \label{R16}
 Guy {\sc Rousseau},
Groupes de Kac-Moody d\'eploy\'es sur un corps local, 2 Masures ordonn\'ees, {\it Bull. Soc. Math. France} {\bf 144} (2016), 613-692.


\bibitem[R17] {R13} \label{R13}
 Guy {\sc Rousseau},
Almost split Kac-Moody groups over ultrametric fields, {\it Groups, Geometry and Dynamics.},  {\bf 11} (2017), no. 3, 891--975.

\bibitem[TW16]{MR3542489}
Peter {\sc Tingley}, Ben {\sc Webster}, { Mirkovi\'c-{V}ilonen polytopes and
  {K}hovanov-{L}auda-{R}ouquier algebras}, {\it Compos. Math.}, {\bf152} (2016),
  1648--1696.







\end{thebibliography}
\end{document}